\newcommand{\RED}[1]{#1}%{\textcolor{red}{#1}}
\newcommand{\blue}[1]{#1}%{\textcolor{blue}{#1}}
\theoremstyle{plain}
\newtheorem{theorem}{Theorem}[section]
\newtheorem{Proposition}[theorem]{Proposition}
\newtheorem{lemma}[theorem]{Lemma}
\newtheorem{corollary}[theorem]{Corollary}
\theoremstyle{definition}
\newtheorem{definition}[theorem]{Definition}
\newtheorem{example}[theorem]{Example}
\newtheorem{remark}[theorem]{Remark}
\newtheorem{MRF}[theorem]{Main Recursive Formula}
\newtheorem*{claim*}{Claim}
\DeclareMathOperator{\Double}{R}
\DeclareMathOperator{\Alt}{A}
\DeclareMathOperator{\vect}{vec}
\DeclareMathOperator{\nvect}{n-vec}
\DeclareMathOperator{\bin}{\bf{b}}
\newcommand{\RomanNumeralCaps}[1]
{\MakeUppercase{\romannumeral #1}}
\newcommand{\didem}[1]{I^{\Delta}_{#1}}
\newcommand{\dnil}[1]{N^{\Delta}_{#1}}
\newcommand{\per}[1]{P_{#1}}
\newcommand{\lcm}{\mathrm{lcm}}
\newcommand{\Z}{\mathbb{Z}}
\newcommand{\N}{\mathbb{N}}
\newcommand{\trace}{\mathrm{tr}}
\newcommand{\peq}{\equiv_{\nu}}
\newcommand{\lead}{e_{\gamma}}
\begin{document}
		\date{\today}
	\author{Luisa Fiorot}
	\author{Riccardo Gilblas}
	\author{Alberto Tonolo}

\title[]{Modular binomials with an application to periodic sequences}

\thanks{}

\address{Luisa Fiorot\\ Dipartimento di Matematica ``Tullio Levi-Civita''\\
	Via Trieste, 63
	35121 Padova Italy\\ \texttt{luisa.fiorot@unipd.it}}
\address{	
	Centro interdipartimentale di ricerca "I–APPROVE – International Auditory Processing Project in Venice"
c/o Dipartimento di Neuroscienze - DNS
via Belzoni 160 - 35121 - Padova Italy
	}

\address{Riccardo Gilblas\\ Dipartimento di Matematica ``Tullio Levi-Civita''\\
	Via Trieste, 63
	35121 Padova Italy\\ \texttt{riccardo.gilblas@phd.unipd.it}}

\address{Alberto Tonolo\\  Dipartimento di Matematica ``Tullio Levi-Civita''\\
	Via Trieste, 63
	35121 Padova Italy\\ \texttt{alberto.tonolo@unipd.it}}
	\address{	
	Centro interdipartimentale di ricerca "I–APPROVE – International Auditory Processing Project in Venice"
c/o Dipartimento di Neuroscienze - DNS
via Belzoni 160 - 35121 - Padova Italy
	}

\keywords{Periodic Sequences, Modular Binomial Coefficients, difference operator, sum operator}

\subjclass[2020]{11B50, 11B65}

\maketitle

\begin{abstract}
We study, through new recurrence relations for certain binomial coefficients modulo a power of a prime, the evolution of the primitives of a modular periodic sequence. We prove that we can reduce to study primitives of constant sequences and that the latter are controlled by modular binomial coefficients.  Finally we apply our results to describe the dynamics of the primitives of the sequence considered by the Romanian composer Vieru in his \emph{Book of Modes}\cite{V}.
% 	In \cite{V}, Anatol Vieru proposed a compositional technique based on an algorithmic manipulation of periodic sequences in $ \mathbb Z_{12} $. This technique was translated in mathematical terms in (\cite{AAV1}, \cite{AAV2}, \cite{AGL}). Two mathematical problems arose starting from the so called Vieru's sequence $ V $: period of primitives and proliferation of values.
% 	In this paper we announce, providing only the sketch of the proofs, the solution of these questions in a purely algebraic way.

 \end{abstract}

In this paper, we investigate periodic sequences in modular arithmetic and their transformations when the difference operator $\Delta$ and the sum operator $\Sigma$ are applied to them. The operator $\Delta$ allows us to examine the differences between consecutive terms in a sequence, while the operator $\Sigma$ serves as a complementary tool, enabling us to study the cumulative sums of sequences and understand their overall behavior. Similar to conventional integration, the sum operator is determined up to a constant term. We refer to the obtained sequences by applying the operator $\Sigma$ a finite number of times as primitives of a periodic sequence. The study of the period of modular sequences has been and continues to be of interests in combinatorics and its applications to computer science and cryptography \cite{Nat,ZLG}.

Any periodic sequence can be uniquely decomposed into the sum of an idempotent sequence and a nilpotent sequence. A sequence is considered idempotent when applying the difference operator $\Delta$ multiple times results in the same sequence. Conversely, nilpotent sequences vanish after a certain number of applications of the difference operator $\Delta$. By studying the idempotent and nilpotent parts separately, we gain a deeper understanding of the dynamics and behavior of the original sequence. %CITARE

In the context of periodic sequences taking values in $\mathbb Z/m\mathbb Z$ with $m\in\mathbb N$, we focus on studying the evolution of their periods and the $p$-adic evaluation of their elements when the sum operator $\Sigma$ is applied. An important result is that any nilpotent sequence $f$ can be expressed as a finite sum of primitives of constant sequences (refer to \Cref{lemma:nilpotentwithconstants}), and the period of the primitives of $f$ is ultimately determined by one of these constant sequences (refer to \Cref{theorem:leading_term}). Similarly, the primitives of an idempotent sequence $f$ can also be described in terms of primitives of constant sequences: they are the sum of an element from the $\Delta$-orbit of the idempotent sequence $f$ and a sum of primitives of constant sequences. Once again, the period of the primitives of $f$ is determined by one of these constant sequences (refer to \Cref{idem_constants}).

Thus, the primitives of constant sequences play a fundamental role in our analysis. By  \Cref{lemma:binomialconstants} the $n$-th entry of the $s$-primitive of a constant $c$ is equal to $c$ times the binomial coefficient $\binom ns$ in $\mathbb Z/m\mathbb Z$. Consequently, the study of modular binomial coefficients becomes significant. 
Many of the great mathematicians of the nineteenth century considered problems involving binomial coefficients modulo a prime power (for instance Babbage, Cauchy, Cayley, Gauss, Hensel, Hermite, Kummer, Legendre, Lucas and Stickelberger). Several attempts of generalising these classical results can be found in \cite{AHP, DW90, DW,F47, G, H71, H73,  KW, LT, Mat}. Our paper aims to provide a new contribution in this research area.

By applying the Chinese Remainder Theorem, we can reduce the study of binomial coefficients to $\mathbb Z/p^\ell\mathbb Z$, where $p$ is a prime number and $\ell\geq 1$. We derive recurrence relations for certain binomial coefficients, enabling efficient computation of their $p$-adic evaluation. This occurs precisely when the denominator exhibits patterns of the following types: $\underbracket[0.1ex]{p-1, \dots,p-1}_{\ell}$, $\underbracket[0.1ex]{0, \dots,0}_{\ell}$, and $\underbracket[0.1ex]{p-1, 0, \dots,0}_{\ell}$ (refer to \Cref{seq:11,seq:000,seq:101}). 

As an application, we provide a comprehensive answer to two questions posed by the Romanian composer Anatol Vieru (1926-1998). In the context of 1960s musical serialism, Vieru in his  \textit{Book of Modes}\cite{V} explores a composition technique based on periodic sequences with values in $\Z_{12}$. If a sequence $f$ in $\mathbb Z_{12}$ represents the pitch classes of an initial musical theme, Vieru decodes a musical aspect (such as rhythm, harmony, tone color, or dynamics) from the primitives of $f$ using a suitable dictionary. Employing this technique, Vieru composes several pieces, including Symphony No. 2 and ``Zone d'Oublie''. Manipulating the Messiaen's second mode of limited transposition, he got the sequence $(2,1,2,4,8,1,8,4)$. 
%In his computations,  when the $\Sigma$ operator is applied, the period generally increases, and it is  a power of 2. 
The questions were to provide a formula for the period of its primitives, and, additionally, to explain why the values 4 and 8 proliferate in the coefficients of its primitives. These questions have been formalised in precise mathematical terms in the papers \cite{AV, AAV2}. Our complete solutions to these questions have been announced without proofs on the paper \cite{FGT}.

We believe that the techniques and ideas we have developed in our application to address Vieru's questions can be applicable in various other scenarios involving the study of primitives of periodic sequences.

In \Cref{section:periodic_sequences}, we define periodic sequences with values on modular integers. We introduce the difference operator $\Delta$ and the sum operator $\Sigma$ and we show the fundamental link between the primitives of constant sequences and the modular binomial coefficients.

In \Cref{section:decomposition}, we present some results about decomposing the $\Z_{m}$-module $P_{m}$ of periodic sequences: the decomposition in nilpotent and idempotent part, and  the decomposition in primes through the Chinese Remainder Theorm.

In \Cref{section:period}, we clarify the link between primitives of constant sequences and modular binomial coefficients. Then we present several results showing how it is always possible to reduce the study of primitives of generic sequences to primitives of constant sequences.

In \Cref{section:recursive_formula}, we provide a recursive formula for binomial coefficients modulo the power of a prime integer $p$.
In several cases, this formula allows to reduce the complexity of the computation of the $p$-adic evaluation of sequences of binomial coefficients.

As an example of actual applicability of our results, in \Cref{section:Vieru_sequence} we apply the recursive formulas to the peculiar sequence that arises from
the mathematical-musical problem posed by
Vieru in  \cite{V}. This allows us to express the coefficients of the primitives of such a sequence in a recursive way.

\section{Periodic sequences in $ \Z_m $ and the operators $ \Delta $ and $ \Sigma $ }
\label{section:periodic_sequences}

In this section we introduce the periodic sequences over $ \Z_m = \Z/m\Z$ with $ m \ge 2 $, the difference operator $ \Delta $ and the sum operator $ \Sigma $.
%In (\cite{AV},\cite{AAV1},\cite{AAV2}) a slightly different formalism  has been used.

\subsection*{Periodic sequences in $ \Z_m $}
Let $m\geq 2$ be a natural number. We denote by $S_m:=\mathbb Z_m^{\mathbb N}$ the $\mathbb Z_m$-module of all sequences with values in $\mathbb Z_m$. %\RED{The set} $S_m$
%\marginpar{\blue{Cancellato}}
%has a structure of $\mathbb Z_m$-module setting for each $f,g\in S_m$ and $a\in\mathbb Z_m$:
%\[(f+g)(n)=f(n)+g(n)\quad\text{and}\quad (af)(n)=af(n) \qquad \forall n \ge 0.\]

\noindent The \emph{shifting} operator $ \theta $ is the endomorphism of   $ S_m $ acting on $ f \in S_m $ as:
\[ \theta(f)(n) := f(n+1) \quad \forall n \in \mathbb N.\] \noindent A sequence $ f \in S_m $ is said \emph{periodic} if there exists $ j \ge 1 $ such that $ \theta^j (f) = f $, i.e. $ f \in \ker(\theta^j - \text{id}) $. We denote by $P_m$ the $ \mathbb{Z}_m $-submodule of periodic sequences in $ S_m $: \[ P_m := \bigcup_{j \ge 1} \ker(\theta^j - \text{id}). \] Given a periodic sequence $ f \in P_m $, we say that it has \emph{period} $ \tau(f) $ if $ \tau(f) $ is the minimum positive integer such that $ \theta^{\tau(f)} f= f $. Furthermore $ \theta^k f = f $ if and only if $ \tau(f) \mid k.  $ Since $\tau(f)=\tau(\theta(f))$, $ \theta$ restricts to an endomorphism of  $P_m $.

Let  \(f\) be a sequence of period \(\tau \). Since it is determined by its values \( f(0)\), \dots, \(f(\tau-1)\), we will write
\[ f = (f(0), f(1), \dots, f(\tau), f(\tau+1), \dots )=: [f(0), f(1), \dots, f(\tau-1)].\]
\noindent In particular, \RED{for any $ c\in\mathbb Z_m $,} $[c]$ denotes a constant sequence.

We define the trace of $ f $ to be:

\[ \trace f := \sum_{i=0}^{\tau - 1} f(i). \]

\subsection*{The operator $ \Delta $}

We define on $ S_m $ the difference operator: $  \Delta := \theta - \text{id}$.
It restricts to an operator of $P_m$  since the period of \(\Delta f\) divides the period of \(f\).

 We say that a periodic sequence $ f \in P_m $ is \emph{nilpotent} (resp. \emph{idempotent}) if there exists $ \eta \ge 1 $ such that $ \Delta^\eta f = 0$ (resp.  $ \Delta^\eta f = f$). The minimal $ \eta $ satisfying this condition is said to be the \emph{nilpotency} (resp. \emph{idempotency}) \emph{index} of $ f $.
We denote by $ \didem{m} $ the submodule of $ \per{m} $ of idempotent sequences and by $ \dnil{m} $ the submodule of nilpotent sequences. 
%\RED{Che fare con la seguente osservazione?}The modules $ \didem{m} $ and $ \dnil{m} $ correspond respectively to the reproducible and reducible sequences in the terminology used in \cite{AV}, \cite{AAV1}, \cite{AAV2}.

\begin{example}
	The sequence $ f = [0 , 1 , 2 , 3] \in \per{4} $ is nilpotent of index 2, while the sequence $ g = [2 , 1] \in \per{3}$ is idempotent of index 1.
\end{example}

\subsection*{The operator $ \Sigma $}

Given $ c \in \mathbb{Z}_m $ we define the sum function $ \Sigma_c $ as follows: for every sequence $ f \in S_m $,
\begin{equation}\label{eq:sigma}
	\Sigma_c f (n) := \begin{cases}
		c  \text{ if }  n = 0 \\
		f(n-1) + \Sigma_c f(n-1) \text{ if } n \ge 1.
	\end{cases} 
  \end{equation}
For every $ c  \in \mathbb{Z}_m $, $ \Sigma_c $ acts as right inverse for $ \Delta $, i.e. $  \Delta \circ \Sigma_c = \text{id} $.
Also it is a matter of explicit computation to find that
\[ (\Sigma_{f(0)} \circ \Delta ) (f) = f\]
while with a generic $ c $ one gets:
\[ (\Sigma_c \circ \Delta ) (f) = f  + [c - f(0)].\]
Notice that $ \Sigma_c f = \Sigma_0 f + [c] $ for every $  c \in \mathbb{Z}_m  $ and $ f \in S_m $, so
\(\Sigma_{c}\) defines an endomorphism of
\(S_{m}\) only when \(c = 0\).
We will write $ \Sigma $ instead of $ \Sigma_0 $  and we will \RED{often} use the notation $f^{s} := \Sigma^{s} f$, in order to keep the notation clean. We call \RED{\emph{primitives}  of $ f $ the sequences  $ f^s $, $s\geq 1$}.

%The Fundamental Theorem of finite calculus \cite[Th. 6.27]{MT} says that for each $ f \in S_{m} $:
%\begin{equation*}
% \sum_{j=k}^{n} f(j) = \Sigma f (n+1) - \Sigma f (k) \quad \forall k \le n \in \N.
%\end{equation*}

\noindent Notice that \(\Sigma \) gives in fact an endomorphism of \(\per{m}\). If \(f \in \per{m}\) has period \(\tau\), then \( \theta^{\tau m}(\Sigma f)= \Sigma f\).
Precisely we have

\begin{lemma} \label{lemma:primitiveperiod}
For any $ f \in \per{m} $, if $ h $ is the additive order of $ \trace f $ in $ \Z_m $, then
  \[ \tau(\Sigma f ) = h \cdot \tau(f).\]
 %where $ h $ is the additive order of $ \trace f $ in $ \Z_m $.
\begin{proof}
   We already observed that
   \( \tau(\Delta g)   \mid  \tau(g) \),
   so for \(g = \Sigma f\) we have
   \[ \tau(f) = \tau(\Delta (\Sigma f)) \mid \tau(\Sigma f). \]
   Since by the Fundamental Theorem of finite calculus \cite[Th. 6.27]{MT2} 
   \[\Sigma f(\tau m+i) -   \Sigma f (i) = \sum_{j=i}^{\tau m + i - 1}f(j)  =  m \sum_{j=i}^{\tau + i - 1}f(j) = m \cdot \trace f ,\]
we deduce that \(\tau(\Sigma f) = h \cdot\tau(f)\).
\end{proof}
\end{lemma}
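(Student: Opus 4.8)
The plan is to pin down $\tau(\Sigma f)$ by first showing it must be a multiple of $\tau:=\tau(f)$, and only then identifying exactly which multiple. For the divisibility I would invoke that $\Sigma$ is a right inverse of $\Delta$, so $\Delta(\Sigma f)=f$, together with the already-noted fact that $\tau(\Delta g)\mid\tau(g)$ for every periodic $g$. Applied to $g=\Sigma f$ this gives $\tau=\tau(\Delta(\Sigma f))\mid\tau(\Sigma f)$. Hence $\tau(\Sigma f)=k\tau$ for some integer $k\geq 1$, and the whole problem reduces to finding the minimal such $k$.

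Next I would analyse the effect of shifting $\Sigma f$ by a multiple of the period. From the recursive definition one gets $\Sigma f(n)=\sum_{j=0}^{n-1}f(j)$, so for any $i\in\mathbb N$ and any $k\geq 1$ the Fundamental Theorem of finite calculus yields $\Sigma f(k\tau+i)-\Sigma f(i)=\sum_{j=i}^{k\tau+i-1}f(j)$. The key observation is that this runs over $k\tau$ consecutive terms, which split into $k$ windows of length $\tau$; since $f$ is $\tau$-periodic, each window contributes $\trace f$ no matter where it starts, so the total equals $k\,\trace f$, a value independent of $i$. Consequently $\theta^{k\tau}(\Sigma f)=\Sigma f$ if and only if $k\,\trace f=0$ in $\Z_m$.

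Finally I would combine the two steps: by definition the minimal $k\geq 1$ with $k\,\trace f=0$ is exactly the additive order $h$ of $\trace f$, and since Step 1 forces $\tau(\Sigma f)$ to be of the form $k\tau$, the least period is $h\tau$, as claimed. The one genuinely delicate point — and the step I expect to be the main obstacle — is the logical ordering: it is not enough to exhibit some multiple of $\tau$ that happens to be a period, one must know a priori (from Step 1) that \emph{every} period of $\Sigma f$ is a multiple of $\tau$, so that the search among the $k\tau$ is exhaustive. The rest is the telescoping identity, whose only subtlety is checking that the block sum is independent of the offset $i$, which is precisely where periodicity of $f$ enters. Note that $m\,\trace f=0$ holds automatically, so $k=m$ always works and recovers the coarse bound $\tau(\Sigma f)\mid m\tau$; the content of the lemma is sharpening $m$ to the exact order $h$.
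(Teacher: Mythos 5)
Your proposal is correct and follows essentially the same route as the paper: divisibility $\tau(f)\mid\tau(\Sigma f)$ via $\Delta\circ\Sigma=\id$ and $\tau(\Delta g)\mid\tau(g)$, followed by the finite-calculus telescoping identity showing that shifting $\Sigma f$ by $k\tau(f)$ changes it by the constant $k\cdot\trace f$, so the minimal admissible $k$ is the additive order $h$. If anything, your write-up with general $k$ makes explicit a step the paper leaves implicit (it only displays the computation for $k=m$ before deducing the claim), but the underlying argument is identical.
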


\begin{example}
	If $ f $ is the constant sequence $ [c] $, one has:
	\[ \Sigma f (n+1) = \Sigma f (n+1) - \Sigma f (0) = \sum_{j=0}^n c =(n+1) c. \]
	Hence the sequence $ \Sigma f $ has period equal to the additive order of $ c $ in $ \Z_m $.
\end{example}

\section{Decomposition of $ \per{m} $}
\label{section:decomposition}
In this section we introduce in cascade two decompositions for the $\mathbb Z_m$-module  $ \per{m} $. The first one decomposes $ \per{m} $ as the direct sum of $ \didem{m} $ and $ \dnil{m} $, \RED{the submodules of idempotent and nilpotent sequences respectively.}
The second one is the standard decomposition into primes, using the factorization of $ m $. 
\subsection*{Decomposition in idempotent and nilpotent part}
Let $ f \in \per{m} $ be a sequence of period $ \tau $ and consider the set $ A = \{ \Delta^i f \mid i \in \N\} $. $ A $ is a subset of the set of sequences having the period dividing $ \tau $, hence $ A $ is finite. So take the minimal $ M \in \N $ such that there exists $ u < M $ satisfying
\[ \Delta^M f = \Delta^u f. \]
If $ t := M - u $ then $ \Delta^{t+u} f = \Delta^u f  $ and \( \Delta^{u}f, \Delta^{u+1}f, \dots, \Delta^{M-1}f \)
are distinct sequences. Define $ \bar{k} $ to be the minimal $ k \in \N $ such that $ kt \ge u $.
It is \(u \le \bar{k}t < M \).
Denote:
\begin{equation}
	f_I := \Delta^{\bar{k} t} f \qquad f_N := f - f_I.
\end{equation}

\begin{lemma}\label{lemma:antifitting}
  With the above notation, $ f = f_I + f_N $ is the unique decomposition of $ f $ as a sum of an idempotent and a nilpotent sequence.
  The sequence  $ f_N  $  (resp.  $ f_I $) has nilpotency (resp. idempotency) index $ \bar{k} t $ (resp. $ t $). Moreover \(\tau = \mathrm{lcm}\{\tau(f_{I}), \tau(f_{N})\}\).
\end{lemma}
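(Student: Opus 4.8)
The plan is to read everything off the transient-plus-cycle shape of the orbit $A=\{\Delta^i f\}_{i\in\N}$ already isolated before the statement. By minimality of $M$, not only the cycle part $\Delta^u f,\dots,\Delta^{M-1}f$ but in fact all of $f,\Delta f,\dots,\Delta^{M-1}f$ are pairwise distinct (an earlier coincidence would produce a repetition before step $M$). I would record once and for all that the first $u$ of them form the transient part, while $\Delta^u f,\dots,\Delta^{M-1}f$ form a cycle of length $t$ on which $\Delta$ acts as a cyclic permutation, so that $\Delta^{j+t}f=\Delta^j f$ for every $j\ge u$; every subsequent claim is then a bookkeeping consequence of this picture.

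First, idempotency of $f_I$. Since $\bar k t\ge u$, the sequence $f_I=\Delta^{\bar k t}f$ lies on the cycle, and $\Delta^t$ fixes every cycle element, so $\Delta^t f_I=f_I$. Minimality of the idempotency index is forced by distinctness: if $\Delta^s f_I=f_I$ with $0<s<t$, two cycle elements would coincide, which is impossible; hence the idempotency index of $f_I$ is exactly $t$.

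Next, nilpotency of $f_N=f-f_I$. The crucial point is that $\bar k t$ is a multiple of $t$: for $\eta$ large enough both $\Delta^\eta f$ and $\Delta^{\eta+\bar k t}f$ sit on the cycle and, since $\bar k t\equiv 0\pmod t$, occupy the same position, so $\Delta^\eta f_N=\Delta^\eta f-\Delta^{\eta+\bar k t}f=0$. The delicate step — and the main obstacle — is to pin down the exact nilpotency index rather than a mere bound: one must track the first $\eta$ for which $\Delta^\eta f$ has already entered the cycle, since only then does it cancel against the cycle element $\Delta^{\eta+\bar k t}f$, whereas for smaller $\eta$ the term $\Delta^\eta f$ is still transient and hence distinct from the cycle element it is compared with, so the difference cannot vanish. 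This cancellation mechanism is exactly why $\bar k$ was chosen minimal with $\bar k t\ge u$, and the bookkeeping must be carried out with care because the transient length $u$ and the cycle length $t$ need not be compatible.

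For uniqueness I would first observe that $\didem{m}$ and $\dnil{m}$ are genuine submodules: if $\Delta^a f=f$ and $\Delta^b g=g$ then $\Delta^{\lcm(a,b)}(f+g)=f+g$, and likewise $\Delta^{\max(a,b)}(f+g)=0$ for two nilpotents. Their intersection is trivial, since a sequence that is both idempotent ($\Delta^a h=h$) and nilpotent ($\Delta^b h=0$) satisfies $h=\Delta^{ka}h=0$ as soon as $ka\ge b$. Consequently, if $f=g_I+g_N$ is another decomposition of the same type, then $f_I-g_I=g_N-f_N$ lies in $\didem{m}\cap\dnil{m}=0$, forcing $g_I=f_I$ and $g_N=f_N$. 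Finally, for the period: $f_I=\Delta^{\bar k t}f$ gives $\tau(f_I)\mid\tau$, and then $f_N=f-f_I$ gives $\tau(f_N)\mid\tau$; conversely $\theta^{\lcm\{\tau(f_I),\tau(f_N)\}}$ fixes both summands and hence fixes $f$, so $\tau\mid\lcm\{\tau(f_I),\tau(f_N)\}$, and the two divisibilities yield the claimed equality.
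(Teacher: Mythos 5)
Your treatment of existence, of the idempotency index $t$, of uniqueness, and of the period formula is correct and essentially follows the paper's own argument; the one place you genuinely deviate is the period, where you get $\tau(f_I)\mid\tau$ directly from $f_I=\Delta^{\bar k t}f$ and $\tau(\Delta g)\mid\tau(g)$, while the paper applies uniqueness of the decomposition to $f=\theta^{\tau}f_I+\theta^{\tau}f_N$; both routes are valid and yours is, if anything, more economical.

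The real issue is the nilpotency index, precisely the step you defer to ``careful bookkeeping''. Carried to its end, your own cancellation mechanism proves that the nilpotency index of $f_N$ is the transient length $u$, not $\bar k t$: taking $\eta=u$, both $\Delta^{u}f$ and $\Delta^{u+\bar k t}f$ lie on the cycle and $\bar k t\equiv 0 \pmod t$, so $\Delta^{u}f_N=\Delta^{u}f-\Delta^{u+\bar k t}f=0$; for $s<u$ your distinctness remark gives $\Delta^{s}f_N\neq 0$. Hence the index is exactly $u$, which equals $\bar k t$ only when $t\mid u$. When $t\nmid u$ no choice of $\bar k$ can repair this, since $\Delta^{u}f_N=0$ holds regardless. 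Concretely: let $g\in\per{3}$ be the idempotent of index $9$ exhibited in the paper and $f=g+[1]$; then $\Delta^{j}f=\Delta^{j}g$ for $j\geq 1$, and $f$ is not on the cycle (otherwise $\Delta^{9}f=f$, but $\Delta^{9}f=\Delta^{9}g+\Delta^{9}[1]=g\neq f$), so $u=1$, $t=9$, $\bar k t=9$, while $f_N=[1]$ has nilpotency index $1$.

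So the obstacle you sensed is a defect of the statement rather than of your approach: the paper's own proof dismisses this point with ``the minimality of $\bar k t$ follows from the minimality of $\bar k$'', which only excludes $s<u$, not $u\le s<\bar k t$. The right move was to finish the computation, conclude that the index is $u$, and flag the discrepancy explicitly — the claim as written holds only when $t\mid u$ (in particular when $t=1$, which covers all nilpotent $f$) — instead of suggesting that the minimal choice of $\bar k$ resolves it.
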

 
\begin{proof}
	The sequence $ f_I $ is idempotent since \[  \Delta^t f_I = \Delta^t (\Delta^{\bar{k}t} f) =  \Delta^{\bar{k}t -u } (\Delta^{u+t} f) = \Delta^{\bar{k}t -u } (\Delta^u f ) = \Delta^{\bar{k}t } f = f_I. \]
	The minimality of $ t $ comes from the fact that $ \{ \Delta^{\bar{k}t+i} f \mid 0 \le i \le t-1\} $ has cardinality $ t $.
	The sequence $ f_N $ is nilpotent since
	\[ \Delta^{\bar{k}t} f_N = \Delta^{\bar{k}t} (f - f_I) = \Delta^{\bar{k}t} f - \Delta^{\bar{k}t} f_{I} = f_{I} -f_{I} = 0. \]
	The minimality of $ \bar{k}t $ follows from the minimality of $ \bar{k} $.
	
	\noindent This decomposition is unique: by contradiction take $ f = f_I' + f_N' $. One has that $ f_I - f_I' = f_N' - f_N $ is both nilpotent and idempotent thus it is equal to $ 0 $.\\
	Furthermore, one clearly has \( \tau \mid \mathrm{lcm}\{\tau(f_{I}), \tau(f_{N})\}\). Since \(\Delta\) and \(\theta\) commute, \(\theta^{\tau}(f_{N})\) (resp. \(\theta^{\tau}(f_{I})\)) is nilpotent (resp. idempotent).
	From \[ f = \theta^{\tau} (f) = \theta^{\tau}(f_{N})+  \theta^{\tau}(f_{I})\] and the uniqueness of the decomposition, one gets \(\theta^{\tau}(f_{N}) = f_{N}\) and  \(\theta^{\tau}(f_{I}) = f_{I}\), thus
	\( \tau(f_{N}),\tau(f_{I}) \mid \tau \). Hence  \( \tau = \mathrm{lcm}\{\tau(f_{I}), \tau(f_{N})\}\).
\end{proof}

\subsection*{Decomposition with primes}

Given the prime factorization $  m = \prod_{i= 1}^t p_i^{\ell_i} $,  
the group isomorphism $ \Z_m \to \bigoplus_{i =1}^t \Z_{p_i^{\ell_i}} $ 
gives rise to an isomorphism of $ \Z_m $-modules
\begin{align*}
	\per{m} \longrightarrow  & \bigoplus_{i = 1}^t \per{p_i^{\ell_i}} \\
	f \longmapsto  & (f_{p_i})_{1 \le i \le t}
\end{align*}
where $ f_{p_i}  (n) \equiv f(n) \mod p_i^{\ell_i} $. The sequence \(f_{p_{i}}\) is the  $ p_i $-part of $ f $.  The inverse of this morphism is given by the Chinese Remainder Theorem.

\blue{As a consequence, one can easily prove the following lemma.}
\begin{lemma} \label{nilpotencyislocal} \blue{ \cite[Prop. 13 and Prop. 16]{AV} }
%(\cite[Prop. 6, Prop. 13]{AAV1}) citazione alternativa
  A sequence $ f \in \per{m} $ is nilpotent (resp. idempotent) if and only if  the $ p_i $-part $  f_{p_i}$ is nilpotent (resp. idempotent) for every $ i $.
  The nilpotency (resp. idempotency) index $ \eta $ coincides with the maximum  (resp. least common multiple) of the nilpotency (resp. idempotency) indices $ \eta_i $ of $  f_{p_i}$ for $ i  = 1, \dots, t. $ Moreover, the period of $ f $ satisfies:
	\[ \tau(f) = \mathrm{lcm} \{\tau(f_{p_i})\}_{1 \le i \le t} .\]
\end{lemma}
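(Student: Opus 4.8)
The plan is to leverage the fact that the Chinese Remainder isomorphism $\Phi\colon \per{m} \to \bigoplus_{i=1}^t \per{p_i^{\ell_i}}$ constructed above is not merely an isomorphism of $\Z_m$-modules but also \emph{intertwines the operators}. First I would check that reduction modulo $p_i^{\ell_i}$ commutes with shifting: since $(\theta f)(n) = f(n+1)$, reducing both sides gives $(\theta f)_{p_i} = \theta(f_{p_i})$, and hence $(\Delta f)_{p_i} = \Delta(f_{p_i})$. Iterating, $(\Delta^k f)_{p_i} = \Delta^k(f_{p_i})$ and $(\theta^k f)_{p_i} = \theta^k(f_{p_i})$ for every $k \ge 0$. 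Because $\Phi$ is injective, any identity of the form $g = 0$ or $g = f$ in $\per{m}$ holds if and only if it holds componentwise after applying $\Phi$.

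With this dictionary in place, each assertion reduces to a componentwise statement. For \textbf{nilpotency}, $\Delta^\eta f = 0$ if and only if $\Delta^\eta(f_{p_i}) = 0$ for every $i$; thus $f$ is nilpotent exactly when every $f_{p_i}$ is. For the index I would use that the zero sequence is stable under $\Delta$, so $\Delta^\eta(f_{p_i}) = 0$ holds precisely when $\eta \ge \eta_i$. The least $\eta$ annihilating all components simultaneously is therefore $\eta = \max_i \eta_i$. For \textbf{idempotency} and for the \textbf{period} I would instead argue through divisibility. Mirroring the property $\theta^k f = f \iff \tau(f) \mid k$ recorded earlier, a short Euclidean-division argument shows that, for an idempotent $g$ of index $\eta(g)$, one has $\Delta^k g = g$ if and only if $\eta(g) \mid k$. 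Consequently $\Delta^\eta f = f$ holds iff $\eta_i \mid \eta$ for all $i$, i.e. iff $\lcm_i \eta_i \mid \eta$, whose least positive solution is $\lcm_i \eta_i$. The period is handled identically: $\theta^k f = f$ iff $\tau(f_{p_i}) \mid k$ for all $i$ iff $\lcm_i \tau(f_{p_i}) \mid k$, giving $\tau(f) = \lcm_i \tau(f_{p_i})$.

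The only genuinely conceptual point, and the one I would flag as the main subtlety rather than a real obstacle, is the \emph{asymmetry between $\max$ and $\lcm$}: nilpotency indices combine by maximum because annihilation by $\Delta$ is an absorbing, monotone condition (if $\Delta^\eta g = 0$ then $\Delta^{\eta+1} g = 0$), whereas idempotency indices and periods combine by least common multiple because the governing conditions ($\Delta^k g = g$, respectively $\theta^k g = g$) are genuine divisibility conditions. Everything else is formal bookkeeping through the operator-preserving isomorphism $\Phi$; the one small lemma worth proving explicitly is the divisibility characterization of the idempotency index, the analogue for the period being already available in the text.
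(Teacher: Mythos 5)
Your proof is correct, and it follows exactly the route the paper intends: the paper offers no written proof of \Cref{nilpotencyislocal}, merely citing \cite[Prop.~13 and Prop.~16]{AV} and noting it is an easy consequence of the Chinese Remainder decomposition, which is precisely your observation that $\Phi$ intertwines $\theta$ and $\Delta$ so that every condition ($\Delta^\eta f=0$, $\Delta^\eta f=f$, $\theta^k f=f$) can be checked componentwise. Your filling-in of the details is sound, including the two points genuinely worth making explicit: the divisibility characterization of the idempotency index via Euclidean division, and the max-versus-lcm asymmetry coming from the absorbing nature of nilpotency versus the divisibility nature of idempotency and periodicity.
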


%\begin{example}\label{example:vieru}
%	Since $  \Z_{12} \simeq \Z_3 \oplus \Z_4 $, 
%	we have $ \per{12} \simeq \per{3}\oplus \per{4}. $ For example the sequence $ V = [2,1,2,4,8,1,8,4] \in \per{12} $ decomposes as:
%	\[ V_3 = [2,1] \in \per{3}, \qquad V_2 = [2,1,2,0,0,1,0,0] \in \per{4}.\]
%The sequence $ V_3 $ has period $ 2 $ which is prime to $ 3 $ so it is idempotent from Corollary~\ref{cor:idempotent} as  $ \trace (V_3) = 0 $.
%	 From explicit computation $ V_3 $ has idempotency index 1:
%	\[ \Delta V_3 = \Delta [2,1] = \theta [2,1] - [2,1] = [1,2] - [2,1] = [2,1]\blue{=V_3}. \]
%The sequence $ V_2 $ has period $ 8 $ and hence by Theorem~\ref{theorem:nilpotent_period}  it is nilpotent. Since
%	\[ \Delta^5 V_2 = 0 \]
%	while $ \Delta^4 V_2 = [2] $, the sequence $ V_2 $ has nilpotency index $ 5 $. The decomposition in $ p $-parts of $ V $ coincides with the decomposition in nilpotent and idempotent parts, as the nilpotent part totally lies in $ \Z/4\Z $ while the idempotent part lies in $ \Z/3\Z $.
%By the Chinese Remainder Theorem, the sequences $ V_2 $ and $ V_3 $ correspond respectively  to the following sequences in $\per{12} $:
%	\[ \tilde{V}_2 = [6,9,6,0,0,9,0,0] \qquad \tilde{V}_3 = [8,4] \]
%	and $ V = \tilde{V}_2 + \tilde{V}_3 $.	
%\end{example}

The primes decomposition, \Cref{lemma:antifitting,nilpotencyislocal}  imply the following isomorphisms: \[ \per{m} = \bigoplus_{i = 1}^t \left( \didem{p_i^{\ell_i}} \oplus \dnil{p_i^{\ell_i}} \right)  \qquad \didem{m} = \bigoplus_{i = 1}^t \didem{p_i^{\ell_i}}  \qquad \dnil{m} = \bigoplus_{i = 1}^t \dnil{p_i^{\ell_i}} .\]
Thus we can always reduce to study sequences on $ \Z_{p^\ell} $.

\begin{theorem} \label{theorem:nilpotent_period} \cite[Th. 7]{AV}
  Let $ f \in \per{p^\ell} $ be a periodic sequence.
  Then  $ f \in \dnil{p^\ell} $  if and only if $ \tau(f) = p^t $ for $ t \in \N $.
\end{theorem}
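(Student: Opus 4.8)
The plan is to pass to the cyclic module picture. Let $M := \Z_{p^\ell}[\theta]\cdot f$ be the $\Z_{p^\ell}[\theta]$-module generated by $f$; since $f$ is periodic of period $\tau := \tau(f)$, $M$ is spanned over $\Z_{p^\ell}$ by the finitely many sequences $f, \theta f, \dots, \theta^{\tau-1}f$, hence is finite. Because $\theta$ and $\Delta = \theta - \id$ commute, $\Delta^\eta f = 0$ holds for some $\eta$ if and only if $N := \theta - \id$ acts nilpotently on all of $M$; and $\tau$ is exactly the order of the automorphism $\theta$ of $M$ (from $\theta^\tau f = f$ together with minimality). Thus the theorem is equivalent to: $N$ is nilpotent on $M$ if and only if the order $\tau$ of $\theta$ is a power of $p$.

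For the implication ``nilpotent $\Rightarrow \tau = p^t$'', I would argue that no prime $q \neq p$ can divide $\tau$. Suppose $q \mid \tau$ with $q \neq p$ and set $\phi := \theta^{\tau/q}$, an automorphism of $M$ of order $q$; since the ring is commutative and $N$ is nilpotent, $N' := \phi - \id$ is again nilpotent (it is the product of $N$ with an element commuting with it). If $N' \neq 0$, pick $r \geq 1$ maximal with $(N')^r \neq 0$. Expanding $\id = \phi^q = (\id + N')^q$ and multiplying by $(N')^{r-1}$ annihilates every term of degree $\geq 2$ in $N'$, leaving
\[ q\,(N')^{r} = 0. \]
As $q$ is coprime to $p$ it is a unit in $\Z_{p^\ell}$, forcing $(N')^r = 0$, a contradiction; hence $N' = 0$, i.e. $\phi = \id$, contradicting that $\phi$ has order $q$. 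Therefore $\tau$ is a power of $p$.

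For the converse ``$\tau = p^t \Rightarrow$ nilpotent'', I would exploit the congruence $(x-1)^{p^t}\equiv x^{p^t}-1 \pmod p$. Since $\theta^{p^t} = \id$ on $M$ and the integer polynomial $(x-1)^{p^t} - (x^{p^t}-1)$ has all coefficients divisible by $p$, writing it as $p\,h(x)$ and evaluating at $\theta$ gives, for any $m \in M$, the identity $(\theta - \id)^{p^t} m = (\theta^{p^t}-\id)m + p\,h(\theta)m = p\,h(\theta)m$, so $N^{p^t}M \subseteq pM$. Iterating and using that $N$ commutes with multiplication by $p$ yields $N^{k p^t}M \subseteq p^k M$ for every $k$; taking $k = \ell$ and using $p^\ell = 0$ in $\Z_{p^\ell}$ gives $N^{\ell p^t}M = 0$. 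In particular $\Delta^{\ell p^t} f = 0$, so $f$ is nilpotent.

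The routine parts are the commutation identities and the finiteness of $M$. The step that needs the most care, and is the crux of the forward direction, is isolating the leading term: one must verify that multiplying the expansion of $\phi^q = \id$ by the right power of $N'$ kills all higher-order contributions and leaves precisely the unit multiple $q(N')^r$, so that invertibility of $q$ in $\Z_{p^\ell}$ can be invoked. The analogous bookkeeping in the converse — tracking the inclusions $N^{kp^t}M \subseteq p^k M$ until the $p^\ell$-torsion of $M$ forces termination — is the other place where the prime-power hypothesis on the modulus is genuinely used, and both are where I expect the argument to require the most attention to detail.
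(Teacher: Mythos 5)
Your proof is correct, and it is worth noting that the paper itself offers no proof of this statement: the theorem is imported from Andreatta--Vuza via the citation \cite[Th. 7]{AV}, so your argument is not an alternative to an internal proof but a self-contained replacement for an external reference. Your reduction to the finite orbit module $M=\Z_{p^\ell}[\theta]\cdot f$ is sound: $\theta$ restricts to an automorphism of $M$ of order exactly $\tau(f)$, and $\Delta^{\eta}f=0$ for some $\eta$ if and only if $N=\theta-\id$ is nilpotent on all of $M$, since $N$ commutes with $\theta$ and $M$ is spanned by the shifts of $f$. Both implications then work as written: for the forward one, $N'=\theta^{\tau/q}-\id=N\cdot(\theta^{\tau/q-1}+\cdots+\id)$ is indeed nilpotent, and multiplying $0=\phi^q-\id=\sum_{j\geq 1}\binom{q}{j}(N')^{j}$ by $(N')^{r-1}$ leaves exactly $q(N')^{r}=0$, which forces $(N')^{r}=0$ because $q$ is a unit in $\Z_{p^\ell}$; for the converse, the congruence $(x-1)^{p^t}\equiv x^{p^t}-1 \pmod{p}$ gives $N^{p^t}M\subseteq pM$, and iterating $\ell$ times uses the $p^\ell$-torsion of the coefficients to conclude $\Delta^{\ell p^t}f=0$. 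The two halves cleanly isolate where each hypothesis enters (primes $q\neq p$ are invertible modulo $p^\ell$; $p^\ell=0$). For comparison with what the paper could do with its own tools: the implication ``nilpotent $\Rightarrow$ $p$-power period'' also follows from results proved later in the text (\Cref{lemma:nilpotentwithconstants} writes a nilpotent $f$ as a finite sum of primitives of constant sequences, and \Cref{prop:periodconstants} shows each such primitive has $p$-power period, so $\tau(f)$ divides a power of $p$), whereas the converse implication is nowhere proved in the paper and ultimately rests on the same \cite{AV} citation; that half of your argument is therefore genuinely new content relative to the text.
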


\begin{remark}
The period of an idempotent sequence $ f \in \didem{p^\ell} $ may or may not be divisible by $p$. 
Using a generic computer algebra system one can easily check that  the sequence $[1,1,1,0,0,2,0,0,0,2,2,2,0,0,1,0,0,0] \in \per{3} $ is idempotent (of index 9) and it has period 18, and  the sequence $[0,2,0,0,1] \in \per{3} $ is idempotent (of index 80) and it has period 5.
\end{remark}

\begin{definition}
	Consider $ f \in \per{p^\ell} $ of period $\tau =  q p^t $ with $ p \nmid q $. The \emph{$ p^t $-periodised sequence} of $ f $ is the sequence:
	\[  \sum_{j =1}^{q} \theta^{jp^t} f = \theta^{p^t} f + \theta^{2p^t} f + \dots + \theta^{\tau - p^t} f + f. \]
	It is easy to verify that it has period dividing $ p^t $ \RED{and hence it is nilpotent}.
\end{definition}

\begin{Proposition}\label{remark:idemiffpperzero} \cite[Th. 17]{AV}
%\cite[Th. 3]{AAV2} citazione alternativa
Given $ f \in \per{p^\ell} $ of period $ \tau = q p^t $ with $ p \nmid q $, the  nilpotent part $ f_N $ of $ f $  coincides with the $ p^t $-periodised sequence of $ f $ multiplied by $ q^{-1} \mod p^\ell $.
\end{Proposition}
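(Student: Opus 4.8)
The plan is to evaluate the $p^{t}$-periodisation directly on the idempotent--nilpotent decomposition $f = f_{I} + f_{N}$ supplied by \Cref{lemma:antifitting}, and to show that the idempotent summand contributes nothing. Introduce the linear operator $\Pi := \sum_{j=1}^{q}\theta^{jp^{t}}$, so that $\Pi g$ is exactly the $p^{t}$-periodised sequence of $g$ for any $g \in \per{p^\ell}$. By linearity $\Pi f = \Pi f_{I} + \Pi f_{N}$, so the whole statement reduces to computing these two summands separately.

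First I would treat the nilpotent part. By \Cref{theorem:nilpotent_period} the period $\tau(f_{N})$ is a power $p^{s}$ of $p$, and since $\tau(f_{N})$ divides $\tau = q p^{t}$ with $p \nmid q$ one forces $s \le t$; hence $\theta^{p^{t}} f_{N} = f_{N}$. Consequently every summand $\theta^{jp^{t}} f_{N}$ equals $f_{N}$, and therefore $\Pi f_{N} = q\, f_{N}$.

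The heart of the argument is to show $\Pi f_{I} = 0$, which I would do by proving that $\Pi f_{I}$ is simultaneously idempotent and nilpotent, whence it must vanish exactly as observed inside the proof of \Cref{lemma:antifitting}. It is idempotent because each shift is: since $\theta$ commutes with $\Delta$, one has $\Delta^{\eta}(\theta^{jp^{t}} f_{I}) = \theta^{jp^{t}} \Delta^{\eta} f_{I} = \theta^{jp^{t}} f_{I}$ for the idempotency index $\eta$ of $f_{I}$, and $\didem{p^\ell}$ is a submodule, hence closed under the finite sum defining $\Pi f_{I}$. It is nilpotent because, using $\theta^{\tau} f_{I} = f_{I}$ (which holds as $\tau(f_{I}) \mid \tau$), a one-line reindexing of the sum gives $\theta^{p^{t}}(\Pi f_{I}) = \Pi f_{I}$; thus $\Pi f_{I}$ has period dividing $p^{t}$, and by \Cref{theorem:nilpotent_period} it is nilpotent. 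A sequence that is both idempotent and nilpotent is $0$, so $\Pi f_{I} = 0$.

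Combining the two computations yields $\Pi f = \Pi f_{I} + \Pi f_{N} = q\, f_{N}$, and multiplying by the inverse $q^{-1}$ modulo $p^{\ell}$ (which exists precisely because $p \nmid q$) gives $f_{N} = q^{-1} \Pi f$, as required. The only conceptual step is the idempotent--nilpotent dichotomy that kills $\Pi f_{I}$; the one point demanding care is the elementary bookkeeping ensuring $s \le t$, so that $\theta^{p^{t}}$ genuinely fixes $f_{N}$, while everything else is pure linearity.
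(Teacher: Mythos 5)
Your proof is correct, but there is nothing internal to compare it against: the paper states this proposition as a quotation of \cite[Th.~17]{AV} and supplies no proof of its own, so your argument is a genuine addition rather than a variant. What it buys is self-containedness: every ingredient you use is already established (or at least stated) in the paper itself, so the external dependency is reduced to \Cref{theorem:nilpotent_period}, which the paper also cites from \cite{AV}. Concretely, your steps all check out against the paper's toolkit: the splitting $\Pi f = \Pi f_I + \Pi f_N$ rests on the decomposition and on the period formula $\tau = \lcm\{\tau(f_I),\tau(f_N)\}$ from \Cref{lemma:antifitting}; on the nilpotent summand, \Cref{theorem:nilpotent_period} gives $\tau(f_N)=p^s$, divisibility of $p^s$ into $qp^t$ with $p\nmid q$ gives $s\le t$, hence $\theta^{p^t}f_N=f_N$ and $\Pi f_N = q f_N$; on the idempotent summand, commutation of $\theta$ with $\Delta$ plus the fact that $\didem{p^\ell}$ is a submodule makes $\Pi f_I$ idempotent, while your reindexing of the sum via $\theta^\tau f_I = f_I$ (legitimate because $\tau(f_I)\mid\tau$) shows $\theta^{p^t}(\Pi f_I)=\Pi f_I$, so $\Pi f_I$ has $p$-power period and is nilpotent again by \Cref{theorem:nilpotent_period}; and the dichotomy ``idempotent and nilpotent implies zero'' is exactly the observation used in the uniqueness step of the paper's proof of \Cref{lemma:antifitting}. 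The conclusion $f_N = q^{-1}\Pi f$ then follows since $p\nmid q$ makes $q$ invertible in $\Z_{p^\ell}$. One small remark: the nilpotency of $\Pi f_I$ could also be quoted directly from the sentence following the paper's definition of the $p^t$-periodised sequence (any such periodisation has period dividing $p^t$), but your explicit reindexing is exactly the verification that sentence leaves to the reader, so nothing is missing.
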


\begin{corollary}\label{cor:idempotent}
	Let $ f \in \per{p^\ell} $ be a periodic sequence.
	\begin{enumerate}
		\item If $ f $ is idempotent, then $ \trace f = 0 $.
		\item If  $ \trace f = 0 $ and $p\nmid\tau(f)$ then $f$ is idempotent.
%	
%	 prime to $ p $.
%	\begin{itemize}
%		\item  $ f $ is idempotent if and only if $ \trace f = 0 $;
%		\item  if $ \trace f \neq 0$, one has for $ c = \tau^{-1} \trace f  \mod p^\ell $: \[ f = f_I + [c]. \]
	\end{enumerate} 
	\begin{proof}
1. If $f$ is idempotent with idepotency index $\eta$, from $\Delta^{\eta}f=f$ one gets
$\Delta^{\eta-1} f-[\Delta^{\eta-1}f(0)]=\Sigma f$. By the idempotency of $f$ and 	\Cref{lemma:primitiveperiod} one has
\[\tau(f)=\tau\left(\Delta^{\eta-1} f\right)=\tau\left(\Delta^{\eta-1} f-[\Delta^{\eta-1}f(0)]\right)=\tau\left(\Sigma f\right)=h\cdot\tau(f)\]
where $h$ is the additive order of $\trace f$. Then $h=1$ and hence $ \trace f=0 $.\\
2. If $ p \nmid \tau $, the $ p^t $-periodised of $ f $ coincides with the constant sequence $ [\trace f] =[0]$. By \Cref{remark:idemiffpperzero}, $f$ is idempotent.
	\end{proof}
\end{corollary}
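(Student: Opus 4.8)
The plan is to treat the two implications separately, converting in each case a statement about idempotency or nilpotency into a statement about periods and then invoking \Cref{lemma:primitiveperiod}.

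For part (1) I would start from the hypothesis that $f$ is idempotent with idempotency index $\eta$, so $\Delta^{\eta}f = f$. The key observation is that $\Delta^{\eta-1}f$ is itself a primitive of $f$: since $\Delta(\Delta^{\eta-1}f)=f$, applying the identity $(\Sigma\circ\Delta)(g)=g-[g(0)]$ (the $c=0$ case of $(\Sigma_c\circ\Delta)(f)=f+[c-f(0)]$) to $g=\Delta^{\eta-1}f$ yields $\Sigma f = \Delta^{\eta-1}f - [\Delta^{\eta-1}f(0)]$. I would then argue that every member of the $\Delta$-orbit of an idempotent sequence shares the same period: because $\Delta$ cyclically permutes $f,\Delta f,\dots,\Delta^{\eta-1}f$ and $\tau(\Delta g)\mid\tau(g)$ always holds, the resulting chain of divisibilities closes up and forces $\tau(\Delta^{\eta-1}f)=\tau(f)$. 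Since subtracting a constant sequence alters the period in neither direction, this gives $\tau(\Sigma f)=\tau(\Delta^{\eta-1}f)=\tau(f)$. Comparing with \Cref{lemma:primitiveperiod}, which states $\tau(\Sigma f)=h\cdot\tau(f)$ for $h$ the additive order of $\trace f$, forces $h=1$, and $h=1$ means exactly $\trace f=0$.

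For part (2) I would assume $\trace f=0$ and $p\nmid\tau(f)$, and write $\tau(f)=q p^{t}$ with $p\nmid q$. The hypothesis $p\nmid\tau(f)$ forces $t=0$, so the relevant object is the $p^{0}=1$-periodised sequence $\sum_{j=1}^{q}\theta^{j}f$. Computing its $n$-th entry directly gives $\sum_{j=1}^{q}f(n+j)$, a sum over one full period, hence the constant $\trace f=0$. Thus this periodised sequence equals $[0]$, and by \Cref{remark:idemiffpperzero} the nilpotent part $f_{N}$, being $q^{-1}$ times it, is also $0$. The uniqueness of the decomposition in \Cref{lemma:antifitting} then yields $f=f_{I}$, so $f$ is idempotent.

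The main obstacle I anticipate is the period bookkeeping in part (1): one must argue carefully both that the entire $\Delta$-orbit of an idempotent sequence has constant period and that adding or subtracting a constant sequence leaves the period unchanged, the subtle direction being that the period cannot strictly decrease, which is recovered by adding the constant back. Once these invariance facts are in place, the equality $\tau(\Sigma f)=\tau(f)$ and the translation of ``$h=1$'' into ``$\trace f=0$'' via additive order are immediate, and part (2) is then largely a direct computation feeding into \Cref{remark:idemiffpperzero}.
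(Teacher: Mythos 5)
Your proposal is correct and takes essentially the same route as the paper's proof: in part (1) you derive the identity $\Sigma f = \Delta^{\eta-1}f - [\Delta^{\eta-1}f(0)]$ and compare periods via \Cref{lemma:primitiveperiod} to force $h=1$, and in part (2) you observe that the $p^0$-periodised sequence equals $[\trace f]=[0]$ and conclude via \Cref{remark:idemiffpperzero}. The only difference is that you make explicit the details the paper leaves implicit (constancy of the period along the $\Delta$-orbit of an idempotent sequence, and invariance of the period under subtracting a constant), and these justifications are correct.
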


Given a prime $p$ and a natural number $m$, we denote by $\nu_p(m)$ (or simply by $\nu(m) $ when the prime $p$ is clear in the context) the $p$-adic evaluation of $m$, i.e., the highest
power of $p$ dividing $m$. In particular the $p$-adic evaluation of 0 is infinite. The elements in any non zero coset in $\mathbb Z_{p^\ell}$ have the same $p$-adic evaluation: therefore setting $\nu_p(0+p^\ell\mathbb Z)=\infty$ the $p$-adic evaluation can be defined also on $\mathbb Z_{p^\ell}$.
%\subsection*{Decomposition of nilpotent sequences using constants}
\begin{definition}\label{def:genvec}
Let $f\in \didem{p^\ell}\cup\dnil{p^\ell}$ be a either idempotent or nilpotent periodic sequence  with idempotency or nilpotency index $\eta$. We call \emph{generating vector} of $f$ the ordered $\eta$-tuple
\[\vect(f)=(e^{f}_0 , e^{f}_1 , ..., e^{f}_{\eta-1})\in\mathbb Z_{p^\ell}^\eta\quad e^{f}_i = \Delta^i {f} (0), \ 0\leq i<\eta.\]
The last entry of $\vect(f)$ with minimal $p$-adic evaluation is the \emph{leading component} of $f$.
\end{definition}

\begin{example}\label{example:vieru}
\RED{Consider the sequence $ V= [2,1,2,4,8,1,8,4] \in \per{12} $.
The $ 2 $- and the $ 3 $-parts of $ V $ are
\[ V_2 = [2,1,2,0,0,1,0,0] \in \per{4} \qquad V_3 = [2,1] \in \per{3} .\]}
The sequence $ V_3 $ has period $ 2 $ and   $ \trace (V_3) = 0 $: hence  it is idempotent by Corollary~\ref{cor:idempotent}. Clearly it has idempotency index 1; then $\vect(V_3)=(2)$.
The sequence $ V_2 $ has period $ 8 $ and hence by Theorem~\ref{theorem:nilpotent_period}  it is nilpotent. Since
	$ \Delta^5 V_2 = 0 $
	while $ \Delta^4 V_2 = [2] $, the sequence $ V_2 $ has nilpotency index $ 5 $; then $\vect(V_2)=(2,1,2,0,0)$.
By the Chinese Remainder Theorem, the sequences $ V_2 $ and $ V_3 $ correspond respectively  to the following sequences in $\per{12} $:
	\[ \tilde{V}_2 = [6,9,6,0,0,9,0,0] \qquad \tilde{V}_3 = [8,4] \]
	and $ V = \tilde{V}_2 + \tilde{V}_3 $.
	\end{example}

\begin{Proposition}\label{vectno0}
Any idempotent (resp. nilpotent) sequence $f\in\didem{p^\ell}\cup\dnil{p^\ell}$ determines univocally its generating  vector $\vect(f)$.
\end{Proposition}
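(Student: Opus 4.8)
The plan is to verify that $\vect$ is a genuine function on $\didem{p^\ell}\cup\dnil{p^\ell}$, i.e. that $f$ alone fixes every ingredient of $\vect(f)$. There are exactly two such ingredients: the length $\eta$ of the tuple and its entries $e^f_i=\Delta^i f(0)$. The entries pose no problem, since for each fixed $i$ the operator $\Delta^i$ acts on $\per{p^\ell}$ and evaluation at $n=0$ is a function, so $e^f_i$ depends only on $f$ and $i$. Hence the whole question collapses to showing that the length $\eta$ is unambiguously determined by $f$.

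Recall that $\eta$ is prescribed as the idempotency index when $f\in\didem{p^\ell}$ and as the nilpotency index when $f\in\dnil{p^\ell}$. Within either class this is harmless: each index is the minimum of a nonempty set of positive integers and so is uniquely attached to $f$. The only way an ambiguity could arise is if a single sequence belonged to both $\didem{p^\ell}$ and $\dnil{p^\ell}$, for then the idempotency and nilpotency conventions might return different lengths. The crux of the proof is therefore to rule this out by establishing $\didem{p^\ell}\cap\dnil{p^\ell}=\{0\}$.

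I would prove this intersection claim directly. Suppose $f$ satisfies both $\Delta^a f=f$ and $\Delta^b f=0$ for some $a,b\ge 1$. A one-line induction on $k$ gives $\Delta^{ka}f=f$ for all $k\ge 0$; picking $k$ with $ka\ge b$ and writing $\Delta^{ka}f=\Delta^{ka-b}(\Delta^b f)=0$ forces $f=0$, while the zero sequence plainly lies in both submodules. Finally, for this unique overlapping sequence the two conventions agree: the minimal $\eta\ge 1$ with $\Delta^\eta 0=0$ is $\eta=1$ for both the idempotency and the nilpotency reading, and both yield $\vect(0)=(0)$. Thus $\eta$, and hence $\vect(f)$, is well defined on all of $\didem{p^\ell}\cup\dnil{p^\ell}$. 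I expect the intersection computation of this paragraph to be the only real obstacle; the remainder is the observation that the defining formula for the entries is already deterministic.
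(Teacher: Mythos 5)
There is a genuine gap here, and it is one of interpretation that swallows the whole content of the proposition. You read the statement literally (``$f$ determines $\vect(f)$'') and so reduce everything to checking that the assignment $f\mapsto\vect(f)$ is a well-defined function. But the paper's own proof makes the intended content unambiguous: it begins by noting that $\vect$ restricted to $\didem{p^\ell}$ (resp.\ $\dnil{p^\ell}$) is a homomorphism of $\Z_m$-modules and that ``it is sufficient to prove that $\vect(f)=0$ implies $f=0$''. In other words, \Cref{vectno0} is an \emph{injectivity} statement: the generating vector determines the sequence univocally, so that an idempotent or nilpotent sequence and its finite vector carry the same information (this is the direction with actual content, and the one the rest of the paper leans on). The paper's kernel computation is not trivial: for idempotent $f$ with $\Delta^{\eta}f=f$ and $\vect(f)=0$, the vanishing of $\Delta^{i}f(0)$ for $0\le i<\eta$ unrolls to $f(0)=f(1)=\dots=f(\eta-1)=0$, and then the relation $f=\Delta^{\eta}f$ propagates the vanishing to $f(\eta),f(\eta+1),\dots$, giving $f=0$; for nilpotent $f\neq 0$ of index $\eta$, the sequence $\Delta^{\eta-1}f$ is a nonzero constant, so the last entry $e^{f}_{\eta-1}$ of $\vect(f)$ is nonzero. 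None of this appears in your proposal, so the claim the paper actually needs is left unproven; the direction you do prove is, as you yourself observe, essentially immediate from \Cref{def:genvec}.

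To your credit, the one substantive computation you give --- $\didem{p^\ell}\cap\dnil{p^\ell}=\{0\}$ via $\Delta^{ka}f=\Delta^{ka-b}(\Delta^{b}f)=0$ --- is correct, and it is precisely the fact the paper uses silently in the uniqueness step of \Cref{lemma:antifitting} (``$f_I-f_I'$ is both nilpotent and idempotent thus it is equal to $0$''); your handling of the index of the zero sequence is also fine. But this lemma does not touch injectivity of $\vect$ on either class: two distinct idempotent sequences with equal generating vectors would produce a nonzero \emph{idempotent} difference with vanishing vector, not an element of the intersection, and ruling that out requires exactly the unrolling argument above. To repair the proposal, add the module-homomorphism observation and the two kernel arguments; alternatively, for the nilpotent half only, injectivity would follow from the reconstruction formula $f=\sum_{i=0}^{\eta-1}\Sigma^{i}[e^{f}_{i}]$ of \Cref{lemma:nilpotentwithconstants}, though that result is stated after the proposition and the idempotent half would still need the paper's argument.
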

\begin{proof}
It follows immediately from \Cref{def:genvec} that $\vect$ restricted to $\didem{p^\ell}$ (resp. $\dnil{p^\ell}$) is an homomorpism of $\mathbb Z_m$-modules. Then it is sufficient to prove that $\vect(f)=0$ implies $f=0$.\\
If $f$ is idempotent with idempotency index $\eta$ and $\vect(f)=0$, then
\begin{align*}
0&=f(0)\\
0&=\Delta f(0)=f(1)\\
&\ \,\vdots\\
0&=\Delta^{\eta-1}f(0)=\Delta^{\eta-2}f(1)=\cdots f(\eta-1)\\
0&=f(0)=\Delta^{\eta} f(0)=\Delta^{\eta-1}f(1)=\cdots=f(\eta)\\
&\ \,\vdots
\end{align*}
and hence $f=0$.\\
If $f\not=0$ is nilpotent with nilpotency index $\eta$, necessarily $\geq 1$, then $\Delta^{\eta-1}f$ is a non zero constant sequence and hence the $(\eta-1)$-th component $\Delta^{\eta-1}f(0)$ of the generating vector is non zero.
\end{proof}
%\begin{align*}
%0&=f_I(0)\\
%0&=\Delta f_I(0)=f_I(1)\\
%&\ \,\vdots\\
%0&=\Delta^{\eta_I-1}f_I(0)=\Delta^{\eta_I-2}f_I(1)=\cdots f_I(\eta_I-1)\\
%0&=f_I(0)=\Delta^{\eta_I} f_I(0)=\Delta^{\eta_I-1}f_I(1)=\cdots=f_I(\eta_I)\\
%&\ \,\vdots
%\end{align*}
%and hence $f_I=0$.
%\end{proof}
%\begin{itemize}
%\item If $f$ is idempotent, then $\ivect(f)\not=0$.
%\end{itemize}
%
%\begin{itemize}
%\item the $(\eta-1)$-th component of the generating vector of a nilpotent sequence $f$ is always non zero: $\Delta^{\eta-1}f$ is a non zero constant sequence. 
%\item if $f$ is idempotent and $\vect(f)$ is the zero vector, then 
%\begin{align*}
%0&=f(0)\\
%0&=\Delta f(0)=f(1)\\
%&\ \,\vdots\\
%0&=\Delta^{\eta-1}f(0)=\Delta^{\eta-2}f(1)=\cdots f(\eta-1)\\
%0&=f(0)=\Delta^\eta f(0)=\Delta^{\eta-1}f(1)=\cdots=f(\eta)\\
%&\ \,\vdots
%\end{align*}
%\end{itemize}

%Finally, let us prove that any nilpotent periodic sequence is a sum of primitives of constant sequences.

%Our first purpose is to reduce the study of nilpotent periodic sequences to that of constant ones using the following result.
\begin{Proposition}\label{lemma:nilpotentwithconstants}
Let $f\in \dnil{p^\ell}$ be a nilpotent sequence of nilpotency index $\eta$ with generating vector $\vect(f)=(e^f_0,...,e^f_{\eta-1})$. Then 
%$f$ is a finite sum of primitives of constant sequences:
$\displaystyle f = \sum_{i=0}^{\eta-1} \Sigma^i [e^f_i]$.
\begin{proof}
%Let $ \eta $ be the nilpotency index of $ f $.
% For all $ i = 0, \dots, \eta-1 $ define
%\[ e^f_i = \Delta^i f (0). \]
%Since $\Delta^{\eta-1} f\not=0$ and  $\Delta^{\eta} f=0$, $\Delta^{\eta-1} f$ is equal to the non zero constant sequence $[e^f_{\eta-1}]$.
%Let us prove that
%\[ f = \sum_{i=0}^{\eta-1} [e^f_i]^{i} = \sum_{i=0}^{\eta-1} \Sigma^i [e^f_i]. \]
We proceed by induction on $ \eta: $
\begin{itemize}
	\item $ \eta = 1 $ means $ \Delta f = 0 $, so $ f = [c]  = [f(0)]$.
			
\item Suppose that the statement holds for $ \eta=t $. If $ f $ has nilpotency index $ \eta=t+1 $, then $ \Delta f $ has nilpotency index $ t $ and by inductive hypothesis:
\[ \Delta f = \sum_{i=0}^{\eta-1} [e^{\Delta f}_i]^{i} = \sum_{i=0}^{\eta-1} [\Delta^i (\Delta f)(0)]^{i}  =  \sum_{i=0}^{\eta-1} [\Delta^{i+1}  f(0)]^{i} = \sum_{i=0}^{\eta-1}  [e^f_{i+1}]^{i} .\]
Since
$f = \Sigma_{f(0)} \Delta f =[f(0)] + \Sigma \Delta f$
we obtain that
\[	f =    [f(0)] + \Sigma \left(\sum_{i=0}^{\eta-1} [e^f_{i+1}]^{i} \right)
	=    [e^f_0] + \sum_{i=1}^\eta [e^f_i]^{i}
	=   \sum_{i=0}^\eta [e^f_i]^{i}.
\]
\end{itemize}
\end{proof}

\end{Proposition}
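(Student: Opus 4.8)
The plan is to prove the identity $f = \sum_{i=0}^{\eta-1} \Sigma^i [e^f_i]$ by induction on the nilpotency index $\eta$, exploiting the fact that applying $\Delta$ to a nilpotent sequence of index $\eta$ produces a nilpotent sequence of index $\eta - 1$, together with the relation $\Sigma_{f(0)} \circ \Delta = \id$ established earlier. The key observation is that the generating vector of $\Delta f$ is simply the shift of the generating vector of $f$: indeed, $e^{\Delta f}_i = \Delta^i(\Delta f)(0) = \Delta^{i+1} f(0) = e^f_{i+1}$ for $0 \le i \le \eta - 2$.

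For the base case $\eta = 1$, the condition $\Delta f = 0$ forces $f$ to be the constant sequence $[f(0)] = [e^f_0] = \Sigma^0[e^f_0]$, so the formula holds trivially. For the inductive step, I would assume the formula for nilpotency index $\eta - 1$ and apply it to $\Delta f$, obtaining
\[ \Delta f = \sum_{i=0}^{\eta-2} \Sigma^i [e^{\Delta f}_i] = \sum_{i=0}^{\eta-2} \Sigma^i [e^f_{i+1}]. \]
I would then recover $f$ from $\Delta f$ using the identity $f = \Sigma_{f(0)} \Delta f = [f(0)] + \Sigma(\Delta f)$, which holds because $\Sigma_c g = \Sigma g + [c]$ and $\Sigma_{f(0)} \circ \Delta = \id$. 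Applying $\Sigma$ to the inductive expression, using linearity of $\Sigma$ and $\Sigma \circ \Sigma^i = \Sigma^{i+1}$, and reindexing the sum, the $[f(0)] = [e^f_0]$ term fills in the missing $i = 0$ slot and the result telescopes into $\sum_{i=0}^{\eta-1} \Sigma^i [e^f_i]$.

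I do not expect any serious obstacle here, since every ingredient is already available: the shift behaviour of the generating vector under $\Delta$ follows directly from \Cref{def:genvec}, and the reconstruction identity $f = [f(0)] + \Sigma \Delta f$ is exactly the $(\Sigma_{f(0)} \circ \Delta)(f) = f$ relation recorded in \Cref{section:periodic_sequences}. The only point requiring mild care is the bookkeeping of indices when passing from the sum over $\Delta f$ (ranging $i = 0, \dots, \eta-2$) back to the sum for $f$ (ranging $i = 0, \dots, \eta-1$): after applying $\Sigma$ each term $\Sigma^i[e^f_{i+1}]$ becomes $\Sigma^{i+1}[e^f_{i+1}]$, and reindexing $j = i+1$ gives $\sum_{j=1}^{\eta-1} \Sigma^j[e^f_j]$, to which the isolated constant term $[e^f_0] = \Sigma^0[e^f_0]$ must be prepended. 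Verifying that $\Delta f$ genuinely has nilpotency index exactly $\eta - 1$ (so that the induction applies with the correct length of generating vector) is immediate from the minimality of $\eta$.
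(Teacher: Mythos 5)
Your proposal is correct and follows essentially the same route as the paper's own proof: induction on the nilpotency index, using the shift property $e^{\Delta f}_i = e^f_{i+1}$ of the generating vector together with the reconstruction identity $f = [f(0)] + \Sigma\Delta f$, and then reindexing the sum. Your index bookkeeping (summing to $\eta-2$ for $\Delta f$) is in fact slightly cleaner than the paper's, which reuses $\eta$ ambiguously inside the inductive step.
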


%This allows us to study a generic nilpotent sequence by looking at primitives of constants and their sums.

\RED{\begin{remark}
  With the notation of the previous proposition, notice that
  \[\tau(f) \mid \max\{\tau(\Sigma^{i} [e^f_i])\,:\, 0\leq i<\eta\}.\]
  In general we have not the equality.
  For example \(f = [0,2]\in \dnil{8} \) is a nilpotent sequence of nilpotency index $\eta=3$. It has period 2 and $\nvect(f)=(0,2,4)$.
  By \Cref{lemma:nilpotentwithconstants} one has  \(f = \Sigma[2] + \Sigma^{2}[4]\), and both \(\Sigma[2]\) and  \(\Sigma^{2}[4]\) have period 4.
%  In Theorem~\ref{theorem:leading_term} we prove that the equality holds \emph{definitively} for the primitives of \( f \).
\end{remark}
}
\section{Constant sequences and their primitives}\label{section:period}

\blue{
	The first lemma we are presenting in this section displays the connection between the primitives of constant sequences and the binomial coefficients.
Then, we will present several results that show how it is possible to reduce the study of primitives of a generic sequence to primitives of constant sequences.
	}

\begin{lemma}  \label{lemma:binomialconstants}
	If $ [c] $ is a constant sequence in $ \per{m} $, then
	\[[c]^s(n)= \Sigma^{s}[c](n)\equiv_m c \binom ns\quad\forall s,n\geq 0. \]
	\begin{proof}
		Since $ \Sigma $ is linear, it is enough to prove the statement for  $ c = 1 $. We proceed by induction on $ s $: the case $ s=0 $ is immediate. Now we suppose true the statement for $ s $ and we prove it for $ s + 1 $. We proceed by induction on $ n $. For $ n = 0 $, we have:
			\[ [1]^{s+1}(0)= \Sigma^{s+1} [1] (0) = 0 = \binom{0}{s+1}. \]
			Now suppose that $ [1]^{s+1} (n) = \binom{n}{s+1} $; we prove the statement for $ n+1: $
			\begin{align*}
				[1]^{s+1} (n+1) = [1]^s (n) + [1]^{s+1} (n) = \binom{n}{s} + \binom{n}{s+1} = \binom{n+1}{s+1}.
			\end{align*}
	\end{proof}
\end{lemma}

%As recalled in the historical introduction, this problem was addressed by Vieru in a math-music context in \cite{V}.

%\begin{lemma} \label{lemma:primitiveperiod}
%For any $ f \in \per{m} $, let $ h $ be the additive order of $ \trace f $ in $ \Z_m $, we have
%  \[ \tau(\Sigma f ) = h \cdot \tau(f).\]
% %where $ h $ is the additive order of $ \trace f $ in $ \Z_m $.
%\begin{proof}
%   We already observed that
%   \( \tau(\Delta g)   \mid  \tau(g) \),
%   so for \(g = \Sigma f\) we have
%   \[ \tau(f) = \tau(\Delta (\Sigma f)) \mid \tau(\Sigma f). \]
%By (\eqref{eq:intform} at page~\pageref{eq:intform} ),  for any \( i \) one has \( \Sigma f (\tau(f) d + i) - \Sigma f (i) = d \cdot \trace{f} \).
% We deduce that \(\tau(\Sigma f) = h \cdot\tau(f)\).
%\end{proof}
%\end{lemma}
%\subsection*{The period of the primitives of constant sequences}
\blue{Hence we are reduced to study binomial coefficients modulo $ p^\ell $.
In order to do so, the main tool is \emph{Kummer's Theorem} \cite{K52}.}
This result  says that, given a prime $p$, for given integers $n\geq m\geq0$ with $p$-adic representation
\[n=\lfloor a_sa_{s-1}...a_1a_0 \rfloor_p,\quad s = \lfloor b_sb_{s-1}...b_1b_0 \rfloor_p,\]  the $p$-adic valuation $\nu_p\left(\binom ns \right)$ of the binomial coefficient $n$ over $s$ is equal to the number of borrows in the subtraction $\lfloor a_sa_{s-1}...a_1a_0\rfloor_p-\lfloor b_sb_{s-1}...b_1b_0\rfloor_p$.

\begin{example}
	Consider the numbers
	\[798= \lfloor 1002120 \rfloor_3, \quad 454=\lfloor 121211 \rfloor_3.\]
	Let us compute the $3$-adic valuation of $\binom{798}{454}$:
		\[\begin{tabular}{ cccccccl} 
			$ \check{1}$ & $\check{0}$ & 0&$\check{2}$&1&$\check{2}$&0&- \\ 
			&1&2&1&2&1&1&= \\ 
			\hline\\
			&1&1&0&2&0&2&
		\end{tabular}\qquad\Longrightarrow\quad \nu_3\left(\binom{798}{454}\right)=4.\]
\end{example}

Kummer's Theorem allows to characterize the period of primitives of constant sequences. First let us prove a technical lemma:

\begin{lemma}\label{lemma:technical}
	Let $ [c] $ be a non zero constant sequence in $ \per{p^\ell} $ with $ \nu_{p}(c)=t. $ Consider \(s \in \N\) and  \(s =  \lfloor a_ka_{k-1}\cdots a_1a_0 \rfloor_p \)
	with $a_k\not=0$.
The following equality holds: % \blue{(in $\Bbb Z_{p^\ell}$)}:
\begin{align*}
	\nu_p\left(\sum_{n=0}^{p^{\ell-t+k}-1} [c]^s(n) \right) =\begin{cases}
	\ell-1 & \text{ if } a_k=a_{k-1}=\cdots=a_0=p-1\\ 
	\geq \ell &\text{ otherwise.} 
	 %p^{\ell-1} u, \quad (p,u)= 1 & \text{ if } \blue{a_k=a_{k-1}=\cdots=a_0=p-1}\\ 
	 %= \sum_{i=0}^k (p-1) p^i =  [\underbrace{(p-1) \cdots (p-1)}_{k+1}]_p \\
	  %0 &\text{ otherwise.}
\end{cases}
\end{align*}

\begin{proof}
From Lemma~\ref{lemma:binomialconstants} and a well known binomial identity (e.g.,\cite[Prop. 2.56]{MT1}) we have:
\[ \sum_{n=0}^{p^{\ell-t+k}-1} [1]^s(n) = \sum_{n=0}^{p^{\ell-t+k}-1} c \binom{n}{s} = c \binom{p^{\ell-t+k}}{s+1}.\]
If $ s \neq  \lfloor\underbracket[0.1ex]{(p-1) \cdots (p-1)}_{k+1} \rfloor_p $, we can write $ s+1 = \lfloor b_k\cdots b_0 \rfloor_p $.
Suppose that $ h $ is the smallest index such that $ b_h \neq 0 $.
The computation $ p^{\ell-t+k} - (s+1) $ in base $ p $:
\[
\begin{matrix}
1 \underbracket[0.1ex]{0 \dots 0}_{\ell-t-1} & 0 & 0 & \cdots & 0 & \underbracket[0.1ex]{0 \cdots 0}_h \\
& b_k & b_{k-1} & \cdots & b_h & \underbracket[0.1ex]{0 \cdots 0}_h
\end{matrix}
\]
gives rise to $ \ell-t+k-h $ borrows. Thus by Kummer's Theorem

\[ \nu_p \binom{p^{\ell -t +k}}{s+1} = \ell -t +k -h.
 \]
Since $\nu_p(c)=t$, we conclude 
$\nu_p\left( c \binom{p^{\ell-t+k}}{s+1}\right)=\ell +k -h \geq \ell$.

Now suppose $ s = \lfloor\underbracket[0.1ex]{(p-1) \cdots (p-1)}_{k+1} \rfloor_p $;
%$ s = \sum_{i=0}^k (p-1) p^i  $; 
in this case $ s+1 = p^{k+1} $. The computation $ p^{\ell-t+k}-p^{k+1} $  in base $ p $:
\[
\begin{matrix}
1 \underbracket[0.1ex]{0 \dots 0}_{\ell -t - 1} & 0 & 0 & \cdots & 0 & \underbracket[0.1ex]{0 \cdots 0}_h \\
{\phantom{1 0 \dots}} 1 & 0 &0 & \cdots & 0 & \underbracket[0.1ex]{0 \cdots 0}_h
\end{matrix}
\]
gives rise to $ \ell-t-1 $ borrows, thus by Kummer's Theorem we get:
\[ \nu_p \left(c \binom{p^{\ell-t+k}}{s+1}\right)=\nu_p  (c)\cdot\nu_p\left(\binom{p^{\ell-t+k}}{p^{k+1}}\right)  = {\ell-1}. \]
 \end{proof}
\end{lemma}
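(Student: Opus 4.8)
The plan is to collapse the sum into a single binomial coefficient and then read off its $p$-adic valuation through Kummer's Theorem. First I would apply \Cref{lemma:binomialconstants} to rewrite $[c]^s(n) \equiv_{p^\ell} c\binom{n}{s}$, so that $\sum_{n=0}^{p^{\ell-t+k}-1}[c]^s(n) \equiv_{p^\ell} c\sum_{n=0}^{p^{\ell-t+k}-1}\binom{n}{s}$. Then I would invoke the hockey-stick identity $\sum_{n=0}^{N-1}\binom{n}{s}=\binom{N}{s+1}$ with $N=p^{\ell-t+k}$ to reduce the sum to $c\binom{p^{\ell-t+k}}{s+1}$. Since $\nu_p$ is additive on products and $\nu_p(c)=t$, the problem becomes computing $\nu_p\binom{p^{\ell-t+k}}{s+1}$ and adding $t$.

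To evaluate $\nu_p\binom{p^{\ell-t+k}}{s+1}$ I would count the borrows in the base-$p$ subtraction $p^{\ell-t+k}-(s+1)$. Here I would first record that $c\neq 0$ in $\Z_{p^\ell}$ forces $t\leq\ell-1$, hence $\ell-t\geq 1$, so the lone nonzero digit of the minuend $p^{\ell-t+k}$ sits strictly above position $k$. I would then split according to the two cases of the statement. In the exceptional case $s=\lfloor\underbracket[0.1ex]{(p-1)\cdots(p-1)}_{k+1}\rfloor_p$ one has $s+1=p^{k+1}$, and subtracting $p^{k+1}$ from $p^{\ell-t+k}$ produces exactly $\ell-t-1$ borrows, so Kummer gives $\nu_p\binom{p^{\ell-t+k}}{p^{k+1}}=\ell-t-1$ and the total valuation is $t+(\ell-t-1)=\ell-1$.

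In the generic case, since the digits of $s$ are not all equal to $p-1$, adding $1$ triggers a carry that stops below position $k+1$, so I can write $s+1=\lfloor b_k\cdots b_0\rfloor_p$ with $k+1$ digits. Letting $h$ be the least index with $b_h\neq 0$ (so $0\leq h\leq k$), the block of $h$ trailing zeros means the borrow chain starts at position $h$ and propagates unbroken up to position $\ell-t+k$, yielding $\ell-t+k-h$ borrows. The total valuation is then $t+(\ell-t+k-h)=\ell+k-h\geq\ell$ because $h\leq k$, which is the desired bound.

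The main obstacle is the careful bookkeeping of the subtraction rather than any conceptual difficulty: I must justify that $s+1$ indeed fits into $k+1$ base-$p$ digits (using $a_k\neq 0$ together with the failure of all lower digits to equal $p-1$), correctly locate the lowest nonzero digit $h$, and check that the borrow chain neither terminates prematurely nor overflows past position $\ell-t+k$. As a safeguard I would cross-check the two borrow counts against Legendre's digit-sum formula $\nu_p\binom{N}{r}=(S_p(r)+S_p(N-r)-S_p(N))/(p-1)$. The remaining ingredients, namely the binomial rewriting, the hockey-stick collapse, and the additivity of $\nu_p$, are routine.
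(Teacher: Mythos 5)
Your proposal is correct and follows essentially the same route as the paper's own proof: the hockey-stick collapse to $c\binom{p^{\ell-t+k}}{s+1}$, followed by Kummer's Theorem with the identical case split and the same borrow counts ($\ell-t+k-h$ in the generic case, $\ell-t-1$ when $s+1=p^{k+1}$). Your explicit remarks that $c\neq 0$ forces $t\leq\ell-1$ and that valuations add (the paper misprints this as a product in the second case) are welcome refinements of the same argument rather than a different approach.
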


Let us compute the period  of primitives of constant sequences  %following the result of \cite{Z}
(see also \cite{Z}, \cite{SV}, \cite{LT}):
\begin{Proposition}\label{prop:periodconstants}
  Let $[c]$ be a non zero constant sequence in $ \per{p^\ell}$ with $t = \nu_{p}(c)$.
  Let $s\in\mathbb N$ and $s = \lfloor a_{k_{s}}a_{k_{s}-1}\cdots a_1a_0 \rfloor_p $, $a_{k_{s}}\not=0$.
  The sequence $\Sigma^s[c]$ has period $p^{\ell-t+k_{s}}$.
	
\begin{proof}
We proceed by induction on $ s$.
\begin{itemize}
	\item For $ s = 1 $, $ k_s = 0 $, $\tau([c])=1$ and \(\trace([c])=c\), so by Lemma~\ref{lemma:primitiveperiod} the sequence \( \Sigma  [c] \) has period $ p^{\ell-t} $.
			
	\item Suppose that the statement is true for $ s $, we prove it for $ s+1 $:
			
\begin{itemize}
  \item  If $ s \neq  \lfloor(p-1) \cdots (p-1) \rfloor_p $,
  %$ s \neq \sum_{i=0}^{k_s} (p-1) p^i  $,  
  we have $ k_s = k_{s+1} $ and by inductive hypothesis the period of \([c]^{s}=\Sigma^s[c]\) is \(p^{\ell -t + k_{s}}\).
Lemma~\ref{lemma:technical} gives \[ \trace([c]^{s}) = \sum_{n=0}^{p^{\ell-t+k_{s}}-1} [c]^{s}(n)= 0. \]
Thus by Lemma~\ref{lemma:primitiveperiod} one has \( \tau([c]^{s+1}) = \tau([c]^s) = p^{\ell-t+k_s} = p^{\ell-t+k_{s+1}}\).

	\item If $ s=  \lfloor(p-1) \cdots (p-1) \rfloor_p $,
	%$ s = \sum_{i=0}^{k_s} (p-1) p^i  $, 
	we have $ k_{s+1} = k_s +1 $ and  Lemma~\ref{lemma:technical} gives 
	$ \nu_p(\trace([c]^s)) = \ell-1 $. By Lemma~\ref{lemma:primitiveperiod}
	\[  \tau([c]^{s+1}) = p \tau([c]^{s}) = p p^{\ell-t+k_s} = p^{\ell-t+k_s +1} = p^{\ell-t+k_{s+1}}.\]
\end{itemize}
\end{itemize}
\end{proof}
\end{Proposition}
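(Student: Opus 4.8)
The plan is to prove \Cref{prop:periodconstants} by induction on $s$, using \Cref{lemma:primitiveperiod} to track how the period changes at each step and \Cref{lemma:technical} to supply the key $p$-adic information about the trace. The central idea is that the period of $\Sigma^{s+1}[c] = \Sigma([c]^s)$ is governed by the additive order of $\trace([c]^s)$ in $\Z_{p^\ell}$, and that this order is exactly $1$ (so the period is unchanged) when appending a base-$p$ digit to $s$ does not carry into a new digit position, and exactly $p$ (so the period multiplies by $p$) when it does. The exponent $k_s$, being the position of the top nonzero digit of $s$ in base $p$, is precisely the quantity that jumps by one exactly at the transitions $s = \lfloor(p-1)\cdots(p-1)\rfloor_p \mapsto s+1 = p^{k_s+1}$, which matches the period behaviour.

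First I would establish the base case $s=1$: here $k_1 = 0$, the constant sequence $[c]$ has period $\tau([c]) = 1$ and $\trace([c]) = c$ with $\nu_p(c) = t$, so the additive order of $c$ in $\Z_{p^\ell}$ is $p^{\ell-t}$, and \Cref{lemma:primitiveperiod} yields $\tau(\Sigma[c]) = p^{\ell-t} = p^{\ell-t+k_1}$. For the inductive step I would assume $\tau([c]^s) = p^{\ell-t+k_s}$ and split into the two cases supplied by \Cref{lemma:technical}, applied with $k = k_s$ so that the summation range $p^{\ell-t+k_s}-1$ matches the period of $[c]^s$. If the base-$p$ digits of $s$ are not all equal to $p-1$, then $k_{s+1} = k_s$ and \Cref{lemma:technical} gives $\nu_p(\trace([c]^s)) \geq \ell$, hence $\trace([c]^s) = 0$ in $\Z_{p^\ell}$; its additive order is $1$, so \Cref{lemma:primitiveperiod} forces $\tau([c]^{s+1}) = \tau([c]^s) = p^{\ell-t+k_{s+1}}$.

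In the complementary case $s = \lfloor(p-1)\cdots(p-1)\rfloor_p$ one has $k_{s+1} = k_s + 1$, and \Cref{lemma:technical} gives $\nu_p(\trace([c]^s)) = \ell-1$. An element of $\Z_{p^\ell}$ with $p$-adic valuation $\ell-1$ has additive order exactly $p$, so \Cref{lemma:primitiveperiod} gives $\tau([c]^{s+1}) = p\cdot\tau([c]^s) = p\cdot p^{\ell-t+k_s} = p^{\ell-t+k_s+1} = p^{\ell-t+k_{s+1}}$, completing the induction.

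The only genuinely delicate point, and the main obstacle to watch, is the bookkeeping around the digit-carry: I must verify that the hypotheses of \Cref{lemma:technical} are invoked with the correct top-digit index (namely $k_s$, the index of the leading nonzero digit of $s$), that the condition ``all digits equal $p-1$'' there corresponds exactly to the case $k_{s+1} = k_s + 1$ here, and that in the first case the valuation bound $\geq\ell$ really does collapse $\trace([c]^s)$ to zero in $\Z_{p^\ell}$ rather than merely to a small element. Everything else is a routine translation between $p$-adic valuation and additive order in $\Z_{p^\ell}$, which \Cref{lemma:primitiveperiod} packages cleanly.
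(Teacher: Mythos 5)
Your proposal is correct and follows essentially the same route as the paper's own proof: induction on $s$ with base case $s=1$, using \Cref{lemma:technical} (applied with $k=k_s$ so the summation range matches the period given by the inductive hypothesis) to determine $\nu_p(\trace([c]^s))$, and \Cref{lemma:primitiveperiod} to convert that valuation into the additive order governing $\tau([c]^{s+1})$, with the same case split according to whether all base-$p$ digits of $s$ equal $p-1$. The only difference is cosmetic: you spell out explicitly the translation between valuation and additive order (valuation $\geq\ell$ gives order $1$, valuation $\ell-1$ gives order $p$), which the paper leaves implicit.
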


\begin{theorem}\label{theorem:leading_term}
\RED{If $\lead$ is the leading component of the generating vector $\vect(f)$ of a nilpotent periodic sequence $ f \in \dnil{p^\ell} $, then
\[\tau(\Sigma^s f) = \tau(\Sigma^{s+\gamma} [\lead]) \quad\text{for } s\gg 0.\]}
%re is a constant  $ \lead $ (called \emph{leading constant})  in the spectrum} of $ f $, such that $ \tau(\Sigma^s f) = \tau(\Sigma^{s+\gamma} [\lead]) $ for
%  $s\gg 0 $.
 \end{theorem}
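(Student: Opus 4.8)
The plan is to write $\Sigma^{s}f$ as a sum of primitives of constants and to pin down its period by tracking the $p$-adic valuation of successive traces, comparing the outcome against the single term $\Sigma^{s+\gamma}[\lead]$. First I would invoke \Cref{lemma:nilpotentwithconstants} to write, by linearity of $\Sigma$,
\[\Sigma^{s}f=\sum_{i=0}^{\eta-1}\Sigma^{s+i}[e^f_i],\]
and set $t:=\nu_p(\lead)=\min_i\nu_p(e^f_i)$, so that $\gamma=\max\{i:\nu_p(e^f_i)=t\}$. Here $\eta$ is the nilpotency index of $f$ and I write $k_m$ for the index of the top nonzero base-$p$ digit of $m$.

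By \Cref{prop:periodconstants} the $i$-th summand has period $p^{\ell-\nu_p(e^f_i)+k_{s+i}}$. Comparing exponents establishes the upper bound: for $i\le\gamma$ one has $\nu_p(e^f_i)\ge t$ and $k_{s+i}\le k_{s+\gamma}$, while for $i>\gamma$ one has $\nu_p(e^f_i)\ge t+1$ and $k_{s+i}-k_{s+\gamma}\le 1$ once $s$ is large enough that the window $[s+\gamma,s+i]$ meets at most one power of $p$; hence for $s\gg0$ the least common multiple of the summand periods equals $p^{\ell-t+k_{s+\gamma}}$. Since the period of a sum of $p$-power-periodic sequences divides the lcm of the individual periods, this yields $\tau(\Sigma^{s}f)\mid p^{\ell-t+k_{s+\gamma}}=\tau(\Sigma^{s+\gamma}[\lead])$, the last equality again by \Cref{prop:periodconstants}.

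For the matching lower bound I would mimic the strategy of \Cref{prop:periodconstants} and read the period off \Cref{lemma:primitiveperiod}, which ties $\tau(\Sigma^{s+1}g)$ to $\tau(\Sigma^{s}g)$ through the additive order of $\trace(\Sigma^{s}g)$. Writing $K_s:=\ell-t+k_{s+\gamma}$ and $\tau(\Sigma^{s}f)=p^{a_s}$ (legitimate since $\Sigma^{s}f$ is nilpotent, by \Cref{theorem:nilpotent_period}), the upper bound makes $p^{K_s}$ a multiple of $\tau(\Sigma^{s}f)$, so the windowed sum $W_s:=\sum_{n=0}^{p^{K_s}-1}\Sigma^{s}f(n)$ equals $p^{K_s-a_s}\trace(\Sigma^{s}f)$. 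By the hockey-stick identity and \Cref{lemma:binomialconstants} one has $W_s=\sum_i e^f_i\binom{p^{K_s}}{s+i+1}$, and Kummer's Theorem gives $\nu_p\binom{p^{K_s}}{j}=K_s-\nu_p(j)$ for $1\le j\le p^{K_s}$. A summand can have valuation $<\ell$ only if $\nu_p(s+i+1)>\nu_p(e^f_i)-t+k_{s+\gamma}$; for $s\gg0$ this forces $s+i+1$ to be a power of $p$ with $\nu_p(e^f_i)=t$, and since every index of minimal valuation is $\le\gamma$, it forces $i=\gamma$ and $s+\gamma+1=p^{k_{s+\gamma}+1}$. Thus $\nu_p(W_s)=\ell-1$ exactly when $s+\gamma+1$ is a power of $p$, and $W_s\equiv0$ otherwise.

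Finally I would close without a separate base case. At a jump time $s^{\ast}$ with $s^{\ast}+\gamma+1=p^{k+1}$, the directly computed value $\nu_p(W_{s^{\ast}})=\ell-1$ together with $W_{s^{\ast}}=p^{K_{s^{\ast}}-a_{s^{\ast}}}\trace(\Sigma^{s^{\ast}}f)$ forces $a_{s^{\ast}+1}=K_{s^{\ast}}+1=K_{s^{\ast}+1}$ irrespective of $a_{s^{\ast}}$; between consecutive jump times $K_s$ is constant, and the monotonicity $\tau(\Sigma^{s}f)\mid\tau(\Sigma^{s+1}f)$ (from $\Delta\Sigma^{s+1}f=\Sigma^{s}f$) combined with the upper bound propagates $a_s=K_s$. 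Iterating past the first large power of $p$ gives $\tau(\Sigma^{s}f)=p^{K_s}=\tau(\Sigma^{s+\gamma}[\lead])$ for $s\gg0$. The main obstacle is the valuation analysis of $W_s$ near powers of $p$, where a priori several minimal-valuation summands could interfere; the decisive point is that the leading component is the \emph{last} entry of minimal valuation, which isolates a single nonvanishing contribution and synchronises the period jumps of $\Sigma^{s}f$ with those of $\Sigma^{s+\gamma}[\lead]$.
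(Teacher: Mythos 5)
Your proof is correct, but the way you force the lower bound is genuinely different from the paper's. The shared skeleton: both arguments write $\Sigma^s f=\sum_{i=0}^{\eta-1}\Sigma^{s+i}[e^f_i]$ via \Cref{lemma:nilpotentwithconstants}, read each summand's period off \Cref{prop:periodconstants}, and obtain the upper bound $\tau(\Sigma^s f)\mid p^{\ell-t+k_{s+\gamma}}$ from the lcm of those periods. For the reverse inequality the paper anchors at the special exponents $s=p^k-\gamma$: there the summand $\Sigma^{p^k}[\lead]$ has period $p^{\ell-t+k}$ while every other summand's period divides $p^{\ell-t+k-1}$, so the sum has period exactly $p^{\ell-t+k}$, and monotonicity of the period under $\Sigma$ (from \Cref{lemma:primitiveperiod}) carries this lower bound across the whole range $p^k-\gamma\le s<p^{k+1}-\gamma$. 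You never use such anchor points: instead you analyse the trace of $\Sigma^s f$ itself, so your Kummer computation of $W_s=\sum_i e^f_i\binom{p^{K_s}}{s+i+1}$ is in effect \Cref{lemma:technical} generalised from a single constant to the whole nilpotent sequence, and your induction through \Cref{lemma:primitiveperiod} replays the proof of \Cref{prop:periodconstants} directly on $f$ rather than summand by summand. Your route yields finer information (the exact valuation of $\trace(\Sigma^s f)$ and the exact jump times $s+\gamma+1=p^{k+1}$, synchronised with those of $\Sigma^{s+\gamma}[\lead]$) plus a pleasant self-correcting feature: at a jump, $a_{s^{\ast}+1}=K_{s^{\ast}}+1$ whatever $a_{s^{\ast}}$ was, so no base case is needed beyond reaching the first large power of $p$. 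The paper's route is shorter given its toolkit (it uses \Cref{prop:periodconstants} as a black box, with no further binomial computation) and it produces the explicit threshold $s\ge p^{\mu}-\gamma$, with $\mu$ minimal such that $\eta-\gamma-1<p^{\mu}(p-1)$, which \Cref{rem:limiteinferiore} and the later examples rely on; your threshold is correct but left implicit. One detail you should spell out: when $\ell-t=1$ and $s$ sits at a jump, the indices $s+i+1$ with $i>\gamma$ exceed $p^{K_s}$, so those binomials vanish identically as integers; this is exactly why only the $i=\gamma$ term survives there, and with that remark your isolation of a single nonvanishing contribution is airtight.
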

		\begin{proof}
			If \RED{$\vect(f)=(e_0,\dots, e_{\eta-1}) $, by \Cref{lemma:nilpotentwithconstants}
			\[ f = \sum_{i=0}^{\eta-1} [e_i]^{i} =  \sum_{i=0}^{\eta-1} \Sigma^i [e_i]. \]
If $\lead$ is the leading component, then $\nu_p(\lead)\leq \nu_p(e_i)$, $0\leq i<\gamma$, and 
$\nu_p(\lead)< \nu_p(e_i)$, $\gamma<i< \eta$.}
%			Set $ m_i =\nu_p(e_i)$,
%			%, we denote by $ m_i $ the \(p\)-adic valuation of \(e_{i}\).
%			$ m = \min \{ m_i\} $ and 
%			$ \gamma = \max \{ 0 \le i \le \eta-1 \mid m_i  = m \}. $
	Let us prove that  $ \tau(\Sigma^s f) = \tau(\Sigma^{s+\gamma} [\lead]) $ for $ s\gg 0 $.
%The sequence $[\lead]^{\gamma} = \Sigma^{\gamma}[e_{\gamma}]$ the \emph{leading term} of $f$.
Let \(\mu\) be the minimal natural number such that \(\eta - \gamma -1 < p^{\mu}(p-1).\)
			Notice that for any \(k \ge \mu\) both $ p^k $ and $ p^k +\eta - \gamma-1 $ are strictly less than $p^{k+1}$ and hence they have the same number of digits in base $ p $.
 In order to conclude the proof, we show that for any $k \ge \mu $ one has:
\[ \tau( f^{s}) = \tau ( [\lead]^{s+\gamma}) \qquad \forall  \, p^k- \gamma \le s < p^{k+1}- \gamma  \]
hence the statement holds for any $s\geq p^{\mu}- \gamma$.
	
	For \(s = p^{k}- \gamma \) we have:
	\begin{align*}
	f^{p^k - \gamma}  = [e_{0}]^{p^k - \gamma} + [e_{1}]^{1 + p^k - \gamma} + \dots + [e_{\gamma}]^{\gamma + p^k- \gamma} + \cdots + [e_{\eta-1}]^{\eta - 1 + p^k - \gamma}.
    \end{align*}
	By  Proposition~\ref{prop:periodconstants}, $ [e_{ \gamma}]^{p^k} $ has period $p^{\ell-\nu(\lead)+k} $.
The other summands have period strictly dividing \(p^{ \ell -\nu(\lead)+k}\):
\begin{itemize}
  \item For every $ \gamma < i < \eta $, \( p^k +i - \gamma < p^{k+1} \)  by construction and so $ p^k +i - \gamma $ has $ k+1 $ digits in base $ p $, hence the period of $[e_i]^{p^k +i - \gamma }   $ is $ p^{\ell-\nu_p(e_i)+k} \mid  p^{\ell-\nu_p(\lead)-1+k}
 $ (since $\nu_p(e_i)>\nu_p(\lead)$).
  \item For every $ 0 \le i < \gamma $, $ \nu_p(\lead) \le \nu_p(e_i) $ and $  p^k +i -\gamma < p^k $; hence \(p^{k}+i-\gamma\) has at most $ k $ digits in base $ p $. Thus the period of $ [e_i]^{p^k +i -\gamma }  $ is a divisor of $ p^{\ell-\nu_p(e_i) +k-1}$ and so it divides $p^{\ell-\nu_p(\lead )+k-1} $.
\end{itemize}
Thus the period \(\tau( f^{p^k-\gamma} )\) is equal to  \(p^{\ell -\nu_p(\lead) +k} \).

For $ p^k-\gamma < s < p^{k+1}-\gamma $,  the period of \( [e_\gamma]^{s+\gamma} \) is \(p^{\ell -\nu_p(\lead)+k}\), and by Lemma~\ref{lemma:primitiveperiod} $\tau(f^s) \ge \tau(f^{p^k-\gamma})= p^{\ell -\nu_p(\lead)+k} $.
Furthermore, since \(p^{k+1}+\eta-\gamma-1<p^{k+2}\) we have
\[p^k-\gamma< s\leq s+\eta-1<p^{k+1}-\gamma+\eta-1< p^{k+2}.\]
Then
\(\tau([e_i]^{s+i})\leq p^{\ell -(\nu_p(\lead)+1)+k+1} \) for \(\gamma< i\leq\eta-1\), and \(\tau( [e_i]^{s+i})\leq p^{\ell -\nu_p(\lead)+k} \) for \(0\leq i\leq {\RED{\gamma}}-1\).
Thus
\(\tau(f^s) \le p^{\ell -\nu_p(\lead)+k} \), and hence \(\tau( f^s) = p^{\ell -\nu_p(\lead)+k} \).
\end{proof}

\begin{corollary}\label{rem:infinito}
\RED{Denoted by $\lead$ the leading component of the generating vector of $ f \in \dnil{p^\ell} $, one has that for $t\gg 0$}
\[\tau\Big(\sum_{i=0}^t f^{i}\Big)=\tau\Big([e_\gamma]^{t+\gamma}\Big).\]
\end{corollary}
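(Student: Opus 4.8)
The plan is to sidestep any direct computation of the period of the partial sum $g_t := \sum_{i=0}^{t} f^{i}$ — a computation that would force the same delicate base-$p$ digit bookkeeping as in \Cref{theorem:leading_term} — and instead to recover $\tau(g_t)$ from two elementary algebraic identities together with divisibility among powers of $p$. Throughout, $\gamma$ denotes the index of the leading component $\lead = e_\gamma$. I would first record that, since $f$ is nilpotent, so is every primitive $f^{i} = \Sigma^{i} f$ (because $\Delta^{i}\Sigma^{i} = \id$, whence $\Delta^{\eta+i} f^{i} = 0$); hence $g_t \in \dnil{p^\ell}$, and by \Cref{theorem:nilpotent_period} every period occurring below is a power of $p$, so that each least common multiple is simply a maximum.

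First I would establish the upper bound $\tau(g_t) \mid \tau(f^{t})$. By \Cref{lemma:primitiveperiod} one has $\tau(f^{i}) \mid \tau(f^{i+1})$ for every $i$, so the periods $\tau(f^{0}), \tau(f^{1}), \dots$ form an ascending chain and $\tau(g_t) \mid \lcm\{\tau(f^{i}) : 0 \le i \le t\} = \tau(f^{t})$. By \Cref{theorem:leading_term} we have $\tau(f^{t}) = \tau([\lead]^{t+\gamma})$ for $t \gg 0$, so $\tau(g_t) \mid \tau([\lead]^{t+\gamma})$; applying the same chain to $t-1$ gives moreover $\tau(g_{t-1}) \mid \tau(f^{t-1}) \mid \tau(f^{t})$.

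For the reverse divisibility the key is two identities. The defining telescoping $g_t - g_{t-1} = f^{t}$ yields $\tau(f^{t}) \mid \max(\tau(g_t), \tau(g_{t-1}))$; and applying $\Delta$ to $g_t$ with $\Delta\Sigma = \id$ gives $\Delta g_t = g_{t-1} + \Delta f$, that is $g_{t-1} = \Delta g_t - \Delta f$, whence $\tau(g_{t-1}) \mid \max(\tau(g_t), \tau(f))$ (using $\tau(\Delta g_t) \mid \tau(g_t)$ and $\tau(\Delta f) \mid \tau(f)$). Since $\tau([\lead]^{t+\gamma}) \to \infty$ with $t$ by \Cref{prop:periodconstants} while $\tau(f)$ is a fixed power of $p$, for $t \gg 0$ we have $\tau(f^{t}) = \tau([\lead]^{t+\gamma}) > \tau(f)$. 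Writing $P = \tau(f^{t})$, $x = \tau(g_t)$, $y = \tau(g_{t-1})$ — all powers of $p$ with $x, y \mid P$ — the relation $P \mid \max(x,y)$ forces $\max(x,y) = P$. If $x < P$ then $y = P$, so $P = y \mid \max(x, \tau(f))$; but $x \le P/p$ and $\tau(f) < P$ give $\max(x,\tau(f)) < P$, a contradiction. Hence $x = P$, i.e.
\[ \tau\Big(\sum_{i=0}^{t} f^{i}\Big) = \tau([\lead]^{t+\gamma}) = \tau([e_\gamma]^{t+\gamma}) \quad\text{for } t\gg 0. \]

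The hard part, and the reason for routing through $\Delta$ rather than attacking $\tau(g_t)$ head-on, is exactly this lower bound. A naive dominant-term analysis of $g_t = \sum_{j}\sum_{i}[e_j]^{i+j}$ (via \Cref{lemma:nilpotentwithconstants}) breaks down near the top of each plateau $p^{k}-\gamma \le t < p^{k+1}-\gamma$, where a subleading constant $[e_j]$ with $\nu_p(e_j) = \nu_p(\lead)+1$ can momentarily match the period of the dominant term $[\lead]^{t+\gamma}$, so that cancellation cannot be excluded by inspection. The identity $g_{t-1} = \Delta g_t - \Delta f$ circumvents this completely by trading the period computation for the fixed, strictly smaller quantity $\tau(f)$, which is why the conclusion holds uniformly for all sufficiently large $t$.
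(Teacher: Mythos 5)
Your proof is correct, and it takes a genuinely different route from the paper, which in fact offers no argument for this corollary at all: it is stated as an immediate consequence of \Cref{theorem:leading_term}. The reading most faithful to the paper is to apply \Cref{theorem:leading_term} directly to the sum $g_t=\sum_{i=0}^{t}f^{i}$ itself: by \Cref{lemma:nilpotentwithconstants} one has $g_t=\sum_{m=0}^{t+\eta-1}\Sigma^{m}[c_m]$ with $c_m=\sum_{j}e_j$, the sum running over the window $\max(0,m-t)\le j\le\min(m,\eta-1)$, so $g_t$ is nilpotent with leading component $c_{t+\gamma}=e_\gamma+\dots+e_{\eta-1}$, which sits at index $t+\gamma$ and has the same $p$-adic valuation as $\lead$; the theorem (applied with $s=0$, legitimate for $t\ge p^{\mu}-\gamma$ by \Cref{rem:limiteinferiore}) together with \Cref{prop:periodconstants} then gives $\tau(g_t)=\tau([c_{t+\gamma}]^{t+\gamma})=\tau([\lead]^{t+\gamma})$. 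You instead treat \Cref{theorem:leading_term} as a black box and recover the lower bound from the two telescoping identities $g_t-g_{t-1}=f^{t}$ and $g_{t-1}=\Delta g_t-\Delta f$, plus arithmetic of powers of $p$; all the individual steps (nilpotency of $g_t$, the divisibility chain $\tau(f^{i})\mid\tau(f^{i+1})$ from \Cref{lemma:primitiveperiod}, the contradiction forcing $\tau(g_t)=\tau(f^{t})$) check out. Your diagnosis of the difficulty is also accurate: since $g_t$ is not a primitive of $g_{t-1}$, \Cref{lemma:primitiveperiod} gives no monotonicity of $\tau(g_t)$ in $t$, so the plateau-propagation step in the proof of \Cref{theorem:leading_term} cannot simply be copied, and your second identity is exactly what replaces it. Your route buys two things: it never reopens the base-$p$ digit bookkeeping, and it extends verbatim to sums over arithmetic progressions $\sum_{i}f^{i\eta+r}$ (replace the second identity by $g_{t-1}=\Delta^{\eta}g_t-\Delta^{\eta-r}f$), which is the form in which the corollary is actually invoked in the proof of \Cref{idem_constants}. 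What the paper's implicit route buys is a sharper explicit threshold, namely $t\ge p^{\mu}-\gamma$, whereas your argument additionally requires $\tau([\lead]^{t+\gamma})>\tau(f)$; both thresholds are effective, so this difference is minor.
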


\begin{remark}\label{rem:limiteinferiore}
In the proof of \Cref{theorem:leading_term} we computed explicitly how big $s$ has to be for the statement to hold. Precisely we have $s\geq p^{\mu}-\gamma$ where $\mu$ is minimal with respect to
$\eta-\gamma-1<p^{\mu}(p-1)$. In particular if $\gamma=\eta-1$, the condition becomes $s\geq 0$.
\end{remark}

\begin{example}
 Let us consider the  sequence
$ V_2 = [2,1,2,0,0,1,0,0] \in \dnil{4}
$ from \Cref{example:vieru}.
It has nilpotency index 5 and \RED{$\vect(V_2)=(2,3,2,3,2)$. Then}
  \[ V_2 = [2] + \Sigma [3] + \Sigma^2 [2] + \Sigma^3 [3] + \Sigma^4 [2].\]
The leading component is  $ e_3=3 $ and by \Cref{theorem:leading_term}
%, as $3$ has $2$-valuation equal to 0 (the minimal among the considered constants) and $ \Sigma^3 [3] $ has the highest primitive index.
\RED{one has $\tau(\Sigma^s V_{2}) = \tau(\Sigma^{s+3}[3])$ for  $s \gg 0$. By \Cref{rem:limiteinferiore} one easily checks that for any $s\geq 0$, if $ s + 3 = \lfloor a_k \cdots a_1 a_0 \rfloor_2$, $a_k>0$, } we have
$\tau(\Sigma^s V_2) =  \tau(\Sigma^{s+3}[3])=2^{2+k}.$
\end{example}

\begin{corollary}
  With the notation of the theorem above, if \(\ell = 1\), i.e. considering sequences on the finite field \( \Z_{p}\), the leading component of a nilpotent sequence is always \(e_{\eta-1}\) and it leads the period for any \(s \ge 0\).
  Hence \(\tau(\Sigma^s f) = \tau(\Sigma^{s+ \eta-1} [e_{\eta-1}])\) for any \(s \ge 0\).
\end{corollary}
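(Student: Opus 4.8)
The plan is to reduce the whole statement to \Cref{theorem:leading_term} by first identifying the leading component explicitly in the field case. The crucial observation is that over $\Z_p$ the $p$-adic valuation $\nu_p$ takes only the value $0$ on nonzero elements and $\infty$ on $0$, so ``minimal valuation'' is synonymous with ``nonzero''.

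First I would recall, exactly as established in the proof of \Cref{vectno0}, that a nonzero nilpotent sequence $f \in \dnil{p^\ell}$ of nilpotency index $\eta \ge 1$ has $\Delta^{\eta-1} f$ equal to a nonzero constant sequence: indeed $\Delta^\eta f = 0$ forces $\Delta^{\eta-1} f$ to be constant, and minimality of $\eta$ forces it to be nonzero. Consequently the last entry $e_{\eta-1} = \Delta^{\eta-1} f(0)$ of the generating vector $\vect(f)$ is nonzero. When $\ell = 1$ this means $\nu_p(e_{\eta-1}) = 0$, which is the minimal possible value of $\nu_p$ on $\Z_p$. Hence, by \Cref{def:genvec}, the last entry of $\vect(f)$ attaining the minimal $p$-adic valuation is $e_{\eta-1}$ itself, so the leading component is $e_{\eta-1}$, i.e. $\gamma = \eta - 1$.

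Next I would simply invoke \Cref{theorem:leading_term} with $\gamma = \eta - 1$, which gives $\tau(\Sigma^s f) = \tau(\Sigma^{s+\eta-1}[e_{\eta-1}])$ for $s \gg 0$. To upgrade this to all $s \ge 0$, I would appeal to \Cref{rem:limiteinferiore}: the threshold computed there is $s \ge p^\mu - \gamma$, where $\mu$ is minimal satisfying $\eta - \gamma - 1 < p^\mu(p-1)$. Substituting $\gamma = \eta - 1$ gives $\eta - \gamma - 1 = 0 < p^0(p-1) = p-1$, so $\mu = 0$ is admissible and the threshold collapses to $s \ge 0$.

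I do not anticipate a genuine obstacle here: the entire content is the field-specific remark that all nonzero elements share the minimal valuation $0$, which pins the leading component to the last coordinate of $\vect(f)$, after which the theorem and the explicit threshold do the rest. The only point needing a word of care is the degenerate case $\eta = 1$, where $f = [e_0]$ is itself constant and the asserted identity $\tau(\Sigma^s f) = \tau(\Sigma^{s}[e_0])$ holds tautologically for every $s \ge 0$; this is consistent with, and not obstructed by, the threshold discussion above.
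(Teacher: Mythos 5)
Your proposal is correct and follows exactly the route the paper intends: the paper states this corollary without proof as an immediate consequence of \Cref{theorem:leading_term} and \Cref{rem:limiteinferiore}, and your argument---observing that over $\Z_p$ every nonzero element has valuation $0$, that $e_{\eta-1}\neq 0$ by the proof of \Cref{vectno0}, hence $\gamma=\eta-1$, and then reading off the threshold $s\geq p^{0}-\gamma$---is precisely that intended deduction. Your separate handling of the degenerate case $\eta=1$ (where the identity is tautological, since the formal threshold would be $s\geq 1$) is a careful touch that the paper's remark glosses over.
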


In the following theorem we show that constant sequences and their primitives \RED{have a key role also studying} the primitives of idempotent sequences. For $z\in\mathbb Z$ and $0<\eta
\in\N$, denote by $0\leq \bar{z}<\eta$ the remainder in the division of $z$ by $\eta$. 

\begin{theorem}\label{idem_constants}
Consider $ f \in \didem{p^{\ell}} $ with idempotency index \( \eta \) and generating vector \( \vect(f)=(e_0,..., e_{\eta-1})$. 
%Defined $e_{i}:= e_{\bar{i}}$, where $\bar{i}$ is the remainder in the division of $i$ by $\eta$. 
%
%
%If $\bar{i} $ is the remainder For any $\lambda \in \N$, denote $i = h \eta + \bar{i} $ with $0 \le \bar{i}<\eta$ and define $e_{i}:= e_{\bar{i}}$.
For every $s \ge 1$,one has:
\begin{equation*}
\Sigma^{s}f = \Delta^{\overline{- s}}f - \sum_{j=0}^{s-1} \Sigma^{j}[e_{\overline{j-s}}].
\end{equation*}
This provides the explicit decomposition in idempotent and nilpotent part. Moreover if $\lead$ is the leading component of $\vect (f)$, one has
\[\tau(\Sigma^s f) =\lcm\left(\tau(f), \tau(\Sigma^{s-\eta+\gamma} [\lead])\right) \quad \forall s\gg 0.\]
%for $s>>0$
%In particular there exist a constant $ \lead \in\Spec(f)$ (called leading constant) and a natural number $1 \leq w \leq \eta$  such that $ \tau(\Sigma^s f) =\lcm\left(\tau(f), \tau(\Sigma^{s-w} [\lead])\right) $ for $ s \gg 0 $.

\begin{proof}
%Observe that, by definition, the constants $e_{i}$ and $e_{j}$ coincide if $i\equiv_{\eta} j$.
We proceed by induction on $ s $.
\begin{itemize}
\item For $ s  = 1 $ one has %$t=0, r=1$ and
$
\Sigma f = \Sigma (\Delta^\eta f) = \Delta^{\eta -1} f - [e_{\eta-1}]=\Delta^{\overline{-1}}f-[e_{\overline{-1}}]$, and hence the thesis.
%\end{align*}
\item Suppose that the statement is true for $1\leq s = t\eta+\bar s $, $t\geq 0$; let us prove it for $ s + 1 = t'\eta+\overline{s+1}$.
Notice that $(t',\overline{s+1})=(t+1,0)$ if $\bar s= \eta-1$ and $(t',\overline{s+1})=(t,\bar s+1)$ otherwise. By inductive hypothesis we have:
\begin{align*}
 f^{s+1}= \Sigma(f^{s}) = & \Sigma (\Delta^{\overline{-s}}f - \sum_{j=0}^{s-1} [e_{\overline{j-s}}]^{j}) \\
= & \Delta^{\overline{-s-1}}f - [e_{\overline{-s-1}}] - \sum_{j=1}^{s} [e_{\overline{j-1-s}}]^{j} \\
= & \Delta^{\overline{-(s+1)}}f - \sum_{j=0}^{s} [e_{\overline{j-(s+1)}}]^{j}.
\end{align*}
%
%=====
%
%\begin{align*}
% f^{s+1}= \Sigma(f^{s}) = & \Sigma (\Delta^{\eta-\bar s}f - \sum_{j=0}^{s-1} [e_{\eta-\bar s+j}]^{j}) \\
%= & \Delta^{\eta-\bar s-1}f - [e_{\eta-\bar s-1}] - \sum_{j=1}^{s} [e_{\eta-\bar s-1+j}]^{j} \\
%= & \Delta^{\eta-\overline{s+1}}f - \sum_{j=0}^{s} [e_{\eta-\overline{s+1}+j}]^{j}.
%\end{align*}
\end{itemize}
\noindent This proves the first part of the statement.

For the period, firstly we have $\tau\left(\Delta^{j} f \right)=\tau(f)$ for any $j\in \N$ (since $f\in \didem{p^\ell}$).
Now let us denote by $g$ the nilpotent sequence $\sum_{j=0}^{\eta-1}  [e_{j}]^{j}$. The nilpotency index of $g$ is $\eta_g:=\max\{j:e_j\not=0\}+1$; clearly $\eta_g\leq \eta$, but the leading component of $\vect(g)$ and $\vect(f)$ is the same: $[\lead]^\gamma$. Notice that for any $s \ge \eta$:
\[ \sum_{j=0}^{s-1} [e_{\overline{j-s}}]^{j}=\sum_{0\leq j<\overline{s}} [e_{\overline{j-s}}]^{j} + \sum_{j=\bar s}^{s-1} [e_{\overline{j-s}}]^{j}=\sum_{0\leq j<\overline{s}} [e_{\overline{j-s}}]^{j} +\sum_{i=0}^{t-1}  g^{i\eta+\bar s}. \]
By \Cref{rem:infinito}, for $s \gg 0$ one has:
\[ \tau \left( \sum_{j=0}^{s-1} [e_{\overline{j-s}}]^{j} \right) =
\tau \left( \sum_{i=0}^{t-1}  g^{i \eta + \bar s} \right)=\tau \left( [\lead]^{(t-1)\eta + \bar s + \gamma} \right) =\tau\left([\lead]^{s-\eta+\gamma}\right).\]
%Then, setting $w:= \eta - \gamma$ we have 
%\[\tau\left(\Sigma^s f\right)=\lcm \left(\tau(f), \tau\left(\Sigma^{s-w}[e_{\gamma}] \right)\right).\]
\end{proof}
\end{theorem}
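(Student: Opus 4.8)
The plan is to prove the two assertions of \Cref{idem_constants} separately: first the explicit decomposition formula by induction on $s$, and then the period formula as a consequence of the decomposition together with the results already established for nilpotent sequences.

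For the decomposition formula, I would proceed by induction on $s$, exactly as the structure of the theorem suggests. The base case $s=1$ follows directly from the defining relation $\Sigma f = \Sigma(\Delta^\eta f) = \Delta^{\eta-1} f - [e_{\eta-1}]$, using $\Sigma \Delta g = g - [g(0)]$ together with idempotency $f = \Delta^\eta f$. For the inductive step I would apply $\Sigma$ to the formula for $f^s$, using linearity of $\Sigma$. The key bookkeeping is in the index shifting: applying $\Sigma$ turns $\Delta^{\overline{-s}} f$ into $\Sigma \Delta^{\overline{-s}} f = \Delta^{\overline{-s}-1} f - [e_{\overline{-s}-1}]$ when $\overline{-s}\ge 1$ (and invokes idempotency to wrap around when $\overline{-s}=0$), while each primitive $[e_{\overline{j-s}}]^j$ becomes $[e_{\overline{j-s}}]^{j+1}$, i.e. a reindexing $j \mapsto j+1$ of the summation. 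The main care is in checking that the residues $\overline{-s-1}$ and $\overline{j-1-s}$ align correctly under the reindexing; this is purely a modular-arithmetic verification and should present no essential difficulty. Since $\Delta^{\overline{-s}}f$ is idempotent and the sum of primitives of constants is nilpotent by \Cref{lemma:nilpotentwithconstants}, this formula is automatically the unique decomposition guaranteed by \Cref{lemma:antifitting}.

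For the period formula, the strategy is to use $\tau(\Sigma^s f) = \lcm\{\tau(f_I), \tau(f_N)\}$ from \Cref{lemma:antifitting}, where $f_I = \Delta^{\overline{-s}} f$ has period $\tau(f)$ (since $\tau(\Delta^j f) = \tau(f)$ for idempotent $f$), and $f_N = -\sum_{j=0}^{s-1}[e_{\overline{j-s}}]^j$ is the nilpotent part. So it remains to compute $\tau(f_N)$ for $s \gg 0$. I would introduce the auxiliary nilpotent sequence $g := \sum_{j=0}^{\eta-1}[e_j]^j$, whose generating vector shares the same leading component $e_\gamma$ with $f$, and split the sum defining $f_N$ (for $s \ge \eta$) into a fixed ``tail'' of at most $\eta$ terms plus a block $\sum_{i=0}^{t-1} g^{i\eta + \bar s}$ of shifted primitives of $g$. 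The point is that a sum of consecutive primitives of a fixed nilpotent sequence is precisely the object handled by \Cref{rem:infinito}, which tells us $\tau(\sum_{i=0}^{t-1} g^{i\eta+\bar s}) = \tau([e_\gamma]^{(t-1)\eta+\bar s+\gamma})$ for large $t$. Rewriting the exponent $(t-1)\eta + \bar s + \gamma = s - \eta + \gamma$ then yields the claimed period for $f_N$.

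The main obstacle, I expect, is the regrouping of the sum $\sum_{j=0}^{s-1}[e_{\overline{j-s}}]^j$ into the form $\sum_{i=0}^{t-1} g^{i\eta+\bar s}$ plus a bounded remainder, and confirming that the low-order ``tail'' terms do not affect the asymptotic period. One must verify that each full block of $\eta$ consecutive indices $j$ reassembles exactly into a shifted copy $g^{i\eta+\bar s} = \Sigma^{i\eta+\bar s} g$, using the periodicity $\overline{j-s}$ of the coefficient pattern with period $\eta$ and the identity $\Sigma^{i\eta+\bar s}\big(\sum_{r=0}^{\eta-1}[e_r]^r\big) = \sum_{r=0}^{\eta-1}[e_r]^{r+i\eta+\bar s}$. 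Once this combinatorial identification is in place, invoking \Cref{rem:infinito} is immediate, and the tail terms contribute primitives of lower or equal order whose periods already divide $\tau([e_\gamma]^{s-\eta+\gamma})$ for $s$ large, so they are absorbed into the lcm. The final step is then simply $\tau(\Sigma^s f) = \lcm(\tau(f), \tau([e_\gamma]^{s-\eta+\gamma}))$, as claimed.
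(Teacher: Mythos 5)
Your proposal is correct and follows essentially the same route as the paper's own proof: induction on $s$ for the decomposition formula, then the auxiliary nilpotent sequence $g=\sum_{j=0}^{\eta-1}[e_j]^j$, the regrouping of $\sum_{j=0}^{s-1}[e_{\overline{j-s}}]^j$ into blocks $g^{i\eta+\bar s}$ plus a bounded tail, and \Cref{rem:infinito} to identify the period with $\tau([\lead]^{s-\eta+\gamma})$. The points you flag as needing care---the wrap-around case $\overline{-s}=0$ in the induction and the absorption of the tail terms for $s\gg 0$---are exactly the details the paper leaves implicit, and your treatment of them is sound.
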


\begin{remark}\label{rem:limiteinferioreidem}
In the proof of \Cref{idem_constants}, to make the statement relative to the period true, we did two assumptions on $s$. First $s$ has to be greater or equal than the idempotency index $\eta$, second $(t-1)\eta+\overline s=s-\eta$ has to be greater or equal than $p^{\mu}-\gamma$ where $\mu$ is minimal with respect to
$\eta-\gamma-1<p^{\mu}(p-1)$ to have he possibility to apply \Cref{theorem:leading_term} as observed in \Cref{rem:limiteinferiore}.
\end{remark}

\begin{example}
Consider the sequence $f = [1,3,0] \in \per{4}$. It is idempotent of index $\eta = 6$ and $\vect(f)=(1,2,3,1,0,1)$.
% With the previous notation one has:
%\begin{align*}
%	e_{0} =  1 \qquad \qquad	e_{1} =  2 \\
%	e_{2} =  3 \qquad \qquad	e_{3} =  1 \\
%	e_{4} =  0 \qquad \qquad  e_{5} =  1.
%	\end{align*}
The leading constant is $e_5=1$. By  \Cref{idem_constants} for $ s =8 $, one has:
\[\Sigma^8 f=\Delta^{\overline{-8}}f-\sum_{j=0}^7\Sigma^j[e_{\overline{j-8}}]=\Delta^{4}f-\sum_{j=0}^7\Sigma^j[e_{\overline{j+4}}].\]
Indeed $\Sigma^8 f=[0,
 0,
 0,
 0,
 0,
 0,
 0,
 0,
 1,
 3,
 0,
 1,
 1,
 2,
 1,
 1,
 1,
 2,
 1,
 1,
 2,
 1,
 1,
 2,
 2,
 0,
 2,
 2,
 2,
 0,$\\
 $
 2,
 2,
 3,
 3,
 2,
 3,
 3,
 2,
 3,
 3,
 3,
 2,
 3,
 3,
 0,
 1,
 3,
 0]$, $\Delta^{4}f=[0,1,3]$, and 
\[\sum_{j=0}^7\Sigma^j[e_{\overline{j+4}}]
%
%[1]^1+[1]^2+[2]^3+[3]^4+[1]^5+[1]^7=[0,1,2,3]+[0, 0, 1, 3, 2, 2, 3, 1]+\]\[+[0, 0, 0, 2]+[0, 0, 0, 0, 3, 3, 1, 1, 2, 2, 2, 2, 1, 1, 3, 3]+\]\[+[0, 0, 0, 0, 0, 1, 2, 1, 0, 2, 0, 2, 0, 3, 2, 3]+[0, 0, 0, 0, 0, 0, 0, 1, 0, 0, 0, 2, 0, 0, 0, 3]=\]
%\[
=[0,
1,
3,
0,
1,
3,
0,
1,
2,
1,
1,
2,
3,
3,
2,
3
].\]
Finally, by \Cref{rem:limiteinferioreidem} if $s\geq \eta=6$, and $s-\eta=s-6\geq 2^0-5=-4$, i.e., globally, $s\geq 6$, we have 
\begin{align*}
\tau(\Sigma^s f) = \mathrm{lcm} \left( \tau(f), \tau(\Sigma^{s-1}[ 1 ]) \right)	= 3 \cdot 2^{2+k}
\end{align*}
where $s-1=\lfloor 1a_{k-1}\cdots a_0\rfloor_2$. In particular for $s=8$ we have \[\tau(\Sigma^8 f)=\mathrm{lcm} \left( \tau(f), \tau(\Sigma^{7}[ 1 ]) \right)=\mathrm{lcm} \left( 3, 16) \right)=3\cdot 2^{2+2}=48.\]
\end{example}

%\section{Recursive formula for binomials coefficients in \(\Z_{m}\)} \label{section:recursive_formula}
%
%In this section we are presenting a recursive formula that allows to link the $ s $-th primitive of $ [1] \in \per{p^\ell} $ to the $ s' $-th primitive of $ [1] $ with a suitable $ s'< s $, when $ s $ has some peculiarities and $\ell \geq 2$.
%In the binomial coefficients language, this translates in a formula that allows to quickly compute the function
%\begin{align*}
%\bin_s:  \N  & \longrightarrow  \Z_{p^{\ell}}\\
% n &\longmapsto \binom{n}{s} 
% \end{align*}
%for certain values of $s$ by reducing to $\mathrm {bin}_{s'}$
%with $k_{s'}:=\lfloor \log_{p} (s')\rfloor <\lfloor\log_{p}(s)\rfloor=:k_s$ (following the notation of Proposition~\ref{prop:periodconstants}).
%In computational terms this means that, for  some particular values of $s$, one can reduce the denominator of such binomial functions in a logarithmic way.
%Of course, in the general case this formula can be combined with the usual ones, which always hold but allow only a linear reduction.

\section{Recursive formula for binomials coefficients in \(\Z_{p^{\ell}}\)} \label{section:recursive_formula}
\blue{In this section, we focus on the $s$-th binomial function:
\begin{align*}
\bin_s:  \N  & \longrightarrow  \Z_{p^{\ell}}\\
 n &\longmapsto \binom{n}{s}.
 \end{align*}
As shown in previous section, this function coincides with the $s$-th primitives $\Sigma^{s}[1]$ of the constant sequence $[1] \in \per{p^{\ell}}.$
If the expression of $ s $ in base $ p $ is one of the following:
\begin{align*}
\lfloor b_k \cdots b_{k-m} & \underbracket[0.1ex]{(p-1) \cdots (p-1)}_\ell  b_{k-m-\ell-1} \cdots b_0 \rfloor_p \\
\lfloor b_k \cdots b_{k-m} & \underbracket[0.1ex]{\hspace{2mm} 0 \hspace{7mm} \cdots  \hspace{7mm} 0 \hspace{2mm} }_\ell  b_{k-m-\ell-1} \cdots b_0\rfloor_p \\
\lfloor b_k \cdots b_{k-m} & \underbracket[0.1ex]{(p-1)\; 0 \hspace{2mm} \cdots \hspace{2mm} 0 \hspace{2mm}}_{\ell} \,  b_{k-m-\ell-1} \cdots b_0\rfloor_p
\end{align*}
where $ k > \ell $ and  $ 0 \le m \le k -\ell -1$, we prove that it is possible to link the $s$-th binomial function $\bin_{s}$ to $\bin_{s'}$ where $s'$ is obtained from $s$ by removing one of the explicit coefficients in its $p$-base expression.
%In computational terms, for certain values of $s$ it is possible to reduce the denominator of binomial functions in a logarithmic way, as $k_{s'}:=\lfloor \log_{p} (s')\rfloor <\lfloor\log_{p}(s)\rfloor=:k_s$ (following the notation of Proposition~\ref{prop:periodconstants}).
Of course, in the general case this formula can be combined with the usual ones.
%, which always hold but allow only a linear reduction.
Notice that when $p=2$ and $\ell=2$, this provides a complete recursive formula for the binomial function $\bin_{s}$.
}

Firstly we need some definitions.

\begin{definition}\label{def:RA}
	Given a sequence $ f \in S_m:=\Z_{m}^\N $, a prime $ q $ and an integer $ t \ge 1 $, we call
	the $ j $-th $ q^t $-subsequence of $ f $ the element $h_j\in  \Z_{m}^{q^{t}}$ defined as
	 \[ h_j = (f(j q^t),f(j q^t + 1), \dots, f((j+1)q^t -1)) \qquad j \in \N. \]
	We denote by $ \Double(f,q^t)\in S_m $ the sequence obtained repeating $ q $ times the ordered $ q^t $-subsequences of $ f $:
	\[ \Double(f,q^t) = (\underbracket[0.1ex]{h_0, \dots, h_0}_{q}, \underbracket[0.1ex]{h_1, \dots, h_1}_{q}, \dots ). \]
We denote by $ \Alt(f,q^t) \in S_m$ the sequence obtained alternating $ (q-1)q^t $ zeros and the ordered $ q^t $-subsequences of $ f $:
	\[ \Alt(f,q^t) = (\underbracket[0.1ex]{0, \dots, 0}_{(q-1)q^t},h_0, \underbracket[0.1ex]{0, \dots, 0}_{(q-1)q^t}, h_1, \dots). \]
\end{definition}

\begin{Proposition}\label{prop:RA}
For any $f\in S_m$, $t\geq 1$, and $n'=\lfloor a_{r} \dots a_{t} a_{t-1} \dots a_{0} \rfloor_{q}$ one has
\begin{align*}
 \Double(f, q^{t}) (n) & =	f(n') \quad \text{ if } n=  \lfloor a_{r} \dots a_{t} \, \alpha \, a_{t-1} \dots a_{0} \rfloor_{q}, \; \forall \,  0 \le \alpha < q.
 \\
\Alt(f, q^{t}) (n) & =
  \begin{cases}
	f(n') \quad \text{ if } n=  \lfloor a_{r} \dots a_{t} (q-1) a_{t-1} \dots a_{0} \rfloor_{q} \\
	0 \qquad \text{ otherwise. }
  \end{cases}\end{align*}
\end{Proposition}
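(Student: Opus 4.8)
The plan is to reduce both identities to Euclidean division governed by the base-$q$ expansions. First I would record the decomposition $n' = B + A\,q^t$, where $B := \lfloor a_{t-1}\cdots a_0\rfloor_q$ is the low part (so $0 \le B < q^t$) and $A := \lfloor a_r \cdots a_t\rfloor_q$ collects the digits of weight at least $q^t$. Inserting the digit $\alpha$ at position $t$ pushes every higher digit up one place, so the associated $n$ equals $n = B + \alpha\,q^t + A\,q^{t+1}$. The whole argument rests on the elementary bound $0 \le B + \alpha\,q^t < q^{t+1}$, which holds because $B < q^t$ and $\alpha \le q-1$; this is exactly what forces $n$ into a single, well-determined block of each constructed sequence.

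For $\Double$ I would use that $\Double(f, q^t)$ is built from super-blocks of length $q^{t+1}$, the $A$-th of which consists of $q$ consecutive copies of $h_A = (f(A q^t), \dots, f((A+1)q^t - 1))$. Writing $n = A\,q^{t+1} + (B + \alpha\,q^t)$ and invoking the bound above, $n$ lies in the super-block indexed by $A$ at local offset $B + \alpha\,q^t$. Reducing this offset modulo $q^t$ --- which selects the position inside a single copy of $h_A$ --- returns $B$ regardless of $\alpha$, since $0 \le B < q^t$. Hence $\Double(f, q^t)(n) = f(A q^t + B) = f(n')$ for every $\alpha$, which is the first claim.

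For $\Alt$ I would note that each period of length $q^{t+1}$ begins with $(q-1)q^t$ zeros followed by the single block $h_A$ of length $q^t$. Again $n = A\,q^{t+1} + (B + \alpha\,q^t)$ sits in the period indexed by $A$ at local offset $B + \alpha\,q^t$, and the value is nonzero precisely when the offset reaches the block, i.e.\ when $B + \alpha\,q^t \ge (q-1)q^t$. Since $0 \le B < q^t$, this inequality is satisfied if and only if $\alpha = q-1$; in that case the offset inside $h_A$ is $B$, giving $f(A q^t + B) = f(n')$, whereas for every $\alpha < q-1$ the offset stays within the leading zeros and the value is $0$. This yields the second formula.

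The only genuinely delicate point is the index bookkeeping: one must track that inserting a digit at position $t$ replaces the high contribution $A\,q^t$ by $A\,q^{t+1}$, and verify the carry-free bound $0 \le B + \alpha\,q^t < q^{t+1}$ so that $n$ is assigned to exactly one block. Once this is secured, both statements follow from a single division of $n$ by $q^{t+1}$ and a second division of the remainder by $q^t$; the difference between $\Double$ and $\Alt$ is entirely encoded in whether the low remainder lands in a copied block or in the leading run of zeros.
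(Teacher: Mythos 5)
Your proposal is correct and follows essentially the same route as the paper: both rest on the decomposition $n = A\,q^{t+1} + \alpha\,q^t + B$ with $0 \le \alpha < q$, $0 \le B < q^t$ (the paper writes $\xi q^{t+1}+\alpha q^t + i$), read off the value from the block structure in Definition~\ref{def:RA}, and translate between this Euclidean decomposition and the digit insertion at position $t$. Your write-up merely makes explicit the block bookkeeping that the paper's proof compresses into ``by Definition~\ref{def:RA}\dots translating in the $q$-adic representation.''
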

\begin{proof}
By Definition~\ref{def:RA}, given $\xi\in\mathbb N$, $0\leq\alpha<q$, $0\leq i<q^t$, one has
\begin{align*}
  \Double(f, q^{t})(\xi q^{t+1}+\alpha q^t+i) &= f(\xi q^t+i) \\
  \Alt(f, q^{t}) (\xi q^{t+1}+\alpha q^t+i) &= \begin{cases}
     f(\xi q^t+i) & \text{if }\alpha=q-1, \\
     0 & \text{otherwise}. \end{cases}
\end{align*}
Translating in the $q$-adic representation, we get the claim.
\end{proof}

\begin{example}
  \begin{itemize}
	\item 	The set of  $ 2 $-subsequences of $ f = [0,1,2,3,4,5]  \in \per{7}$ is \[ \{  [0,1], [2,3], [4,5]\}. \]

	\item If $ h = [1,2,3,4,5,6,7,8] \in \per{11} $, then:
		\begin{align*}
			\Double(h,2^2) = & [1,2,3,4,1,2,3,4,5,6,7,8,5,6,7,8] \\
			\Alt(h,2^2) = & [0,0,0,0,1,2,3,4,0,0,0,0,5,6,7,8].
		\end{align*}
Moreover
		  \begin{align*}
			\Double(h, 2^{2})( 2^{3}+ 2^2+3)=8=h(2^2+3) \qquad & \Double(h, 2^{2})( 2^{3}+2)=7=h(2^2+2) \\
			\Alt(h,2^2)(2^{3}+ 2^2+3)=8=h(2^2+3) \qquad & \Alt(h,2^2)(2^{3}+2)=0.
		  \end{align*}
  \end{itemize}

\end{example}

\begin{remark} \label{rmk:positional}
  Observe the following facts:
  \begin{itemize}
	\item For any $ q^t $ both $ \Double $ and $ \Alt $ are linear operators: for any $c_1,c_2\in \Z_{m}$ and  $f_1,f_2\in S_m$, it is
	\begin{align*}
		\Double(c_1 f_1 + c_2 f_2, q^t)=& c_1\Double(f_1, q^t)+c_2 \Double(f_2, q^t) \\
		\Alt(c_1 f_1 +  c_2 f_2, q^t)= & c_1\Alt(f_1, q^t) + c_2\Alt(f_2,q^t).
	\end{align*}

	\item If $ f \in \per{m} $ has period $ \tau $ and $ q^t \mid \tau $, then both $ \Double(f,q^t) $ and $ \Alt(f,q^t) $ have period $ q\tau $.
  \end{itemize}
 
\end{remark}

\begin{definition}\label{def:PiZ}
	If $ f,g \in \per{p^\ell} $, we write:
	\begin{itemize}
		\item $  f \peq g  $ if for any $ n \ge 0 $, $f(n)=0$ if and only if $g(n)=0$, otherwise
		$ \nu_p (f(n))  = \nu_p (g(n))\in \{0, \cdots, \ell-1\}$.
		\item $ \Pi_i (f) := \# \{ f(x)  \mid 0 \le x < \tau(f), \; \nu_p (f(x)) = i \} $  the number of coefficients with $ p $-adic valuation $ i $, for every $ 0 \le i < \ell $.
		\item $ Z(f) := \# \{ f(x)  \mid 0 \le x < \tau(f), \; f(x) = 0 \}$ the number of zeros.
	\end{itemize}
\end{definition}

Let us consider now the primitives $ \Sigma^s [1] =  \bin_s  $ of the constant sequence $ [1] $ in $ \per{p^\ell} $.
Suppose that $ p^k \le s < p^{k+1} $. The next results allow to link in certain cases the quantities $ \Pi_i(\bin_s), Z(\bin_s) $ to the quantities $ \Pi_i(\bin_{s'}), Z(\bin_{s'}) $ for some $s'$ with $ p^{k-1} \le s' < p^k $.

\begin{lemma}\label{seq:11}
With the notation above, suppose that $ k>\ell$, $0\leq m\leq k-\ell-1$, and that the expression of $ s $ in base $ p $ is:
	\[ s = \lfloor b_k \cdots b_{k-m} \underbracket[0.1ex]{(p-1) \cdots (p-1)}_\ell b_{k-m-\ell-1} \cdots b_0 \rfloor_p.
	\]
	Denote by
	\begin{align*} s' :&=s-\big(b_kp^k+(b_{k-1}-b_k)p^{k-1}+\cdots+(p-1-b_{k-m})p^{k-m-1}\big)\\
	&= \lfloor b_k \cdots b_{k-m} \underbracket[0.1ex]{(p-1) \cdots (p-1)}_{\ell-1} b_{k-m-\ell-1} \cdots b_0 \rfloor_p .
	\end{align*}
%	\[ s' :=s-(p-1)p^{k-m-\ell}= \lfloor b_k \cdots b_{k-m} \underbracket[0.1ex]{(p-1) \cdots (p-1)}_{\ell-1} b_{k-m-\ell-1} \cdots b_0 \rfloor_p .\]
	Then $\bin_s \peq \Alt(\bin_{s'}, p^{k-m-\ell}) $. In particular $ \Pi_i (\bin_s) = \Pi_i(\bin_{s'}) $ and $
		Z(\bin_s)=   Z(\bin_{s'}) + (p-1)  p^{k+\ell-1} $. %\tau_{f^{s'}}
\end{lemma}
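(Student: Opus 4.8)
The plan is to reduce the stated $\peq$-equivalence to a pointwise comparison of $p$-adic valuations and to read it off from Kummer's Theorem. Writing $t := k-m-\ell$ (so $0<t$ by the hypothesis $m\le k-\ell-1$), \Cref{prop:RA} tells us that $\Alt(\bin_{s'},p^{t})(n)$ equals $\binom{n'}{s'}$ when the digit of $n$ in position $t$ is $p-1$, and equals $0$ otherwise, where $n'$ is obtained from $n$ by deleting its position-$t$ digit. Since $s'$ is exactly $s$ with its (value $p-1$) digit in position $t$ deleted, both $n'$ and $s'$ arise from $n$ and $s$ by removing the same position $t$; the whole proof is then an analysis of how the number of borrows in the base-$p$ subtraction $n-s$ changes under this deletion.

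First I would record two elementary facts about borrows. Writing $c_j$ for the borrow entering position $j$ of $n-s$, the digits of $n,s$ below position $t$ coincide with those of $n',s'$, so the borrow $\beta:=c_t$ is common to both subtractions; and for $j\ge t$ one has $n'_j=n_{j+1}$ and $s'_j=s_{j+1}$, so the subtraction $n'-s'$ from position $t$ onwards is literally the subtraction $n-s$ from position $t+1$ onwards, with the same incoming borrow. The key local computation concerns the block of $\ell$ consecutive digits $p-1$ of $s$ occupying positions $t,\dots,t+\ell-1$: a borrow entering a digit $p-1$ always propagates, while with no incoming borrow a digit $p-1$ of $s$ absorbs it if and only if the corresponding digit of $n$ is also $p-1$. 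From this I would distinguish three cases. If the position-$t$ digit $a_t$ of $n$ is not $p-1$, the block forces at least $\ell$ borrows, so $\nu_p\binom ns\ge\ell$ and $\bin_s(n)=0=\Alt(\bin_{s'},p^{t})(n)$. If $a_t=p-1$ and $\beta=0$, the borrow passes through position $t$ with no event there, and the shift identity gives $\nu_p\binom ns=\nu_p\binom{n'}{s'}$ exactly. If $a_t=p-1$ and $\beta=1$, then the incoming borrow both records an event below $t$ and propagates through the whole block, forcing $\nu_p\binom ns\ge\ell+1$ and hence $\nu_p\binom{n'}{s'}\ge\ell$, so both coefficients vanish in $\Z_{p^\ell}$. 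The degenerate range $n<s$ is handled separately by noting that deleting equal digits in position $t$ preserves the inequality, i.e. $n<s\iff n'<s'$, so both sides are $0$. Together these cases prove $\bin_s\peq\Alt(\bin_{s'},p^{t})$.

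Finally I would extract the two numerical identities. By \Cref{prop:periodconstants}, applied to $[1]$ (whose $p$-adic valuation is $0$), the sequences $\bin_s$ and $\bin_{s'}$ have periods $p^{k+\ell}$ and $p^{k+\ell-1}$, and by \Cref{rmk:positional} the operator $\Alt(-,p^{t})$ multiplies the period by $p$, so $\Alt(\bin_{s'},p^{t})$ has period $p^{k+\ell}=\tau(\bin_s)$. As the two sequences are $\peq$-equivalent and share a period, their counts over one period agree: $\Pi_i(\bin_s)=\Pi_i(\Alt(\bin_{s'},p^{t}))$ and $Z(\bin_s)=Z(\Alt(\bin_{s'},p^{t}))$. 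It then remains to compare $\Alt(\bin_{s'},p^{t})$ with $\bin_{s'}$ directly from \Cref{def:RA}: over one period the operator reproduces each entry of $\bin_{s'}$ exactly once, preserving its valuation, and inserts $(p-1)p^{t}$ zeros per $p^{t}$-subsequence, i.e. $(p-1)\tau(\bin_{s'})=(p-1)p^{k+\ell-1}$ additional zeros in total. Hence $\Pi_i(\bin_s)=\Pi_i(\bin_{s'})$ and $Z(\bin_s)=Z(\bin_{s'})+(p-1)p^{k+\ell-1}$.

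The main obstacle is the borrow bookkeeping of the second paragraph: naively the deletion changes the borrow count by the single event at position $t$, which would break the valuation equality. The point that makes it work is the interaction with the $\peq$-convention: whenever an incoming borrow $\beta=1$ produces this discrepancy, the same borrow is forced to cascade through the entire $(p-1)$-block together with an event below it, pushing both valuations to $\ge\ell$ so that both coefficients are genuinely $0$ in $\Z_{p^\ell}$ and the off-by-one becomes invisible. Aligning the three cases and the $n<s$ boundary exactly with the two outputs of $\Alt$ prescribed by \Cref{prop:RA} is the delicate part; the period and counting arguments are then routine.
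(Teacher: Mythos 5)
Your proof is correct and follows essentially the same route as the paper's: a pointwise comparison of $\bin_s(n)$ and $\Alt(\bin_{s'},p^{k-m-\ell})(n)$ via \Cref{prop:RA}, with Kummer's Theorem and a case analysis on the digit $a_{k-m-\ell}$ and the incoming borrow (your cases $a_t\neq p-1$, $a_t=p-1$ with $\beta=0$, and $a_t=p-1$ with $\beta=1$ match the paper's three cases exactly), followed by the same routine period-and-counting argument. Your explicit treatment of the $n<s$ boundary and the careful justification of the digit-shift identity are minor refinements of details the paper leaves implicit.
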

\begin{proof}
	The sequence $ \bin_s $ has period $ p^{\ell +k} $ by Proposition~\ref{prop:periodconstants}. For any \(0 \le n < p^{\ell +k}\), let \( n = \lfloor a_{k+\ell -1} \dots a_0 \rfloor_{p}\) be its expression in base \(p\). The \(n\)-th coefficient of \(\bin_s\) is:
	\[\begin{pmatrix}
		a_{k+\ell-1} & \cdots & a_{k+1} & a_k & \cdots & a_{k-m} & a_{k-m-1} \cdots a_{k-m-\ell} & a_{k-m-\ell-1} &\cdots& a_0 \\
		&&&b_k & \cdots & b_{k-m} & \underbracket[0.1ex]{(p-1) \cdots (p-1)}_\ell & b_{k-m-\ell-1} &\cdots& b_0
	\end{pmatrix}.  \]
Let \(n'\) be obtained from \(n\) by removing the coefficient \(a_{k-m-\ell}\). The \(n'\)-th  coefficient of $ \bin_{s'} $ is:
\[  \begin{pmatrix}
	a_{k+\ell-1} & \cdots & a_{k+1} & a_k & \cdots & a_{k-m} & a_{k-m-1} \cdots a_{k-m-\ell+1} & a_{k-m-\ell-1} &\cdots& a_0 \\
	&&&b_k & \cdots & b_{k-m} & \underbracket[0.1ex]{(p-1) \cdots (p-1)}_{\ell-1} & b_{k-m-\ell-1} &\cdots& b_0
	\end{pmatrix}.  \]
  By Proposition~\ref{prop:RA}, to conclude that $ \bin_s \peq \Alt(\bin_{s'}, p^{k-m-\ell}) $, it is enough to show that
\( \nu_{p}( \bin_{s}(n)) = \nu_{p} (\bin_{s'}(n'))  \) if \(a_{k-m-\ell} = p-1\) and \(\bin_s(n) = 0\) otherwise.  To prove this, we use Kummer's Theorem studying the number of borrows in the subtractions $n - s$ and $n' - s'$ in base $p$:
\begin{itemize}
	\item If $ a_{k-m-\ell} = p-1 $: 
	\begin{itemize}
		\item If  $ a_{k-m-\ell} $ lends, the number of borrows in $ s $ is one more than the number of borrows in $ s' $. However in both binomials there are at least $ \ell $ borrows (given by the remaining $ (\ell-1) $ coefficients equal to $ p-1 $), hence both binomials are zero modulo $ p^\ell $.
		
		\item If $ a_{k-m-\ell} $ does not lend,
		the number of borrows is the same for $ s $ and $ s' $. 
	\end{itemize}

	\item If $ a_{k-m-\ell} < p-1 $: the binomial $\bin_{s}(n) = 0 $ since again there are at least $ \ell $ borrows.
\end{itemize}
From the considerations above, we conclude that $ \bin_s \peq \Alt(\bin_{s'}, p^{k-m-\ell}) $.
Then immediately follows
\begin{align*}
	\Pi_i(\bin_s) = \Pi_i(\bin_{s'})  \qquad
	Z(\bin_s) =   Z(\bin_{s'}) + (p-1)\tau({\bin_{s'}}) = Z(\bin_{s'}) + (p-1)  p^{k+\ell-1}.
\end{align*}

\end{proof}
\begin{remark}
With the notation above, it is possible to verify that the proof of the previous lemma holds also for the case $s=\lfloor \underbracket[0.1ex]{(p-1) \cdots (p-1)}_{\ell} b_{k-\ell} \cdots b_0 \rfloor_p$ (which corresponds to $m=-1$).
\end{remark}

\begin{lemma}\label{seq:000} 
	With the notation above, suppose that  $ k> \ell$, $0\leq m\leq k-\ell-1$ and that  the expression of $ s $ in base $ p $ is:
	\[ s = \lfloor b_k \cdots b_{k-m} \underbracket[0.1ex]{0 \cdots 0}_\ell b_{k-m-\ell-1} \cdots b_0\rfloor_p .\]
	Denote by
	\begin{align*} s' :&=s-\big(b_kp^k+(b_{k-1}-b_k)p^{k-1}+\cdots+(b_{k-m}-b_{k-m+1})p^{k-m}-b_{k-m}p^{k-m-1}\big)\\
	&= \lfloor b_k \cdots b_{k-m} \underbracket[0.1ex]{0 \cdots 0}_{\ell-1} b_{k-m-\ell-1} \cdots b_0\rfloor_p.
	\end{align*}
	Then $ \bin_s \peq \Double(\bin_{s'}, p^{k-m-1}) $. In particular,
		$ \Pi_i(\bin_s) = p \cdot  \Pi_i(\bin_{s'})  $ and $
		Z(\bin_s) =   p \cdot Z(\bin_{s'}). $%\tau_{f^{s'}}
\end{lemma}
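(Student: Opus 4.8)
The plan is to follow the proof of \Cref{seq:11} almost verbatim, replacing the operator $\Alt$ by $\Double$; the conceptual difference is that a block of $\ell$ zeros in the subtrahend, unlike a block of $\ell$ copies of $p-1$, never forces borrows on its own, so instead of producing ``forbidden'' configurations it produces the $p$-fold repetition pattern encoded by $\Double$. First I would record the periods: by \Cref{prop:periodconstants} (applied with $c=1$, whence $t=0$, and top nonzero digit of $s$ in position $k$) the sequence $\bin_s$ has period $p^{k+\ell}$; since the top nonzero digit of $s'$ sits in position $k-1$, $\bin_{s'}$ has period $p^{k+\ell-1}$, and by \Cref{rmk:positional} the sequence $\Double(\bin_{s'},p^{k-m-1})$ again has period $p^{k+\ell}$. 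It therefore suffices to compare the two sequences coefficient by coefficient over the common period $0\le n<p^{k+\ell}$.

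Writing $n=\lfloor a_{k+\ell-1}\cdots a_0\rfloor_p$, let $n'$ be the integer obtained by deleting the digit $a_{k-m-1}$ and sliding the higher digits down one position. By the $\Double$ part of \Cref{prop:RA} with $t=k-m-1$ one has $\Double(\bin_{s'},p^{k-m-1})(n)=\bin_{s'}(n')$ for every value of the deleted digit. As in \Cref{seq:11} I would then display the two digit-aligned subtractions $n-s$ and $n'-s'$: here $s$ and $s'$ (resp. $n$ and $n'$) differ exactly by the zero (resp. the digit $a_{k-m-1}$) occupying position $k-m-1$, all lower digits being equal. By Kummer's Theorem the valuations $\nu_p(\bin_s(n))$ and $\nu_p(\bin_{s'}(n'))$ are the numbers of borrows in these two subtractions, so the core of the argument is to compare those two borrow counts.

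The comparison is governed by the borrow $\beta$ that enters position $k-m-1$ from below; since all digits below that position agree, $\beta$ is common to both subtractions. If $\beta=0$, the zero of $s$ in position $k-m-1$ generates no borrow, the upper blocks of the two subtractions receive the same incoming borrow, and the two borrow counts are equal, giving $\nu_p(\bin_s(n))=\nu_p(\bin_{s'}(n'))$. The delicate case, which I expect to be the main obstacle, is $\beta=1$, where the naive counts differ by one. The resolution is to observe that a borrow can reach position $k-m-1$ only by propagating through the entire zero block, which forces $a_{k-m-\ell}=\cdots=a_{k-m-2}=0$ and a borrow already produced in the bottom block; this already accounts for $(\ell-1)+1=\ell$ borrows in $n'-s'$, and at least as many in $n-s$. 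Hence by Kummer both $\bin_{s'}(n')$ and $\bin_s(n)$ vanish in $\Z_{p^\ell}$, so the defining condition of $\peq$ is again satisfied. Combining the two cases yields $\bin_s\peq\Double(\bin_{s'},p^{k-m-1})$.

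The numerical identities then follow formally. Over the common period $p^{k+\ell}$ the sequence $\Double(\bin_{s'},p^{k-m-1})$ repeats each coefficient of $\bin_{s'}$ exactly $p$ times, so its list of values is $p$ copies of the list of values of $\bin_{s'}$ over one period; since $\peq$ preserves both the vanishing locus and the $p$-adic valuation and the two sequences share the period $p^{k+\ell}$, I conclude $\Pi_i(\bin_s)=p\,\Pi_i(\bin_{s'})$ for each $i$ and $Z(\bin_s)=p\,Z(\bin_{s'})$.
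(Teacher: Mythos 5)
Your proof is correct and follows essentially the same route as the paper's: both identify $\Double(\bin_{s'},p^{k-m-1})(n)$ with $\bin_{s'}(n')$ via \Cref{prop:RA} by deleting the digit $a_{k-m-1}$, and then compare the borrow counts of $n-s$ and $n'-s'$ using Kummer's Theorem. In fact your case split on the incoming borrow $\beta$ at position $k-m-1$ is slightly sharper than the paper's split on whether that position lends: in the sub-case where a borrow reaches position $k-m-1$ but is absorbed by a nonzero digit $a_{k-m-1}$, the two borrow counts can genuinely differ (the paper asserts they coincide, e.g.\ $n=28$, $s=9$, $p=2$, $\ell=2$ gives $2$ versus $3$ borrows), yet, as you observe, both are at least $\ell$, so both binomials vanish in $\Z_{p^\ell}$ and the relation $\peq$ still holds.
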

\begin{proof}
The sequence $ \bin_s $ has period $ p^{\ell +k} $ by Proposition~\ref{prop:periodconstants}.
  Similarly to the previous lemma, for \( 0 \le n < p^{\ell + k}\) with \( n = \lfloor a_{k + \ell -1} \dots a_{0} \rfloor_{p} \), the coefficient \( \bin_s(n)  = \binom ns\) is:
	\[  \begin{pmatrix}
		a_{k+\ell-1} & \cdots & a_{k+1} & a_k & \cdots & a_{k-m} & a_{k-m-1} \cdots a_{k-m-\ell} & a_{k-m-\ell-1} &\cdots& a_0 \\
		&&&b_k & \cdots & b_{k-m} & \underbracket[0.1ex]{0 \quad \cdots  \quad 0}_\ell & b_{k-m-\ell-1} &\cdots& b_0
	\end{pmatrix}.  \]
	Let \(n'\) be obtained from \(n\) by removing the coefficient \(a_{k-m-1}\), hence the \(n'\)-th  coefficient of $ \bin_{s'} $ is:
	%Analogously, the coefficient \(f^{s'} (n') \binom{n'}{s'}\) is equal to the binomial obtained by removing the column  $ \binom{a_{k-m-1}}{0} $, i.e.:
	\[  \begin{pmatrix}
		a_{k+\ell-1} & \cdots & a_{k+1} & a_k & \cdots & a_{k-m} & a_{k-m-2} \cdots a_{k-m-\ell} & a_{k-m-\ell-1} &\cdots& a_0 \\
		&&&b_k & \cdots & b_{k-m} & \underbracket[0.1ex]{0 \quad \cdots \quad 0}_{\ell-1} & b_{k-m-\ell-1} &\cdots& b_0
	\end{pmatrix}.  \]
By Proposition~\ref{prop:RA}, to conclude that $ \bin_s \peq \Double(\bin_{s'}, p^{k-m-1}) $, it is enough to show that,
 for any value of  \(a_{k-m-1}\),
$\bin_{s'}(n')=0$ whenever $\bin_{s}(n)=0$, otherwise
\( \nu_{p}( \bin_{s}(n)) = \nu_{p} (\bin_{s'}(n'))  \).
To prove this, we use Kummer's Theorem studying the number of borrows in the subtractions $n - s$ and $n' - s'$ in base $p$:
	\begin{itemize}
		\item If $ a_{k-m-1} $ lends, then $ a_{k-m-2}=\dots =a_{k-m-\ell}=0 $ and they all lend.
So in this case in both  $ s $ and $ s' $ there are at least $ \ell $ borrows (notice that  $ a_{k-m} $ lends in $s'$); so the binomials are both equal to zero.
		
		\item If $ a_{k-m-1} $ does not lend, then the number of borrows remains the same in both the binomials.
	\end{itemize}
Henceforth we can conclude that  $ \bin_s \peq \Double(\bin_{s'}, p^{k-m-1}) $, thus:
	\[ \Pi_i(\bin_s) = p \cdot  \Pi_i(\bin_{s'})  \qquad Z(\bin_s) =   p \cdot Z(\bin_{s'}). \]
\end{proof}

\begin{remark}
Observe that if $\ell = 1$, i.e. the base ring $\Z/p \Z$ is a field, \Cref{seq:000} (resp. \Cref{seq:11}) reduces to removing a coefficient equal to 0 (resp. equal to $p-1$) in the expression of $s$ in base $p$. This is just a consequence of Lucas's theorem on binomial coefficients modulo $p$.
\end{remark}

In order to present the last result of this section, we need some preliminary definitions.

\begin{definition}
Given 	$ s = \lfloor b_k \cdots b_{k-m} \underbracket[0.1ex]{(p-1)\; 0 \cdots 0}_{\ell} \, b_{k-m-\ell-1} \cdots b_0\rfloor_p \in \N$ with $k > \ell$ and $  0 \le m \le k -\ell -1$, we denote by $E_{s}$ the following subset of $\{ 0, \dots, p^{k+\ell}-1\}$:
\begin{align*}
E_s:=\Big\{n\in \mathbb N: 0\leq n<p^{k+\ell},\ & n=\lfloor a_{k+\ell-1}\dots a_0\rfloor_p \text{ such that: } \\
	a_{k-m-1}=p-1& \qquad  a_{k-m-2} \neq 0 \\
a_{k-m-i} =0\quad \forall \, 3\leq i \leq \ell & \qquad a_{k-m-\ell -1}< b_{k-m-\ell -1} \\
a_j \ge b_j \quad  \forall\, j \in \{0, \dots, & \, k-m-\ell -2\}
  \cup \{ k-m, \dots, k \} \Big\}.
	\end{align*}
We denote by $\chi_{E_{s}} \in \per{p^{\ell}}$ the sequence:
\[ \chi_{E_{s}} = [e_{0}, \dots, e_{p^{k + \ell}-1}] \quad \text{where }
e_{i} =
\begin{cases}
1 \text{ if } i \in E_{s} \\
0 \text{ otherwise. }
\end{cases}
\]
\end{definition}

The definition above makes sense also for $ m = -1 $: in such a way we include also the case $s = \lfloor \underbracket[0.1ex]{(p-1)\; 0 \cdots 0}_{\ell} \, b_{k-\ell} \cdots b_0\rfloor_p $. It is easy to check that
\[ |E_{s} | =  p^{\ell -1}
 \left(\prod_{j=k-m}^k (p-b_j)\right)(p-1)\; b_{k-m-\ell-1}
	\left(\prod_{i = 0}^{k - \ell -m -2}(p-b_i)\right)\]
and hence $E_{s} = \emptyset $ if $b_{k-m-\ell-1} = 0 $.

\begin{lemma}\label{seq:101}
	With the notation above, suppose that $ k> \ell, 0\leq m\leq k-\ell-1$ and that the expression of $ s $ in base $ p $ is:
	\[ s = \lfloor b_k \cdots b_{k-m} \underbracket[0.1ex]{(p-1)\; 0 \cdots 0}_{\ell} \, b_{k-m-\ell-1} \cdots b_0\rfloor_p .\]
	Denote by
	\begin{align*} s' :&=s-\big(b_kp^k+(b_{k-1}-b_k)p^{k-1}+\cdots+(p-1-b_{k-m})p^{k-m-1}-(p-1)p^{k-m-2}\big)\\
	&= \lfloor b_k \cdots b_{k-m} \underbracket[0.1ex]{(p-1)\; 0 \cdots 0}_{\ell-1} \,  b_{k-m-\ell-1} \cdots b_0\rfloor_p.
	\end{align*}
%
%	
%	
%	and denote
%	\[ s' := \lfloor b_k \cdots b_{k-m} \underbracket[0.1ex]{(p-1)\; 0 \cdots 0}_{\ell-1} \,  b_{k-m-\ell-1} \cdots b_0\rfloor_p. \]
Then $\bin_{s} \equiv_{\nu} \Double(\bin_{s'}, p^{k-m-2}) + p^{\ell-1} \chi_{E_{s}}$ and thus
\begin{align*}
		\Pi_i(\bin_s) & = p\cdot \Pi_i(\bin_{s'}) \qquad 0 \le i \leq \ell -2 \\
		\Pi_{\ell -1}(\bin_s) & = p\cdot \Pi_{\ell -1}(\bin_{s'}) + | E_{s} | \\
		Z(\bin_s) & = p\cdot Z(\bin_{s'}) - | E_{s} |.
	\end{align*}
\end{lemma}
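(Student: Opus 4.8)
The plan is to mimic the strategy of \Cref{seq:000} and \Cref{seq:11}: compare, coefficient by coefficient over one period, the $p$-adic valuations of $\bin_s$ and of $\Double(\bin_{s'},p^{k-m-2})$ by counting borrows via Kummer's Theorem. First I would record that, by \Cref{prop:periodconstants}, $\bin_s$ has period $p^{k+\ell}$ while $\bin_{s'}$ has period $p^{k+\ell-1}$; together with \Cref{rmk:positional} this gives $\Double(\bin_{s'},p^{k-m-2})$ the period $p^{k+\ell}$ as well, so both sides of the claimed congruence live on $\{0,\dots,p^{k+\ell}-1\}$. For each such $n=\lfloor a_{k+\ell-1}\cdots a_0\rfloor_p$ let $n'$ be obtained by deleting the digit $a_{k-m-2}$; by \Cref{prop:RA} one has $\Double(\bin_{s'},p^{k-m-2})(n)=\bin_{s'}(n')$, so it suffices to compare the number of borrows in $n-s$ against those in $n'-s'$.

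Second, I would isolate the three zones of the subtraction. The digits below position $k-m-\ell-1$ coincide in $s,s'$ and in $n,n'$, so they produce the same borrows and feed the same carry $\beta\in\{0,1\}$ into the zero block; the digits above the isolated $p-1$ are likewise identical up to a shift, so once the borrow leaving the block agrees, the two high parts contribute identically. Everything thus reduces to how $\beta$ travels through the $\ell-1$ zeros of $s$ (positions $k-m-2$ down to $k-m-\ell$) and meets the digit $p-1$ at position $k-m-1$. The natural case split is on whether $\beta=0$, whether $\beta=1$ is absorbed inside the \emph{common} zeros $a_{k-m-\ell},\dots,a_{k-m-3}$, or whether those common digits all vanish, so that the extra digit $a_{k-m-2}$ (present only in $n-s$) becomes decisive.

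Third, and this is the crux, I would show the relation is a plain $\Double$ identity off $E_s$ and acquires the correction exactly on $E_s$. When $\beta=0$, or when $\beta=1$ is absorbed before position $k-m-2$, the deleted digit is irrelevant and $\nu_p\binom ns=\nu_p\binom{n'}{s'}$. When instead $\beta=1$ and $a_{k-m-\ell}=\dots=a_{k-m-3}=0$, the borrow reaches the shortened block of $s'$, forcing $\ell-2$ borrows there and one more at the $p-1$; since $\beta=1$ already costs a borrow below, $\binom{n'}{s'}$ accumulates $\ge\ell$ borrows and is $\equiv 0$. For $\binom ns$ the digit $a_{k-m-2}$ intervenes: if $a_{k-m-2}=0$ the borrow still propagates and $\binom ns\equiv0$ too, matching; if $a_{k-m-2}\neq0$ the borrow is absorbed at position $k-m-2$, and a short count shows the surviving borrows total exactly $\ell-1$ precisely when $a_{k-m-1}=p-1$ together with $a_j\ge b_j$ off the single forced borrow $a_{k-m-\ell-1}<b_{k-m-\ell-1}$, i.e. precisely when $n\in E_s$; any violation pushes the count back to $\ge\ell$. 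Hence $\bin_s(n)$ has valuation $\ell-1$ on $E_s$ and agrees in valuation with $\bin_{s'}(n')$ off it, which is exactly $\bin_s\peq\Double(\bin_{s'},p^{k-m-2})+p^{\ell-1}\chi_{E_s}$ (the summands never clash, since $\bin_{s'}(n')=0$ whenever $n\in E_s$). The main obstacle is keeping this borrow bookkeeping around the lone $p-1$ correct in every sub-case; the defining conditions of $E_s$ are exactly those that survive the count.

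Finally, the numerical consequences follow by counting over one period. The operator $\Double$ repeats each value $p$ times and $p$-fold extends the period, so it multiplies every $\Pi_i(\bin_{s'})$ and $Z(\bin_{s'})$ by $p$; the correction $p^{\ell-1}\chi_{E_s}$ then converts exactly the $|E_s|$ positions of $E_s$ from zeros of $\Double(\bin_{s'})$ into coefficients of valuation $\ell-1$. This yields $\Pi_i(\bin_s)=p\,\Pi_i(\bin_{s'})$ for $0\le i\le\ell-2$, adds $|E_s|$ to $\Pi_{\ell-1}$ and subtracts $|E_s|$ from $Z$, as claimed; the identity $\sum_{i}\Pi_i(\bin_s)+Z(\bin_s)=p^{k+\ell}$ confirms the bookkeeping.
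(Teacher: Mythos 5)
Your proposal is correct and follows essentially the same route as the paper's proof: both compare $\bin_s(n)$ with $\bin_{s'}(n')$, where $n'$ is obtained by deleting the digit $a_{k-m-2}$, and count borrows via Kummer's Theorem, splitting on whether a borrow enters the zero block, whether it is absorbed at a common nonzero digit, and whether the extra digit $a_{k-m-2}$ absorbs it --- identifying $E_s$ as exactly the set where the valuation is forced to be $\ell-1$ while $\bin_{s'}(n')=0$. The additional bookkeeping you supply (periods of both sides, non-clashing of the two summands, and the final counting of $\Pi_i$ and $Z$ over one period) only makes explicit what the paper states without elaboration.
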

\begin{proof}
	The sequence $ \bin_s $ has period $ p^{\ell +k} $ by Proposition~\ref{prop:periodconstants}.
  Similarly to the previous lemmas, for \( 0 \le n < p^{\ell + k}\) with \( n = \lfloor a_{k + \ell -1} \dots a_{0} \rfloor_{p} \), the coefficient \( \bin_s(n)  = \binom ns\) is:
	\[  \begin{pmatrix}
		a_{k+\ell-1}  \cdots  a_{k+1} & a_k  \cdots  a_{k-m} &a_{k-m-1} & a_{k-m-2}  \cdots a_{k-m-\ell} & a_{k-m-\ell-1} \cdots a_0 \\
		&b_k  \cdots  b_{k-m} & p-1 & \underbracket[0.1ex]{ 0 \quad \cdots  \quad 0}_{\ell-1} & b_{k-m-\ell-1} \cdots b_0
	\end{pmatrix}.  \]
	Let \(n'\) be obtained from \(n\) by removing the coefficient \(a_{k-m-2}\), hence the \(n'\)-th  coefficient of $ \bin_{s'} $ is:
		\[  \begin{pmatrix}
		a_{k+\ell-1}  \cdots  a_{k+1} & a_k  \cdots  a_{k-m} & a_{k-m-1} & a_{k-m-3}  \cdots a_{k-m-\ell}  & a_{k-m-\ell-1} \cdots a_0 \\
		&b_k  \cdots  b_{k-m} & p-1 & \underbracket[0.1ex]{ 0 \quad \cdots  \quad 0}_{\ell-2} & b_{k-m-\ell-1} \cdots b_0
	\end{pmatrix}.  \]
%  Let us observe that to any \(n'\)-th  coefficient of $ [1]^{s'} $ correspond $p$ coefficients of $[1]^{s} $.
Let us use Kummer's Theorem to study the number of borrows in the subtractions $n - s$ and $n' - s'$ in base $p$:
\begin{itemize}
\item	if $a_{k-m-\ell -1}$ does not lend, the two binomials have the same number of borrows.
\item 	if $a_{k-m-\ell -1}$ lends, we have the following  cases:
\begin{itemize}
\item	if $a_{k-m-2}=a_{k-m-3} = \cdots = a_{k-m-\ell}=0 $, then both binomials have at least $\ell$ borrows and hence they are zero. 
\item
If $a_{k-m-3} = \cdots = a_{k-m-\ell}=0 $  but
$a_{k-m-2}\neq 0$, there are at least $\ell$ borrows in $s'$. In this situation there are at least $\ell-1$ borrows in $s$ and they are precisely $\ell -1$ when $n \in E_{s}$.
\item In the remaining cases, there exists an index $k-m-\ell\leq i\leq {k-m -3}$
	such that $a_i\not=0$, thus $a_{k-m -2}$ does not lend, so the borrows in $s$ and $s'$ are the same. 
	\end{itemize}
	\end{itemize}
This proves the statement. 
\end{proof}

\begin{remark}\label{rem:Egamma}
With the notation above, it is possible to verify that the proof of the previous lemma holds also for the case
$s =  \lfloor \underbracket[0.1ex]{(p-1)\; 0 \cdots 0}_{\ell} \, b_{k-\ell} \cdots b_0\rfloor_p$ (which corresponds to $m=-1$).
Moreover, observe that \Cref{seq:101} generalises \Cref{seq:000} if $p = 2$: indeed the hypotheses of \Cref{seq:000} imply $b_{k-m - \ell -1} = 0$ in \Cref{seq:101} and hence $E_{s} = \emptyset$.
\end{remark}

\begin{remark}\label{rem:s'}
Let $s=\lfloor b_k\cdots b_0\rfloor$. The construction of $s'$ in \Cref{seq:11,seq:000,seq:101} does not depend on the $(k-m-\ell)$-tail $b_{k-m-\ell-1}...b_0$. Therefore if $s$ and $s+i$ differ only on their $(k-m-\ell)$-tails, then $(s+i)'=s'+i$.
\end{remark}

\section{The case of $ \Z_4 $ and Vieru's sequence} \label{section:Vieru_sequence}
Let us focus on $ \Z_4 $: with the notation of the previous section,  we are considering $ p = 2 $ and $ \ell=2 $. Notice that in this case \Cref{seq:11,seq:101} allow to reduce each binomial coefficient to a smaller one, permitting to link any primitive $ [1]^s $ with $2^k\leq s<2^{k+1}$ to a primitive $ [1]^{s'} $ with $2^{k-1}\leq s'<2^k$.

As an example we provide a recursive formula for the zeros $ Z(s):=Z(v^s) $
of the primitives $v^s:=\Sigma^sv$ of the sequence \[v = [2,1,2,0,0,1,0,0] \in \per{4}, \] 
when $ 2^k \le s < 2^{k+1} $ for $k \ge 5$.
The sequence $v$ is the nilpotent part of the sequence of Example~\ref{example:vieru} and
 the one motivating this article, as explained in the historical introduction.
%Let us denote by $v^s:=\Sigma^sv$ in analogy with the case of $f=[1]$.
%For this sequence we can write a complete recursive formula for the number of zeros  when $ 2^k \le s < 2^{k+1} $ for $k \ge 5$. 
The sequence $ Z(s)$ is clearly a sequence of natural numbers.

To state our formula, we need some technical results.
Firstly observe that since $2 \cdot 2 = 0 $ in $\Z_{4}$, if $2^{k} \le s, t < 2^{k +1}$, then
\[ 2 \chi_{E_{s} \triangle E_{t}} := 2(\chi_{E_{s}}+\chi_{E_{t}})(n)=\begin{cases}
      2& \text{if }n\in E_{s} \triangle E_{t}, \\
      0& \text{otherwise.}
\end{cases}
\]

Furthermore if $s = \lfloor 10 b_{k-2} \dots b_{0} \rfloor_{2}$, the quantity $|E_s|$ is linked with the number $\mathfrak z(s)$ of 0's in the binary expansion of $s$ in the following way:
\begin{align*}
|E_{s}| &= 2 \cdot b_{k-2}\cdot 2^{\mathfrak z(\lfloor b_{k-3}\cdots b_0\rfloor_2)}\\
&=b_{k-2}\cdot 2^{\mathfrak z(s)}=\begin{cases}
    2^{\mathfrak z(s)}  & \text{if  }b_{k-2}=1, \\
     0 & \text{otherwise}.
\end{cases}
\end{align*}

The coefficients $\Pi_0(f)$, $\Pi_1(f)$, $Z(f)$ introduced in \Cref{def:PiZ} represent the number of 1 or 3, the number of 2, and the number of 0 in $f$, respectively.

%Following the notation of \Cref{seq:101} i
If
%\Cref{rem:Egamma}, if
$s =  \lfloor 1 b_{k-1} \cdots  b_0\rfloor_2$  and $t = \lfloor 1  b'_{k-1} \cdots  b'_0\rfloor_2$,
%then for each $n\in E_s$ (see \Cref{seq:101}) 
%the set $\mathbb U_s$ contains only one $2^{k+2}$-sequence $u_s$ with $\gamma_s=2^{k+1-wt(s)}$ occurrences of 2, and $2^{k+2}$ occurrences of 0.
%The occurrences of 2 are precisely in the positions $n\in E_s$, i.e.,
%\[n=\lfloor a_{k+1}\cdots a_{k-m+1}110a_{k-m-3}\cdots a_0\rfloor_2\]
%with $a_i\geq b_i$ for $i\in\{0,...,k-m-3\}\cup\{k-m+1,..., k+1\}$.
denote by $(s\mid t)$ the bitwise OR of $s$ and $t$, i.e., the number whose 2-adic representation has 1 in each bit position for which the corresponding bit of either $s$ or $t$ is 1.
%As $2+2=0$ in $\mathbb Z_4$, for any $0\leq n<2^{k+2}$ one has
%\[2(\chi_{E_s}+\chi_{E_{t}})(n)=\begin{cases}
%      2& \text{if }n\in E_s \triangle E_{t}, \\
%      0& \text{otherwise}
%\end{cases}
%\]
%so $2(\chi_{E_s}+\chi_{E_{t}})=2\chi_{E_s \triangle E_{t}}$ and it contains  $2^{\mathfrak z(s)}+2^{\mathfrak z(t)}-2\times 2^{\mathfrak z((s\mid t))}$ 
%occurrences of 2
%(following the notation of \Cref{rem:Egamma}).

%Given an integer $ s \ge 1 $, and its 2-adic representation $\lfloor b_k\cdots b_1b_0\rfloor$ let
%we call $ \err{s} $ the sequence $ 2 \mathbbm{1}_{E_s} $ defined in Equation~\ref{vector}.
%Notice that if $\mathfrak z(s)$ is the number of figures equal to 0 in $[s]_2$, we have that $\Pi_1(\err{s})=2^{\mathfrak z(s)}$.
%
%
%\begin{definition}
%	Given 
%	\[[s_1]_2=101b_{1,k-3}\cdots b_{1,0}\quad\text{and}\quad
%	[s_2]_2=101b_{2,k-3}\cdots b_{2,0}\]
%	we denote by $(s_1\mid s_2)$ the \emph{bitwise} OR of $s_1$ and $s_2$, i.e. the number whose binary representation has 1 in each bit position for which the corresponding bits of either or both $s_1$ and $s_2$ are 1. As  $2+2=0$ in $\mathbb Z_4$,
%	\[\Pi_1(\err{s_1} + \err{s_2})=2^{\mathfrak z(s_1)}+2^{\mathfrak z(s_2)}-2^{\mathfrak z(s_1\mid s_2)+1}.\]
%\end{definition} 

\begin{lemma}\label{lemma:wisebit} 
	Let $k\geq 5$ and $2^k+2^{k-2}\leq s<2^k+2^{k-1}-4$. 
	Set $\mathfrak d_k$ equal to the $(2^{k-2}-4)$-sequence 
	$\mathfrak d_k(s):=\Pi_1(2(\chi_{E_{s+1}\triangle E_{s+3}}))$.
Then 
	\[\mathfrak d_k(s)=2^{\mathfrak z(s+1)}+2^{\mathfrak z(s+3)}-2\times 2^{\mathfrak z((s+1\mid s+3))}\]
	and 
	\[\mathfrak d_5=(4,8,4,4)\quad\text{and} \quad \mathfrak d_{k+1}=(2\times \mathfrak d_k,4,2^{k-1}, 2^{k-2}, 2^{k-2},\mathfrak d_k) \ \forall k\geq 5.\]\end{lemma}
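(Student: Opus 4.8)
I would separate the statement into its two independent assertions: the closed bit-counting formula for $\mathfrak d_k(s)$, and the pair (base case $\mathfrak d_5$, recursion $\mathfrak d_{k+1}\mapsto\mathfrak d_k$). Everything reduces to the combinatorics of the sets $E_{s+1}$ and $E_{s+3}$ and of the function $\mathfrak z$.

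\textbf{The closed formula.} Using the description of $2(\chi_{E_{s+1}\triangle E_{s+3}})$ recalled just before the lemma, its value is $2$ exactly on $E_{s+1}\triangle E_{s+3}$ and $0$ elsewhere; since $\nu_2(2)=1$, the quantity $\Pi_1$ counts precisely the positions in $E_{s+1}\triangle E_{s+3}$, so $\mathfrak d_k(s)=|E_{s+1}|+|E_{s+3}|-2\,|E_{s+1}\cap E_{s+3}|$. Under the hypothesis $2^k+2^{k-2}\le s<2^k+2^{k-1}-4$, the tail of $s$ is at most $2^{k-2}-5$, so adding $1$ or $3$ never carries past bit $k-3$; hence $s+1$ and $s+3$ both keep the leading pattern $1\,0\,1$ at positions $k,k-1,k-2$. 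This is the $m=-1$ case of $E_{\bullet}$ of \Cref{rem:Egamma}, so the formula $|E_s|=b_{k-2}\cdot 2^{\mathfrak z(s)}$ with $b_{k-2}=1$ gives $|E_{s+1}|=2^{\mathfrak z(s+1)}$ and $|E_{s+3}|=2^{\mathfrak z(s+3)}$. The only new point is the intersection: writing out the defining conditions of $E_{s+1}$ (namely $a_k=a_{k-1}=1$, $a_{k-2}=0$, $a_{k+1}$ free, and $a_j\ge (s+1)_j$ for $0\le j\le k-3$) and the analogous ones for $E_{s+3}$, the intersection imposes $a_j\ge\max((s+1)_j,(s+3)_j)=((s+1\mid s+3))_j$ on the low bits. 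A free-bit count (one free bit $a_{k+1}$, plus one for each zero bit of $s+1\mid s+3$ below position $k-2$) yields $|E_{s+1}\cap E_{s+3}|=2^{\mathfrak z(s+1\mid s+3)}$, and substituting gives the displayed identity.

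\textbf{Base case and recursion.} The base case $\mathfrak d_5=(4,8,4,4)$ I would verify by evaluating the closed formula at $s=40,41,42,43$, reading $\mathfrak z(s+1),\mathfrak z(s+3),\mathfrak z(s+1\mid s+3)$ off the $6$-bit expansions. For the recursion I reindex $\mathfrak d_k$ by $r\in\{0,\dots,2^{k-2}-5\}$ via $s=2^k+2^{k-2}+r$ (so $r$ fills bits $k-3$ down to $0$) and likewise $\mathfrak d_{k+1}$ by $r'\in\{0,\dots,2^{k-1}-5\}$, where $r'$ now fills bits $k-2$ down to $0$; the new feature is the extra high tail-bit at position $k-2$. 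Since the leading pattern $1\,0\,1$ contributes the same constant to each $\mathfrak z$, the whole computation is governed by how the high tail-bit of $r'+1$, $r'+3$ and $(r'+1)\mid(r'+3)$ behaves. I distinguish three regimes: if $r'+3<2^{k-2}$ that bit is $0$ in all three arguments, every exponent increases by one relative to $\mathfrak d_k$, and the formula gives $\mathfrak d_{k+1}(r')=2\,\mathfrak d_k(r')$ for $r'\le 2^{k-2}-5$, the block $2\times\mathfrak d_k$; if $r'=2^{k-2}+r''$ with $r''\le 2^{k-2}-5$ that bit is $1$ in all three, every exponent matches $\mathfrak d_k$, and $\mathfrak d_{k+1}(r')=\mathfrak d_k(r'')$, the trailing block $\mathfrak d_k$; the four indices $r'=2^{k-2}-4,2^{k-2}-3,2^{k-2}-2,2^{k-2}-1$ are the boundary terms, computed one at a time to be $4,2^{k-1},2^{k-2},2^{k-2}$. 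Finally $(2^{k-2}-4)+4+(2^{k-2}-4)=2^{k-1}-4$ confirms these regimes exhaust the index set of $\mathfrak d_{k+1}$.

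\textbf{Main obstacle.} The delicate part is the four boundary cases, where adding $1$ or $3$ flips the high tail-bit and the carry propagates through a run of ones; there $\mathfrak z(r'+1)$, $\mathfrak z(r'+3)$ and $\mathfrak z(r'+1\mid r'+3)$ must each be recomputed by hand, and it is the interaction of these carries with the bitwise OR (e.g. checking that the OR still carries the top pattern $1\,0\,1$, or jumps to an all-ones tail) that produces the non-uniform values $4,2^{k-1},2^{k-2},2^{k-2}$. This bookkeeping is the step most prone to sign/off-by-one errors, so I would write out the three shifted arguments explicitly in each of the four cases.
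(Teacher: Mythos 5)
Your proposal is correct and follows essentially the same route as the paper: the closed formula via $|E_{s+1}|+|E_{s+3}|-2|E_{s+1}\cap E_{s+3}|$ with $|E_{s+1}\cap E_{s+3}|=2^{\mathfrak z(s+1\mid s+3)}$, direct verification of $\mathfrak d_5$, and the recursion obtained by splitting the index range of $\mathfrak d_{k+1}$ into the same three blocks (the paper's groups $\mathrm{I}_{k+1}$, $\mathrm{II}_{k+1}$, $\mathrm{III}_{k+1}$), where the outer blocks shift the exponents by $1$ or $0$ and the four middle terms are computed by hand. The only cosmetic difference is that you describe block membership through the carry behaviour of bit $k-2$ rather than by explicit intervals, which changes nothing in substance.
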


The proof of the following Lemma is postponed in \Cref{ap}.

\begin{remark}\label{rem:wisebit} 
Let us link the sequence $\mathfrak d_k$ to two well known integer sequences.
Fixed $k\geq 5$ %and $2^k+2^{k-2}\leq s<2^k+2^{k-1}-4$ 
the sequence $\mathfrak d_k$ coincides with
	\[(2^{k-a(4)}, 2^{k-a(5)}... , 2^{k-a(2^{k-2}-1)})\]
	where $a(2^t)=t+1$ and $a(2^t+i)=1+a(i)$ for $t\geq 0$ and $0<i<2^t$ (see A063787 in the OEIS, the online encyclopedia of integer sequences). Noticed that $a(2^{t_1}+\cdots +2^{t_h})=h+t_h$ for
	$t_1>\cdots >t_h\geq 0$,
one can directly prove that $\mathfrak d_k(2^k+2^{k-2}+2^{t_1}+\cdots 2^{t_h}-4)=2^{k-h-t_h}$.	
	
	Denoted by  $wt(n)$  the Hamming weight of $n$, i.e., the number of 1's in the binary expansion of $n$,
	we have, for $2^k+2^{k-2}\leq s<2^k+2^{k-1}-4$ 
		\[\mathfrak d_k(s)=2^{wt(2^k+2^{k-1}-4-s)+1}.\]
The recurrence relation for $\mathfrak d_k(s)$ permits to compute a recurrence relation for the Hamming weight. Denoted by $w_h$ the Hamming weight of the numbers $\lfloor 1\rfloor_2$, ..., $\lfloor 2^{h+1}-4\rfloor_2$, we have
\[w_2=(1,1,2,1),\quad w_{h+1}=	(w_h,h,h,h+1,1,w_h+1) \quad \forall h\geq 2\]
where $w_h+1$ is the sequence obtained by $w_h$ increasing by one each entrance.	
\end{remark}

\begin{MRF} \label{formula:recursive_Vieru}
%\subsection*{Notation}
For $k\geq 5$ and $2^k\leq s<2^{k+1}$, denote:
\begin{align*}
	(c_1, c_2, c_3,c_4) := &  \; 2^{k-5}(48,32,40,44) \\
	(c_1', c_2', c_3',c_4') := & \; 2^{k-5}(48,40,44,48)\\
	(c_1'', c_2'', c_3'',c_4'') := & \; 2^{k-5}(32,32,48,64)\\
	\mathcal Z_k:=& \; \big(Z(s)\big)_{2^k\leq s< 2^{k+1}}.
\end{align*}

% , i.e.  the $ 2^5 $-tuple $ (Z(s))_s $ for  \mbox{$2^5 \leq s <2^{6}$}:
The initial condition is
\begin{align*}
\mathcal Z_5=(32,48,64,&88,64,80,88,92,64,80,88,104,92,104,108,94, \\
	& 78,88,96,108,96,104,108,110,102,108,112,118,114,118,120,64).
\end{align*}
For $ k \ge 6 $, the $ 2^k $-tuple $ \mathcal Z_k$ coincides with
\[Z(s)=\begin{cases}
    2Z(s-2^{k-1})  & \text{if }2^k\leq s\leq 2^k+2^{k-2}-5\quad (\mathbf A) \\
    Z(s-2^{k-1}-2^{k-3})+c_i & \text{if }s=2^k+2^{k-2}-5+i,\ i=1,2,3,4\quad (\mathbf{B})\\
    2Z(s-2^{k-1})-\mathfrak d_k(s)& \text{if }2^k+2^{k-2}\leq s\leq 2^k+2^{k-1}-5\quad (\mathbf C)\\
    Z(s-2^{k-1}-2^{k-2})+c'_i  & \text{if }s=2^k+2^{k-1}-5+i,\ i=1,2,3,4\quad (\mathbf D)\\
    Z(s-2^k)+2^{k+1}& \text{if }2^k+2^{k-1}\leq s\leq 2^{k+1}-5\quad (\mathbf{E})\\
    Z(s-2^k)+c''_i& \text{if }s=2^{k+1}-5+i,\ i=1,2,3,4\quad (\mathbf F).
    \end{cases}\]
%    \[Z(s)=\begin{cases}
%    2Z(s')  & \text{if }2^k\leq s\leq 2^k+2^{k-2}-2\quad (\mathbf A) \\
%    Z(s'-u_3)+c_i  & \text{if }s=2^k+2^{k-2}-2+i,\ i=1,2,3,4\quad (\mathbf{B})\\
%    2Z(s')-\mathfrak d_k(s)& \text{if }2^k+2^{k-2}+3\leq s\leq 2^k+2^{k-1}-2\quad (\mathbf C)\\
%    Z(s'-u_2)+c'_i  & \text{if }s=2^k+2^{k-1}-2+i,\ i=1,2,3,4\quad (\mathbf D)\\
%    Z(s'-u_1)+2^{k+1}& \text{if }2^k+2^{k-1}+3\leq s\leq 2^{k+1}-2\quad (\mathbf{E})\\
%    2Z(s'-u_1)& \text{if }s=2^{k+1}-1\quad (\mathbf F).
%    \end{cases}\]

%\begin{align*}
%Z(s)&=2Z(s')\qquad 2^k\leq s<2^k+2^{k-2}-1\\
%Z(s)&=
%\end{align*}
%\begin{align*}
%	(&\underbracket[0.1ex]{2 Z(s'), \dots, 2Z(s') }_{2^{k-2}-1}, \underbracket[0.1ex]{Z(s'-u_3) + c_1, \dots, Z(s'-u_3) + c_4}_4 , 
%	\underbracket[0.1ex]{2Z(s') - \mathfrak{d}_k(s), \dots, 2Z(s') - \mathfrak{d}_k(s)}_{2^{k-2}-4}, \\
%	&\underbracket[0.1ex]{Z(s'-u_2) + c'_1, \dots, Z(s'-u_2) + c'_4}_4 , 
%	\underbracket[0.1ex]{Z(s'-u_1) + 2^{k+1}, \dots,Z(s'-u_1)+ 2^{k+1}}_{2^{k-1}-4},\underbracket[0.1ex]{2Z(s'-u_1)}_1 ).
%\end{align*}
\end{MRF}

\textit{Sketch of the Proof}.
The $s$-primitive of the sequence $v= [2,1,2,0,0,1,0,0]$ is
equal to 
\begin{equation}
  \label{eq:summand}
  v^s = 2\bin_{s+4}+3\bin_{s+3}+2\bin_{s+2}+3\bin_{s+1}+2\bin_s \quad \forall s\geq 0.
\end{equation}
%\subsection*{Back to the sequence \(v\)}
%We are ready now to get back to study the sequence:
%\[v = [2,1,2,0,0,1,0,0]\in P_4.\]
%We continue to write $ f^s $ to mean $ \Sigma^s f $ for every $ f \in \per{4} $. So:
%\begin{align*}
%	v=& [2]^4 + [3]^3 + [2]^2 + [3]^1 + [2] \\
%		=& 2[1]^4 + 3[1]^3 + 2[1]^2 + 3[1]^1 + 2[1]
%\end{align*}
%and 
%\begin{equation}
%  \label{eq:summand}
%  v^s = 2[1]^{s+4}+3[1]^{s+3}+2[1]^{s+2}+3[1]^{s+1}+2[1]^s \quad \forall s\geq 0.
%\end{equation}
%%Let us study the number of zeroes in $ v^s $ with $2^k\leq s<2^{k+1}$ with $k\geq 6$. 
In base 2 we have 
\[ 2^k= \lfloor 10_k \rfloor_{2}:= \lfloor 1  	\underbracket[0.1ex]{0\cdots 0}_{k\text{ times}} \, \rfloor_{2},\] therefore
$\lfloor 10_k\rfloor_{2}   \leq [s]_2\leq \lfloor 11_k \rfloor_{2}$.
Set $h=k-5$, we will consider in order the following cases:
\[\mathbf A:\qquad \lfloor 1000_h000\rfloor_{2}\leq s \leq \lfloor 1001_h011\rfloor_{2};\]
\[\mathbf C:\qquad \lfloor 1010_h000\rfloor_{2}\leq  s \leq \lfloor 1011_h011\rfloor_{2};\]
\[\mathbf{E}:\qquad \lfloor 1100_h000\rfloor_{2}\leq  s \leq \lfloor 1111_h011\rfloor_{2};\]
\[\mathbf{B}:\qquad \lfloor 1001_h100\rfloor_{2} \leq  s \leq \lfloor 1001_h111\rfloor_{2};\]
\[\mathbf{D}:\qquad \lfloor 1011_h100\rfloor_{2} \leq  s \leq \lfloor 1011_h111\rfloor_{2};\]
\[\mathbf F:\qquad \lfloor 1111_h100\rfloor_{2} \leq  s \leq \lfloor 1111_h111\rfloor_{2}.\]
%Observe that $\mathbf A \cup \mathbf{E} \cup \mathbf C \cup   \mathbf{B} \cup \mathbf{D} \cup \mathbf F   $ coincides with the whole interval from $ 2^k= \lfloor 10_k \rfloor_{2}$ to $2^{k+1}-1 = \lfloor 11_k \rfloor_{2}$.\\
%
%The idea is to get the number of zeroes in a period of $ v^s$ with $2^{k}\leq s < 2^{k+1}$ as a function of the number of zeroes in $v^{s'} $ with $2^{k-1}\leq s' <2^{k}$.
In the cases \textbf{A}, \textbf{C} and \textbf{E}, the primitive indices of all the summands in \Cref{eq:summand} have the same prefix: 10 in the first two cases, and 11 in the last.
This allows to apply in parallel the recursive lemmas of \Cref{section:recursive_formula}.
The remaining twelve cases require  \textit{ad hoc} analysis.
\appendix

\section{Proofs of \Cref{lemma:wisebit} and \Cref{formula:recursive_Vieru} }\label{ap}
\begin{proof}[Proof of Lemma~\ref{lemma:wisebit}]
Observe that, by Remark~\ref{rem:Egamma}
$$\mathfrak d_k(s)=|E_{s+1}|+|E_{s+3}|-2\times |E_{s+1}\cap E_{s+3}|=
2^{\mathfrak z(s+1)}+2^{\mathfrak z(s+3)}-2\times 2^{\mathfrak z((s+1\mid s+3))}$$ 
since
$ |E_{s+1}\cap E_{s+3}|=2^{\mathfrak z((s+1\mid s+3))}$
(see definition of $E_s$ in the proof of Lemma~\ref{seq:101} which, in our case, reduces
$a_j\geq b_j$).

	If $k=5$, then $s\in\{40,41,42,43\}$. It is easy to verify that
	\[\mathfrak d_5=(2^{\mathfrak z(41)}+2^{\mathfrak z(43)}-2^{\mathfrak z(41\mid 43)+1}, ..., 2^{\mathfrak z(44)}+2^{\mathfrak z(46)}-2^{\mathfrak z(44\mid 46)+1})=(4,8,4,4).\]
	Fixed $k$, the binary representation of the numbers $s$ between  $2^{k}+2^{k-2}$ and $2^{k}+2^{k-1}-4$ are of the following three types
	\begin{itemize}
		\item $\RomanNumeralCaps{1}_k$: $2^{k}+2^{k-2}\leq s<2^{k}+2^{k-2}+2^{k-3}-4$,
		\item $\RomanNumeralCaps{2}_k$: $2^{k}+2^{k-2}+2^{k-3}-4\leq s<2^{k}+2^{k-2}+2^{k-3}$,
		\item $\RomanNumeralCaps{3}_k$: $2^{k}+2^{k-2}+2^{k-3}\leq s< 2^{k}+2^{k-1}-4$.
	\end{itemize}
	
Given $s'\in \RomanNumeralCaps{3}_{k+1}$, hence $s'=2^{k+1}+2^{k-1}+2^{k-2}+t$ with $0\leq t< 2^{k-2}-4$. Set $s=2^{k}+2^{k-2}+t$, we
get
$$(\mathfrak z(s'+1),\;  \mathfrak z(s'+3),\;   \mathfrak z(s'+1\mid s'+3))=(\mathfrak z(s+1),\;  \mathfrak z(s+3),\;   \mathfrak z(s+1\mid s+3)).$$

	Given $s'\in \RomanNumeralCaps{1}_{k+1}$, hence $s'=2^{k+1}+2^{k-1}+t$ with $0\leq t< 2^{k-2}-4$. Set $s=2^{k}+2^{k-2}+t$, we
get
$$(\mathfrak z(s'+1),\;  \mathfrak z(s'+3),\;   \mathfrak z(s'+1\mid s'+3))=(1+\mathfrak z(s+1),\;  1+\mathfrak z(s+3),\;   1+\mathfrak z(s+1\mid s+3)).$$

	Finally the binary representation of $s'$ in the group $II_{k+1}$ is the following:
	\[s':\quad \lfloor 10101_{k+1-5}00 \rfloor_{2}  ,\ \lfloor 10101_{k+1-5}01 \rfloor_{2} , \   \lfloor 10101_{k+1-5}10 \rfloor_{2} , \ \lfloor 10101_{k+1-5}11 \rfloor_{2} .\]
	Therefore $(\mathfrak z(s'+1), \mathfrak z(s'+3), \mathfrak z(s'+1\mid s'+3))$ are 
	\begin{enumerate}
		\item $(3,2,2)$ for $ s' =\lfloor 10101_{k+1-5}00 \rfloor_{2}$,
		\item $(3,k-1,2)$ for $ s' =\lfloor10101_{k+1-5}01\rfloor_{2}$,
		\item $(2,k-2,1)$ for $ s' =\lfloor10101_{k+1-5}10\rfloor_{2}$,
		\item $(k-1,k-2,k-2)$ for $s' =\lfloor10101_{k+1-5}11\rfloor_{2}$.
	\end{enumerate}
	Thus we get the wanted claim.
\end{proof}

\begin{proof}	[Proof of \Cref{formula:recursive_Vieru} ]
Using a generic computer algebra system one can easily compute 
the sequence $\mathcal Z_5$, the initial condition for the recursive formula.  

Cases \textbf{A} and \textbf{C}. 
In both the cases $s$, $s+1$, $s+2$, $s+3$, and $s+4$ have a binary representation $\lfloor 10b_{k-2}...b_0\rfloor$ with the two most representative figures equal to 10. If $ f \in \per{4} $, we denote shortly
	\begin{align*}
		\Double f^s := \Double(f^s,2^{k-1}) ,\quad
		\Alt f^s := \Alt(f^s,2^{k-1}).  
	\end{align*}

Using Lemma~\ref{seq:101},  we lead back the study of $\bin_s$, ..., $\bin_{s+4}$ to the study of  $\bin_{s'}$, ..., $\bin_{s'+4}$ where $ s' = s-(2^k-2^{k-1})=s-2^{k-1} $. It is
\begin{align*}
	v^s & = 2 \bin_{s+4} +3 \bin_{s+3} +2 \bin_{s+2} +3 \bin_{s+1} +2 \bin_{s} \\
	& \peq  2 \left( \Double \bin_{s'+4} + 2\chi_{E_{s+4}} \right) +3 \left( \Double \bin_{s'+3} + 2\chi_{E_{s+3}} \right) + 2\left( \Double \bin_{s'+2} + 2\chi_{E_{s+2}} \right) + \\
	& + 3 \left( \Double \bin_{s'+1} + 2\chi_{E_{s+1}} \right)  + 2 \left( \Double \bin_{s'} + 2\chi_{E_{s}} \right)\\
	& \peq  \Double v^{s'} + 3 \cdot 2\chi_{E_{s+1}} + 3\cdot 2\chi_{E_{s+3}} \\
	& \peq  \Double v^{s'} + 2\chi_{E_{s+1}\triangle E_{s+3}}.
\end{align*}
%where the last equivalence holds since $ u_i(n) \in 2 \Z_4 $ for every $ i,n $.
In case \textbf{A} it is $E_{s+1}=\emptyset=E_{s+3}$ and hence $v^s\peq \Double v^{s'}$. Therefore 
\[Z(v^{s})=Z\left(\Double v^{s'}\right)=2\times Z(v^{s'}).\]
In case \textbf{C}, if $n\in E_{s+1}\triangle E_{s+3}$, then $\Double v^{s'}(n)$ is equal to zero.
Indeed it is easy to check that $n = \lfloor a_{k+1} \dots a_{0} \rfloor_{2} \in E_{s+1}\triangle E_{s+3}$ implies $a_{k} = 1$, $a_{k-1} = 1$ and $a_{k-2} = 0$.
Since the binary representation of $t\in\{s, s+1, s+2, s+3,s+4\}$ is $\lfloor 1 0 1 b_{k-3} \dots b_{0} \rfloor_{2}$, using Kummer's Theorem one has for $t'=t-2^{k-1}$:
\begin{align*}
\Double \bin_{t'}(n) = \bin_{t'}(n') = \binom{\lfloor a_{k+1} 1 0 a_{k-3} \dots a_{0} \rfloor_{2}}{ {\phantom{AAi,}}\lfloor 1 1 b_{k-3} \dots b_{0} \rfloor_{2}} = 0,
\end{align*}
hence $\Double v^{s'}(n)=0$.
Therefore, %by Remark~\ref{rem:wisebit}
we have
\[Z(v^{s})=Z\left(\Double v^{s'}\right)-\Pi_1(2\chi_{E_{s+1}\triangle E_{s+3}})=2\times Z(v^{s'})-\mathfrak d_k(s).\]

\medskip

Case \textbf{E}.
The numbers $s$, $s+1$, $s+2$, $s+3$, and $s+4$ have a binary representation $\lfloor 11b_{k-2}...b_0\rfloor$ with the two most representative figures equal to 11. If $ f \in \per{4} $, we denote shortly
	\begin{align*}
		\Double f^s := \Double(f^s,2^{k-1}) ,\quad
		\Alt f^s := \Alt(f^s,2^{k-1}).  
	\end{align*}
Using Lemma~\ref{seq:11},  we lead back the study of $\bin_s$, ..., $\bin_{s+4}$ to the study of  $\bin_{s'}$, ..., $\bin_{s'+4}$ where $ s' = s-2^{k} $. Thanks to the linearity of $ \Alt $ we have:
\begin{align*}
	v^s & = 2 \bin_{s+4} +3 \bin_{s+3} +2 \bin_{s+2} +3 \bin_{s+1} +2 \bin_{s} \\
	& \peq  2\Alt \bin_{s'+4} +3 \Alt \bin_{s'+3} + 2\Alt \bin_{s'+2} + 3\Alt\bin_{s'+1} + 2\Alt \bin_{s'} \\
	& \peq  \Alt \left( 2 \bin_{s'+4} +3 \bin_{s'+3} +2 \bin_{s'+2} +	3 \bin_{s'+1} +2 \bin_{s'} \right) \\
	& \peq  \Alt v^{s'}.
\end{align*}

Therefore
\[Z(v^s)=Z\left(\Alt v^{s'} \right)=Z(v^{s'})+2^{k+1}.\]

Case \textbf{B} and \textbf{D}. 
The number $s$ has a binary representation $\lfloor 10b_{k-2}1_{h}1b_1b_0\rfloor$ with $b_0,b_1, b_{k-2}\in\{0,1\}$. If $ f \in \per{4} $, we denote shortly
	\begin{align*}
		\Double f^s := \Double(f^s,2^{k-4}) ,\quad
		\Alt f^s := \Alt(f^s,2^{k-4}).  
	\end{align*}
B. Using \Cref{seq:11} with $m=2$ and \Cref{seq:000} with $m=3$,  we lead back the study of $\bin_s$, ..., $\bin_{s+4}$ to the study of  $\bin_{s'}$, ..., $\bin_{s'+4}$ where $ s' = s-2^{k-1}-2^{k-3} $ in case \textbf{B}, and $ s' = s-2^{k-1}-2^{k-2} $  in case \textbf{D}.
\begin{itemize}
	\item If $(b_1b_0)=(00)$, then we have
\end{itemize}	
	\[ s+1 = \lfloor 10b_{k-2}1_{h}101\rfloor_{2},\ s+2 = \lfloor 10b_{k-2}1_{h}110 \rfloor_{2},\ s+3 = \lfloor 10b_{k-2}1_{h}111\rfloor_{2},\]
and $	s+4 = \lfloor 1b'_{k-1}b'_{k-2}0_{h}000\rfloor_{2}$ with $b'_{k-1}b'_{k-2}=01$ in case \textbf{B} and $b'_{k-1}b'_{k-2}=10$ in case \textbf{D}.
By \Cref{seq:11} with $m=2$ for $s+i$, $i=0,1,2,3$ and \Cref{seq:000} with $m=3$ for $s+4$ we have 
\[\bin_{s+i}\peq \Alt \bin_{s'+i},\ i=0,1,2,3,\text{ and }\bin_{s+4} \peq \Double \bin_{s'+4}.\]
Then
	\[v^s \peq 2\Double\bin_{s'+4} +3 \Alt \bin_{s'+3} + 2 \Alt \bin_{s'+2} + 3 \Alt \bin_{s'+1} + 2 \Alt \bin_{s'}. \]
Analysing the previous equation in blocks of length $ 2^{k-4} $, one obtains:
	\[Z(v^s) = Z(v^{s'})+ Z\left(2 \bin_{s'+4}\right).\]
Since $s'+4=\lfloor 1b'_{k-1}b'_{k-2}0_{h-1}000\rfloor_2$, applying $h$-times \Cref{seq:000}, we get
\begin{align*}
Z\left(2 \bin_{s'+4}\right)&=Z\left( \bin_{s'+4}\right)+\Pi_1\left(\bin_{s'+4}\right) \\
&=
\begin{cases}
  2^{h}\Big(Z\left( \bin_{20}\right)+\Pi_1\left( \bin_{20}\right)\Big) =48\cdot 2^{k-5}   & \text{in case \textbf{B} }, \\
  2^{h-1}\Big(Z\left( \bin_{24}\right)+\Pi_1\left( \bin_{24}\right)\Big)= 48\cdot 2^{k-5}    & \text{in case \textbf{D} }
\end{cases}
%&=96\cdot 2^h=96\cdot 2^{k-5}.
\end{align*}
Therefore  in both the cases \textbf{B} and \textbf{D} we have $Z(v^s)=Z(v^{s'})+2^{k-5}\times 48$.
\begin{itemize}
	\item If $(b_1b_0)=(01)$, then we have $Z(v^s)=\begin{cases}
 Z(v^{s'})+2^{k-5}\times 32     & \text{in case \textbf{B}}, \\
   Z(v^{s'})+2^{k-5}\times 40     & \text{in case \textbf{D}}.
\end{cases}$
\end{itemize}
\begin{itemize}
	\item If $(b_1b_0)=(10)$, then we have $Z(v^s)=\begin{cases}
 Z(v^{s'})+2^{k-5}\times 40     & \text{in case \textbf{B}}, \\
   Z(v^{s'})+2^{k-5}\times 44     & \text{in case \textbf{D}}.
\end{cases}$
\end{itemize}
\begin{itemize}
	\item If $(b_1b_0)=(11)$, then we have $Z(v^s)=\begin{cases}
 Z(v^{s'})+2^{k-5}\times 44     & \text{in case \textbf{B}}, \\
   Z(v^{s'})+2^{k-5}\times 48     & \text{in case \textbf{D}}.
\end{cases}$
\end{itemize}

\medskip
Case \textbf{F}. 
The number $s$ has a binary representation $\lfloor 1111_{h}1b_1b_0\rfloor$ with $b_0,b_1\in\{0,1\}$. If $ f \in \per{4} $, and $ 2^k\leq t<2^{k+1} $we denote shortly
	\begin{align*}
		\Double f^t := \Double(f^t,2^{k-2}) ,\quad
		\Alt f^t := \Alt(f^t,2^{k-1}).  
	\end{align*}
We lead back the study of $\bin_s$, ..., $\bin_{s+4}$ to the study of  $\bin_{s'}$, ..., $\bin_{s'+4}$ where $ s' = s-2^{k} $
\begin{itemize}
	\item If $(b_1b_0)=(00)$, then we have
\end{itemize}
\[ s+1 = \lfloor 1111_{h}101 \rfloor_{2},\ s+2 = \lfloor 1111_{h}110\rfloor_{2},\
	s+3 = \lfloor 1111_{h}111\rfloor_{2},\ s+4 = \lfloor 10000_{h}000\rfloor_{2}.\]
	For  $0 \leq i \leq 3$ the sequence $\bin_{s+i}$ has period $2^{k+2}$, while $\bin_{s+4}$ has period $2^{k+3}$. Nevertheless, the period of
	\[v^s = 2 \bin_{s+4} + 3 \bin_{s+3} + 2  \bin_{s+2} + 3 \bin_{s+1} + 2 \bin_{s}\]
	is $2^{k+2}$: indeed the sequence $ 2 \bin_{s+4} $ has period $2^{k+2}$ by \Cref{prop:periodconstants}. By \Cref{seq:11} with $m=-1$, \Cref{seq:000} with $m=1$, and \Cref{rem:s'} we have
	\[
	v^s= 2 \Double \bin_{s'+4} + 3  \Alt \bin_{s'+3} + 2 \Alt \bin_{s'+2} + 3 \Alt \bin_{s'+1} +2  \Alt\bin_{s'}
	\]
where $s'=s-2^k$. Then one gets
	\[Z(v^s)=Z(v^{s'})+Z(2  \bin_{s'+4}).\]
	Notice that $Z(2 \bin_{s'+4})=\frac 12\big(Z\left(\bin_{s'+4}\right)+\Pi_1\left(\bin_{s'+4}\right)\big)$. Indeed $2  \bin_{s'+4}$ has period equal to one half of the period of $  \bin_{s'+4}$ and the $0s$ of 	$2  \bin_{s'+4}$ correspond to the $0s$ and $2s$ of $ \bin_{s'+4}$.
Applying $h$-times \Cref{seq:000} with $m=1$, we get
\[Z\left(\bin_{s'+4}\right)+\Pi_1\left(\bin_{s'+4}\right) =2^{h}\big(Z(\bin_{32})+\Pi_1(\bin_{32})\big)=2^{k-5}\cdot 64.\]
Hence $Z(v^s)=Z(v^{s'})+2^{k-5}\cdot 32$.
\begin{itemize}
 \item If $(b_1b_0)=(01)$, we have 
\end{itemize}
 \[ s+1 =  \lfloor 1111_{h}110 \rfloor_{2},\
	s+2 \lfloor 1111_{h}111 \rfloor_{2},\ s+3 = \lfloor 10000_{h}000 \rfloor_{2},\ \ s+4 = \lfloor 10000_{h}001 \rfloor_{2}.\]	
	By \Cref{seq:11} with $m=-1$, \Cref{seq:000} with $m=1$, and \Cref{rem:s'} we have
	\[v^s = 2 \Double\bin_{s'+4}+3\Double\bin_{s'+3}+ 2 \Alt\bin_{s'+2}+ 3 \Alt \bin_{s'+1} + 2  \Alt \bin_{s'} .\]
	Observe that $3\Double\bin_{s'+3}$ has period $2^{k+3}$, while $2 \Double\bin_{s'+4}$, $\Alt \bin_{s'+i}$, $i=0,1,2$, have period $2^{k+2}$. We have that
	\[Z(v^s)=Z(v^{s'})+Z(2\bin_{s'+4} + 3\bin_{s'+3}).\]
Applying  $h$ times \Cref{seq:000} with $m=1$, we get
\[
Z(2\bin_{s'+4} + 3\bin_{s'+3})=Z(2R^h\bin_{33}+3R^h\bin_{32})=2^hZ(2\bin_{33}+3\bin_{32})
=2^{k-5}\cdot 32.
\]
Therefore 	$Z(v^s)=Z(v^{s'})+2^{k-5}\cdot 32$.
\begin{itemize}
	\item If $(b_1b_0)=(10)$, then we have $Z(v^s)=Z(v^{s'})+2^{k-5}\cdot 48$.
	\item If $(b_1b_0)=(11)$, then we have $Z(v^s)=Z(v^{s'})+2^{k-5}\cdot 64$.
\end{itemize}
%
%
%===============================
%\begin{itemize}
%	\item If $(b_1b_0)=(01)$, we have \[ s+1 =  \lfloor 1111_{h-1}10 \rfloor_{2},\
%	s+2 \lfloor 1111_{h-1}11 \rfloor_{2},\ s+3 = \lfloor 10000_{h-1}00 \rfloor_{2},\ \ s+4 = \lfloor 10000_{h-1}01 \rfloor_{2}.\]
%
%	Then
%	\[v^s = 2 \Double[1]^{s'+4}+3\Double[1]^{s'+3}+ 2 \Alt[1]^{s'+2}+ 3 \Alt [1]^{s'+1} + 2  \Alt [1]^{s'} .\]
%	Thus we have that
%	\[Z(v^s)=Z(v^{s'})+Z(2[1]^{s'+4} + 3[1]^{s'+3}).\]
%	Now if $k=6$, then $[s']_2=111101=[61]_2$, and
%	\[ s'+3 = \lfloor 1000000 \rfloor_{2} = 64 \quad s'+4 = \lfloor 1000001 \rfloor_{2} = 65.\]
%	Thus
%	\[Z(2[1]^{s'+4}+3[1]^{s'+3})=64.\]
%	If $k>6$ then
%	\[Z(2[1]^{s'+4}+3[1]^{s'+3})=2^{k-6}\times 64.\]
%	Therefore \[Z(v^s)=Z(v^{s'})+2^{k-6}\times 64.\]
%
%	\item If $(b_1b_0)=(10)$ we get
%	\[Z(v^{s})=Z(v^{s'})+Z(2 [1]^{s'+4}+3 [1]^{s'+3} + 2
%	[1]^{s'+2})\]
%	and
%	\[Z(v^s)=Z(v^{s'})+2^{k-6}\times 96.\]
%
%	\item If $(b_1b_0)=(11)$ we get
%	\[Z(v^{s})=Z(v^{s'})+Z(2 [1]^{s'+4} + 3 [1]^{s'+3} + 2 [1]^{s'+2} + 3 [1]^{s'+1} )\]
%	and
%	\[Z(v^{s})=Z(v^{s'})+2^{k-6}\times 128.\]
%\end{itemize}
%

	\end{proof}

\end{document}